\newtheorem{theorem}{Theorem}[section]
\newtheorem{proposition}{Proposition}[section]
\newtheorem{definition}{Definition}[section]
\newtheorem{lemma}{Lemma}[section]
\newtheorem{corollary}{Corollary}[section]
\newtheorem{example}{Example}[section]
\newcommand{\proofbox}{\hspace{\fill}{$\Box$}}
\newenvironment{proof}{\textbf{Proof}.}{\proofbox}
\def\bar{\overline}
\date{}
\numberwithin{equation}{section}
\pgfplotsset{compat=1.17}
\definecolor{mycolor1}{rgb}{0,0.4470,0.7410}
\begin{document}

\title{On space-time FEM for time-optimal control problems governed by parabolic equations with mixed  and endpoints constraints} 
	
\author{H. Khanh\footnote{Department of Mathematics, FPT University, Hoa Lac Hi-Tech Park, Hanoi, Vietnam; email: khanhh@fe.edu.vn}  \   and      B.T. Kien\footnote{Department of Optimization and Scientific Computing, Institute of Mathematics, Vietnam Academy of Science and Technology,  18 Hoang Quoc Viet road, Hanoi, Vietnam; email: btkien@math.ac.vn}}
	
\maketitle
	
\medskip

\noindent {\bf Abstract.} {\small In this paper,  we consider a class of time-optimal control problems governed by linear parabolic equations with mixed control-state constraints and end-point constraints, and without Tikhonov regularization term in the objective function.   By the finite element method, we discretize the optimal control problem  to obtain a sequence of mathematical programming problems in finite-dimensional spaces. Under certain conditions, we show that the optimal solutions of the discrete problems converge to an optimal solution of the original problem. Besides, we show that if the second-order sufficient condition is satisfied, then some error estimates of approximate solutions are obtained.

\medskip

\noindent {\bf  Key words.} Time optimal control, parabolic control, mixed pointwise constraints, final point constraints, Galerkin finite element method, convergence analysis, error estimates.
	
\noindent {\bf AMS Subject Classifications.} 49K20, 35K58, 49M25, 65M15, 65M60

\section{Introduction}
	
Let $\Omega$ be a bounded domain in $\mathbb{R}^N$ with $N=2$ or $3$ and its  boundary $\Gamma=\partial \Omega$ is of class $C^2$. Let $D=H^2(\Omega)\cap H_0^1(\Omega)$, $V=H_0^1(\Omega)$,  $H=L^2(\Omega)$ and $Q_T:=\Omega\times (0, T)$, $\Sigma_T := \Gamma\times [0, T]$. We consider  the problem of finding $T>0$, a control function $w  \in L^\infty(Q_T)$  and the corresponding state $y\in C(\bar Q_T)\cap W^{1, 1}_2(0, T; D, H)$ which solve
\begin{align}
    &J(T,y, w) :=   T     \to  \min   \label{P1}\\
    &{\rm s.t.}\notag\\
 (P) \quad  \quad  &\frac{\partial y}{\partial t}  -   \Delta y   = w \quad \text{in} \ Q_T, \quad y(x, t)=0 \quad \text{on}  \ \Sigma_T,    \label{P2} \\
    &y(0)=y_0  \quad \text{in} \ \Omega,      \label{P3}\\
    &H(y(T))  \le  0,  \label{P4} \\
    &a \le w(x, t) + \alpha y(x, t) \le b \quad \text{a.a.} \ (x, t)\in Q_T,  \label{P5}
\end{align}
 where $a, b, \alpha \in \mathbb R$ with $a < b$, $\alpha \ge 0$;  $y_0\in H^1_0(\Omega)\cap C(\bar\Omega)$  
 and the terminal constraint function $H$ is defined as 
 \begin{align}
     H: L^2(\Omega) \to \mathbb R, \quad   \zeta \mapsto H(\zeta) := \frac{1}{2}\int_\Omega (\zeta(x) - y_\Omega(x))^2 dx  -   \frac{\lambda^2}{2}, 
 \end{align}
 with $y_\Omega \in H_0^1(\Omega)$ and $\lambda > 0$ are given problem data.   Throughout of the paper, we denote by    $\Phi$ the  feasible set of problem $(P)$, that is, $\Phi$ consists of triples $(T, y, v) \in (0, +\infty)  \times \big(C(\bar Q_T)\cap W^{1, 1}_2(0, T; D, H)\big) \times L^\infty(Q_T)$ which satisfies conditions \eqref{P2}-\eqref{P5}. The objective of the problem $(P)$ is to steer the heat equation from an initial state $y_0$ into a ball of radius $\lambda$, centered at the desired state $y_\Omega$, in the shortest possible time $T > 0$, while ensuring that the mixed pointwise constraints (\ref{P5}) are satisfied.

 The class of time-optimal control problems is important for modern technology and has been studying  by some authors recently. For the papers which have a close connection with the present work, we refer the readers to \cite{Arada-2003}, \cite{Bon}, \cite{Bon1}, \cite{Bon2}  \cite{KKR-2024-3} and information therein.   Among these papers, L. Bonifacius et al. \cite{Bon} gave a priori estimates for space time-time finite element discretization of parabolic time-optimal problems with Tikhonov regularization term in the objective function, with end-point constraint and control constraints. Meanwhile  L. Bonifacius et al. \cite{Bon1} gave a priori estimates for space time finite element discretization of parabolic time-optimal problems with  bang-bang controls with end-point constraint and control constraints. Particularly,  L. Bonifacius and K. Kunish \cite{Bon2} established an equivalence of time-optimal control problems and distance-optimal control problems  for a class of parabolic control systems. Based on this equivalence, they gave an efficient algorithm to find 
solution of time-optimal control problems. 

 In this paper we consider problem $(P)$ which is  governed by linear parabolic equations with mixed control-state constraints and end-point constraints, and without Tikhonov regularization term in the objective function. The aim of this paper is to prove the convergence and give some error estimates of approximate solution to  discrete parabolic time-optimal problems which are discretized by the space-time finite element  method. To obtain the goal, we first transform $(P)$ into problem $(P')$ with pure control constaint as follows: 
 we define a control variable
 \begin{align}
     u  = w + \alpha y 
 \end{align}
 and rewrite  $(P)$ in the form of time-optimal control problem  with pure control constraint 
 \begin{align}
    &J(T,y, u) =  T      \to  \inf   \label{P1.0}\\
    &{\rm s.t.}\notag\\
 (P') \quad \quad  \quad   &\frac{\partial y}{\partial t}  -   \Delta y    +  \alpha y   = u \quad \text{in} \ Q_T, \quad y(x, t)=0 \quad \text{on}  \ \Sigma_T,    \label{P2.0} \\
    &y(0)=y_0  \quad \text{in} \ \Omega,      \label{P3.0}\\
    &H(y(T))  \le  0,  \label{P4.0} \\
    &a \le u(x, t)\le b \quad \text{a.a.} \ (x, t)\in Q_T.  \label{P5.0}
\end{align}
Denote $\Phi'$ by the  feasible set of $(P')$. The second step is to transform $(P')$ into a problem $(P'')$ with fixed-end time. We then establish the existence of optimal solution and optimality conditions for locally optimal solutions to  $(P'')$. The third step is to discretize $(P'')$  by the  space-time finite element  method to obtain a sequence of optimization problems $(P''_\sigma)$ in finite-dimensional spaces. We then show that  under certain conditions of the original problem, the optimal solutions of the discrete problems $(P''_\sigma)$ converge to an optimal solution of the original problem and that  if the second-order sufficient condition is satisfied, then some error estimates of approximate solutions are obtained.

It is noted that there is very little literature on second-order sufficient optimality conditions for parabolic time-optimal control problems where the Tikhonov term appears in the functional $J$ (see \cite{Bon}, \cite{KKR-2024-3}). And the method of proof used in these cases does not work for problems with objective functional depending only on $T$.  Another issue that has been interested in studying of  researchers recently  is to derive second- order optimality conditions for locally  optimal solutions instead of locally strong optimal solution (see Definition \ref{locally-strongly-optimal-solutions}-(ii)). In this paper, we present a second-order condition for locally  optimal solutions of problem $(P'')$ based on the usual extended critical cone. This makes a difference with the results in this direction proved in \cite{Bon1}, where a nodal set condition was required.

The remainder of the paper is organized as follows. In Section 2, we establish some related results on the existence  of optimal  solutions to problem $(P'')$ and necessary and sufficient  optimality conditions for locally optimal solution to $(P'')$.  In Section 3, we present  main results on the convergence and error estimate of approximate solutions to discrete optimization problem $(P''_\sigma)$. In order to illustrate the obtained results, a numerical example is given in Section 3. 
 
\section{Some related results for continuous problem}

\subsection{State equation}

In the sequel, we assume that $\partial\Omega$ is of class $C^2$. Let  $H:=L^2(\Omega)$, $V:=H_0^1(\Omega)$ and $D:= H^2(\Omega)\cap H_0^1(\Omega)$. The norm and the scalar product in $H$ are denoted by $|\cdot|$ and $(\cdot, \cdot)_H$,  respectively.   It is known that 
$$
D\hookrightarrow\ \hookrightarrow V\hookrightarrow\ \hookrightarrow H
$$ and each space is dense in the following one. Hereafter,  $\hookrightarrow\ \hookrightarrow$ denotes a compact embedding. 

\noindent $W^{m,p}(\Omega)$ for $m$ integer and $p\geq 1$ is a Banach space of elements in $ v\in L^p(\Omega)$ such that their generalized derivatives $D^\alpha v\in L^p(\Omega)$ for $|\alpha|\leq m$. The norm of a element  $v\in W^{m,p}(\Omega)$ is given by 
$$
\|u\|_{m,p}=\sum_{|\alpha|\leq m}\|D^\alpha u\|_p,
$$ where $\|\cdot\|_p$ is the norm in $L^p(\Omega)$. The closure of $C_0^\infty(\Omega)$ in  $W^{m,p}(\Omega)$ is denoted by $W_0^{m, p}(\Omega)$.   When $p=2$, we write $H^m(\Omega)$ and $H^m_0(\Omega)$ for $W^{m, 2}(\Omega)$ and $W^{m, 2}_0(\Omega)$, respectively. 

\noindent $W^{s, p}(\Omega)$ for real number $s\geq 0$ consists of elements  $v$ so that 
$$
\|v\|_{s,p}=\Big[\|v\|^p_{m,p} +\sum_{|\alpha|=m}\int_\Omega\int_\Omega \frac{|D^\alpha v(x)-D^\alpha v(x')|^p}{|x-x'|^{N+\sigma p}} dxdx' \Big]^{1/p}< +\infty,
$$ where $s=m+\sigma$ with $m=[s]$ and $\sigma\in (0, 1)$. It is known that 
\begin{align}
  W^{s,p}(\Omega)=\big(W^{k_0, p}(\Omega), W^{k_1, p}(\Omega)\big)_{\theta, 1},  
\end{align} where $s=(1-\theta)k_0 +\theta k_1$, $0<\theta <1$, $k_0<k_1$ and $1\leq p\leq \infty$. Here $\big(W^{k_0, p}(\Omega), W^{k_1, p}(\Omega)\big)_{\theta, 1}$ is a interpolation space in $K-$method. 

\noindent $W^{2l, l}_p(Q_T)$ for $l$ integer and $p\geq 1$ is a Banach space of elements in $ v\in L^p(Q_T)$ such that their generalized derivatives $D^r_t D^s_x v\in L^p(\Omega)$ with $2r+s\leq 2l$. The norm of a element  $v\in W^{2l,l}_p(\Omega)$ is given by 
\begin{align*}
    \big\| v\big\|_{p, Q_T}^{(2l)}=\sum_{j=0}^{2l} \langle\langle v\rangle\rangle^{(j)}_{p, Q_T},    \quad {\rm with} \ \langle\langle v\rangle\rangle^{(j)}_{p, Q_T}  = \sum_{2l+s=j}\|D^r_t D^s_x v\|_{L^p(Q_T)}.  
\end{align*}   

Given $\alpha\in (0, 1)$ and $T>0$, we denote by $C^{0, \alpha}(\Omega)$ and $C(\bar Q_T)$ the space of H\"{o}lder continuous functions on $\Omega$ and the space of continuous functions on $\bar Q_T$, respectively.    Let $H^{-1}(\Omega)$ be the dual of $H_0^1(\Omega)$. We define the following function spaces
\begin{align*}
& H^1(Q_T)=W^{1,1}_2(Q_T)=\{y\in L^2(Q_T):  \frac{\partial y}{\partial x_i}, \frac{\partial y}{\partial t}\in L^2(Q_T)\},\\
&V_2(Q_T)=L^\infty(0, T; H)\cap L^2(0, T; V),\\
&W(0, T)=\{y\in L^2(0, T; H^1_0(\Omega)): y_t\in L^2(0, T; H^{-1}(\Omega))\},\\
& W^{1,1}_2(0, T; V, H)=\{y\in L^2([0, T], V): \frac{\partial y}{\partial t}\in L^2([0, T], H) \},\\
& W^{1,1}_2(0, T; D, H)=\{y\in L^2([0, T], D): \frac{\partial y}{\partial t}\in L^2([0, T], H) \},\\
 &U_T=L^\infty(Q_T),\quad  Y_T =\big\{y\in  W^{1,1}_2(0, T; D, H) \cap  W^{2,1}_p(Q_T)| \Delta y\in L^p(Q_T)\big\}.
\end{align*} Hereafter, we assume that 
\begin{align} \label{Dimension1}
    1+\frac{N}{2}<p< N+2.
\end{align} Then $Y_T$ is a Banach space under the graph norm
\begin{align*}
    \|y\|_{Y_T}:= \|y\|_{W^{1, 1}_2(0, T; D, H)} + \|y\|_{W^{2,1}_p(Q_T)} +\|\Delta y\|_{L^p(Q_T)}. 
\end{align*} Note that if $y\in W^{2,1}_p(Q_T)$, then $y_t\in L^p(0, T; L^p(\Omega))$. By  the proof of Lemma \ref{Lemma-stateEq}, we have
\begin{align}\label{keyEmbed1}
  W^{2,1}_p(Q_T)\hookrightarrow\  \hookrightarrow C(\bar Q_T),  \quad  Y_T \hookrightarrow\ \hookrightarrow  C(\bar Q_T).
\end{align} 
Also, we have the following continuous embeddings: 
\begin{align}\label{keyEmbed3}
W^{1, 1}_2(0, T; D, H)\hookrightarrow C([0, T], V),\quad W(0, T)\hookrightarrow C([0, T], H),
\end{align} where $C([0, T], X)$ stands for the space of continuous mappings $v: [0, 1]\to X$ with $X$ is a Banach space. 

Given  $y_0\in H$ and $u\in L^2(0, T; H)$,  a function $y\in W(0, T)$ is said to be a weak solution of the semilinear parabolic equation \eqref{P2.0}-\eqref{P3.0} if 
\begin{align}\label{WeakSol}
\begin{cases}
\langle y_t, v\rangle + \sum_{i,j=1}^n \displaystyle \int_\Omega D_iy D_j v dx +(\alpha y, v)_H =(u, v)_H\quad \forall v\in H_0^1(\Omega)\quad \text{a.a.}\quad t\in [0, T]\\
y(0)=y_0.
\end{cases}
\end{align} 
If a  weak solution $y$ such that $y\in W^{1,1}_2(0, T; D, H)$ then we say $y$ is a {\it strong solution} of \eqref{P2.0}-\eqref{P3.0}. From now on a  solution to \eqref{P2.0}-\eqref{P3.0} is understood a strong solution.  

Let us make the following assumption which is related to the state equation. 

\begin{enumerate}

\item[$(H1)$] $y_0\in H_0^1(\Omega)\cap W^{2-\frac{2}p, p}(\Omega)$, where $p>1$ satisfying \eqref{Dimension1}. 

\end{enumerate}

Since $(2-\frac{2}p)-\frac{N}p\in(0, 1)$, Theorem 1.4.3.1 and Theorem 1.4.4.1 in \cite{Grisvard} imply that $W^{2-\frac{2}p, p}(\Omega)\hookrightarrow C^{0, \alpha}(\bar \Omega)$ with $\alpha := (2-\frac{2}p)-\frac{N}p.$

 The proof of the next result can be found in \cite[Lemma 2.2]{KKR-2024-3} and \cite[Theorem 2.1]{Casas-2020}.
 
\begin{lemma}\label{Lemma-stateEq} Suppose that $(H1)$ is valid. Then for each $T>0$ and  $u\in L^p(Q_T)$ with $p$ satisfying \eqref{Dimension1},  the state equation \eqref{P2.0}-\eqref{P3.0} has a unique  solution   $y\in Y_T$ and there exists positive constant $C > 0$ such that 
\begin{align}\label{KeyInq0}
    \|y\|_{C(\bar Q_T)}  \le C (\|u\|_{L^p(Q_T)} + \|y_0\|_{L^\infty(\Omega)}).
\end{align} 
Moreover, if  $u_n \rightharpoonup u$ weakly in $L^p(Q_T)$ then $y_n  \to y$ in $L^\infty(Q_T)$ and $y_n(\cdot, T)  \to y(\cdot, T)$ in $L^\infty(\Omega)$, where $y_n = y(T, u_n)$ and $y = y(T, u)$.  
\end{lemma}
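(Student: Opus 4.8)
The plan is to derive the statement from $L^p$ maximal parabolic regularity, the parabolic Sobolev embedding, and a maximum-principle estimate, and then to obtain the convergence assertion by a compactness-plus-uniqueness argument.

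First I would settle existence, uniqueness and regularity. The operator $-\Delta + \alpha I$ with homogeneous Dirichlet condition generates an analytic semigroup on $L^p(\Omega)$ enjoying $L^p$ maximal parabolic regularity. By $(H1)$ the datum $y_0$ belongs to $W^{2-2/p,p}(\Omega)\cap H_0^1(\Omega)$, which is precisely the trace space making the initial and boundary conditions compatible for $W^{2,1}_p(Q_T)$; hence the maximal regularity theorem gives a unique $y\in W^{2,1}_p(Q_T)$ solving \eqref{P2.0}--\eqref{P3.0}, together with
\begin{align*}
\|y\|_{W^{2,1}_p(Q_T)}\le C\big(\|u\|_{L^p(Q_T)}+\|y_0\|_{W^{2-2/p,p}(\Omega)}\big),
\end{align*}
so in particular $\Delta y\in L^p(Q_T)$. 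Since $p>1+\frac N2\ge 2$ and $Q_T$ has finite measure, $u\in L^2(0,T;H)$ while $y_0\in V$, and the standard $L^2$ theory for this equation yields $y\in L^2(0,T;D)$ with $y_t\in L^2(0,T;H)$, i.e. $y\in W^{1,1}_2(0,T;D,H)$. Thus $y\in Y_T$. For the compact embeddings \eqref{keyEmbed1}, the hypothesis $p>1+\frac N2=\frac{N+2}2$ is equivalent to $2-\frac{N+2}{p}>0$; the anisotropic parabolic Sobolev embedding then gives $W^{2,1}_p(Q_T)\hookrightarrow C^{\gamma,\gamma/2}(\bar Q_T)$ for some $\gamma>0$, and the parabolic H\"older space embeds compactly into $C(\bar Q_T)$, whence $W^{2,1}_p(Q_T)\hookrightarrow\hookrightarrow C(\bar Q_T)$ and $Y_T\hookrightarrow\hookrightarrow C(\bar Q_T)$.

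To obtain the sharper bound \eqref{KeyInq0}, where $\|y_0\|_{L^\infty(\Omega)}$ replaces $\|y_0\|_{W^{2-2/p,p}(\Omega)}$, I would split $y=y_1+y_2$: let $y_1$ solve the equation with source $u$ and $y_1(0)=0$, and $y_2$ solve the homogeneous equation $\partial_t y_2-\Delta y_2+\alpha y_2=0$ with $y_2(0)=y_0$. For $y_1$ the estimate above and the embedding give $\|y_1\|_{C(\bar Q_T)}\le C\|y_1\|_{W^{2,1}_p(Q_T)}\le C\|u\|_{L^p(Q_T)}$. For $y_2$, since $y_0\in C^{0,\alpha}(\bar\Omega)$ vanishes on $\Gamma$, the solution is continuous on $\bar Q_T$; because $\alpha\ge 0$ and the lateral data vanish, the parabolic maximum principle yields $\|y_2\|_{C(\bar Q_T)}\le\|y_0\|_{L^\infty(\Omega)}$. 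Adding the two bounds gives \eqref{KeyInq0}. For the convergence claim, $u_n\rightharpoonup u$ in $L^p(Q_T)$ keeps $\{u_n\}$ bounded, so by maximal regularity $\{y_n\}$ is bounded in $Y_T$, and the compact embedding extracts a subsequence $y_{n_k}\to\tilde y$ in $C(\bar Q_T)$. Passing to the limit in the linear weak formulation \eqref{WeakSol} identifies $\tilde y$ with the solution for the source $u$: the zeroth-order term passes by the strong $C(\bar Q_T)$ convergence, the gradient and time-derivative terms by weak convergence in $L^2$, and $(u_{n_k},v)_H$ by $u_{n_k}\rightharpoonup u$, while $\tilde y(0)=y_0$ follows from the uniform convergence; by uniqueness $\tilde y=y$. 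As the limit is independent of the subsequence, the whole sequence converges, $y_n\to y$ in $C(\bar Q_T)$ and hence in $L^\infty(Q_T)$, and restricting to $t=T$ gives $\|y_n(\cdot,T)-y(\cdot,T)\|_{L^\infty(\Omega)}\le\|y_n-y\|_{C(\bar Q_T)}\to 0$.

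I expect the main obstacle to be the sharp estimate \eqref{KeyInq0}: maximal regularity naturally produces $\|y_0\|_{W^{2-2/p,p}(\Omega)}$, and lowering this to the weaker $\|y_0\|_{L^\infty(\Omega)}$ genuinely requires the splitting together with a maximum-principle bound for the homogeneous part, as well as the verification---using the boundary compatibility in $(H1)$---that this part is continuous up to $t=0$ and the lateral boundary. The remaining steps are applications of the cited regularity theorems and a routine compactness argument.
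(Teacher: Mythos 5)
Your argument is correct: maximal $L^p$ parabolic regularity together with the anisotropic embedding $W^{2,1}_p(Q_T)\hookrightarrow C^{\gamma,\gamma/2}(\bar Q_T)$ (valid since $p>1+\frac N2$) gives existence, uniqueness and membership in $Y_T$; the splitting into a zero-initial-data part estimated in $W^{2,1}_p$ and a homogeneous part controlled by the parabolic maximum principle yields the sharper bound \eqref{KeyInq0} with $\|y_0\|_{L^\infty(\Omega)}$; and the compactness-plus-uniqueness argument gives the weak-to-strong continuity statement. The paper itself offers no proof but defers to \cite[Lemma 2.2]{KKR-2024-3} and \cite[Theorem 2.1]{Casas-2020}, whose arguments proceed along essentially this same route, so your reconstruction matches the intended approach.
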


\subsection{Optimal control problem with fixed end-time}

Given a triple $(T, y, q)\in \Phi$ (or $(T, y, u)\in \Phi'$), we define the extension of $y$ on the right by setting 
\begin{align*}
    y_e(x, t)=
    \begin{cases}
        y(x, t)\quad &\text{if}\quad (x, t)\in\bar\Omega\times [0, T]\\
        y(x, T)\quad  &\text{if}\quad (x, t)\in\bar\Omega\times (T,  +\infty). 
    \end{cases}
\end{align*} Then $y_e$ is continuous on $\bar\Omega\times [0, +\infty)$. Moreover, for each $T_0\in (T, +\infty)$, $y_e$ is uniform continuous on the compact set $\bar\Omega\times[0, T_0]$. 

\begin{definition}\label{Def-LocalOptim}   A triple $(T_*,  y_*,  q_*) \in \Phi$ is said to be a locally optimal solution of $(P)$ if  there exists some $\varepsilon > 0$ such that 
    \begin{align}\label{StrongOptimDef}
     T_* \le  T,     &\quad  \forall (T, y, q)\in \Phi\ \text{satisfying}\notag \\ 
     & \quad  |T -  T_*| +  \max_{(x, t) \in \bar \Omega \times [0, T_*\vee T]} |y_e(x, t) -  y_{*e}(x, t)| + \mathop{\rm esssup}\limits_{(x,t)\in\Omega\times [0, T_*]}|q(x, \frac{Tt}{T_*})-  q_*(x, t)| < \varepsilon. 
    \end{align} Hereafter $T_*\vee T=\max (T_*, T)$ and $Q_{T_*} = \Omega \times (0, T_*)$.   Moreover, if $T_* \le   T$ for all $(T, y, q)\in \Phi$ 
   we say  $(T_*,  y_*,  q_*)$ is a globally optimal solution of $(P)$. 
\end{definition} It is noted that when $T=T_*$, Definition \ref{Def-LocalOptim}  becomes a definition for optimal solutions to the problem with fixed final time $T_*$. {\it Locally optimal solutions of $(P')$ are defined similarly to Definition \ref{Def-LocalOptim}.}

\begin{proposition}
\label{E1}
    $(T_*,  y_*,  q_*) \in \Phi$ is a locally optimal solution to $(P)$ if and only if $(T_*,  y_*,  u_*) \in \Phi'$ with $u_*  = q_* + \alpha y_*$  is a locally optimal solution to $(P')$. 
\end{proposition}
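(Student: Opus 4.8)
The plan is to exhibit an explicit bijection between the feasible sets $\Phi$ and $\Phi'$ that fixes the pair $(T, y)$, and then to transfer the notion of local optimality across it. Since the objective $J = T$ depends only on $T$, which is preserved, the whole substance of the proposition lies in matching the neighborhoods appearing in the two definitions of local optimality.

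For the bijection I would check that the affine map $(T, y, q) \mapsto (T, y, u)$ with $u := q + \alpha y$ sends $\Phi$ onto $\Phi'$, with inverse $(T, y, u) \mapsto (T, y, q)$, $q := u - \alpha y$. Indeed, the state equation \eqref{P2} reads $y_t - \Delta y = q$ (the control variable $w$ of $(P)$ being denoted $q$ here), so $y_t - \Delta y + \alpha y = q + \alpha y = u$ and $y$ solves \eqref{P2.0}--\eqref{P3.0} with control $u$; by the uniqueness in Lemma \ref{Lemma-stateEq} this $y$ is exactly the state associated with $u$. The mixed constraint \eqref{P5}, $a \le q + \alpha y \le b$, becomes the pure constraint \eqref{P5.0}, $a \le u \le b$; the boundary, initial and terminal conditions involve only $(T, y)$ and are untouched; and since $y \in C(\bar Q_T) \subset L^\infty(Q_T)$, the relation $u = q + \alpha y$ preserves membership in $L^\infty(Q_T)$. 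The two maps are mutually inverse, so corresponding triples share the same $T$ and $y$, hence the same value $J = T$.

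Because $T$ and $y$ coincide on corresponding triples, the summands $|T - T_*|$ and $\max|y_e - y_{*e}|$ in the neighborhood are literally identical in the definitions for $(P)$ and $(P')$; the only discrepancy is in the control term, and this is where I expect the real work. With $u_* = q_* + \alpha y_*$ one has the pointwise identity
$$u(x, \tfrac{Tt}{T_*}) - u_*(x, t) = \big[q(x, \tfrac{Tt}{T_*}) - q_*(x, t)\big] + \alpha\big[y(x, \tfrac{Tt}{T_*}) - y_*(x, t)\big],$$
so the two control differences differ by $\alpha\,[y(x, Tt/T_*) - y_*(x, t)]$. I would estimate this correction by splitting
$$|y(x, \tfrac{Tt}{T_*}) - y_*(x, t)| \le |y_e(x, \tfrac{Tt}{T_*}) - y_{*e}(x, \tfrac{Tt}{T_*})| + |y_{*e}(x, \tfrac{Tt}{T_*}) - y_{*e}(x, t)|,$$
bounding the first term by $\max|y_e - y_{*e}|$ (note $Tt/T_* \in [0, T] \subset [0, T_* \vee T]$) and the second, using $|Tt/T_* - t| \le |T - T_*|$ for $t \in [0, T_*]$, by the modulus of continuity $\omega$ of the \emph{fixed} extended reference state $y_{*e}$ on the compact cylinder $\bar\Omega \times [0, T_* + 1]$, where $y_{*e}$ is uniformly continuous as noted after the definition of the extension. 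The genuine obstacle is thus the time-rescaling shift $Tt/T_*$ inside the state argument: were $\alpha = 0$ the two control terms would agree verbatim, and it is precisely the coupling of this rescaling with the $\alpha y$ shift that must be absorbed.

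To conclude the forward implication, given $\varepsilon > 0$ for $(P)$ I would pick $\varepsilon' \le 1$ for $(P')$ so small that $(1 + \alpha)\varepsilon' < \varepsilon/2$ and $\alpha\,\omega(\varepsilon') < \varepsilon/2$; taking essential suprema in the identity above and summing, any triple lying in the $\varepsilon'$-neighborhood of $(T_*, y_*, u_*)$ for $(P')$ then maps under the bijection into the $\varepsilon$-neighborhood of $(T_*, y_*, q_*)$ for $(P)$, so the local optimality of the latter yields $T_* \le T$ and hence the local optimality of $(T_*, y_*, u_*)$ for $(P')$. The reverse implication is symmetric: in the displayed identity $q$ and $u$ play interchangeable roles and the correction term is again $\alpha[y - y_*]$, bounded by the same estimate, so the same choice of parameters transfers local optimality back from $(P')$ to $(P)$. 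All the content is therefore in the control-term comparison; the feasibility bijection is routine algebra.
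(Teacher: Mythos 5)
Your proposal is correct and follows essentially the same route as the paper: both arguments use the affine bijection $u = q + \alpha y$ fixing $(T,y)$, reduce the comparison of neighborhoods to the control term via the identity $u(x,Tt/T_*) - u_*(x,t) = [q(x,Tt/T_*) - q_*(x,t)] + \alpha[y(x,Tt/T_*) - y_*(x,t)]$, and absorb the correction by splitting through $y_{*e}$ and invoking its uniform continuity on a compact cylinder $\bar\Omega\times[0,T_0]$ with $|Tt/T_* - t|\le |T-T_*|$. The only differences are cosmetic (your modulus $\omega$ versus the paper's explicit $\delta$, and your $(1+\alpha)$ versus the paper's $\max(1,\alpha)$ normalization), and your choice of constants closes the estimate.
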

\begin{proof}
    $(\Rightarrow)$  Assume that $(T_*,  y_*,  q_*) \in \Phi$ is a locally optimal solution to $(P)$. Let $\varepsilon > 0$ such that (\ref{StrongOptimDef}) is valid. Fix a number $T_0> T_*$.  By the uniform continuity of $y_{*e}$ on $\bar\Omega\times [0,  T_0]$, there exists $\delta \in (0, \frac{\varepsilon}{4{\rm max}(1, \alpha)})$ such that 
\begin{align}
\label{01}
| y_{*e}(x, t_1)-y_{*e}(x, t_2)|\le \frac{\epsilon}{4{\rm max}(1, \alpha)}  \quad  \forall t_1, t_2\in [0,  T_0] \ \  \text{satisfying}\ \  |t_1-t_2| < \delta. 
\end{align}
Define $u_*  = q_* + \alpha y_*$, we see that $(T_*,  y_*,  u_*) \in \Phi'$. Let $(T, y, u) \in \Phi'$ such that 
 \begin{align}
 \label{02}
    |T -  T_*| +  \max_{(x, t) \in \bar \Omega \times [0, T_*\vee T]} |y_e(x, t) -  y_{*e}(x, t)| + \mathop{\rm esssup}\limits_{(x,t)\in\Omega\times [0, T_*]}|u(x, \frac{Tt}{T_*})-  u_*(x, t)| < {\rm min}(\delta, T_0 - T_*). 
    \end{align} 
    Putting $q  = u  - \alpha y$, then it's easy to check that $(T, y, q) \in \Phi$. By (\ref{02}) we have 
    \begin{align}
        &  |T -  T_*|, \  \mathop{\rm esssup}\limits_{(x,t)\in\Omega\times [0, T_*]}|u(x, \frac{Tt}{T_*})-  u_*(x, t)| < \delta  \le  \frac{\varepsilon}{4{\rm max}(1, \alpha)}  \le  \frac{\varepsilon}{4}   \label{03} \\ 
        {\rm and} \quad & |T -  T_*| +  \max_{(x, t) \in \bar \Omega \times [0, T_*\vee T]} |y_e(x, t) -  y_{*e}(x, t)|  < \delta \le \frac{\varepsilon}{4{\rm max}(1, \alpha)}  \le  \frac{\varepsilon}{4}.  \label{04}
    \end{align}
    Without loss of generality, we may assume that $T \ge T_*$, then  we have  $[0, T\vee T_*]=[0, T]$. We have for $(x, t)\in\Omega\times [0, T_*]$,   
    \begin{align}
    \label{05}
        |q(x, \frac{Tt}{T_*})-  q_*(x, t)| \le |u(x, \frac{Tt}{T_*})-  u_*(x, t)| +  \alpha |y(x, \frac{Tt}{T_*})-  y_*(x, t)|
    \end{align}
    Also, for $(x, t)\in\Omega\times [0, T_*]$,
    \begin{align}
    \label{06}
        |y(x, \frac{Tt}{T_*})-  y_*(x, t)|   \le |y(x, \frac{Tt}{T_*})  -  y_{*e}(x, \frac{Tt}{T_*})|  +  | y_{*e}(x, \frac{Tt}{T_*})  -  y_*(x, t)|,  
    \end{align}
    \begin{align}
        |y(x, \frac{Tt}{T_*})  -  y_{*e}(x, \frac{Tt}{T_*})|  &\le   \max_{(x, t) \in \bar \Omega \times [0, T_*]} |y(x, \frac{Tt}{T_*})  -  y_{*e}(x, \frac{Tt}{T_*})|  \nonumber \\ 
        &\le   \max_{(x, t) \in \bar \Omega \times [0, T]} |y(x,  t)  -  y_{*e}(x,  t)| <  \frac{\varepsilon}{4{\rm max}(1, \alpha)}.  \label{07}
    \end{align}
    We have that for all $t \in [0, T_*]$, $\frac{t}{T_*} \in [0, 1]$ and hence, $|\frac{Tt}{T_*}  -  t| = \frac{t}{T_*}|T  - T_*| \le |T - T_*| \le \delta$. It follows from (\ref{01}) that 
    \begin{align}
        \label{08}
        | y_{*e}(x, \frac{Tt}{T_*})  -  y_*(x, t)|  \le \frac{\varepsilon}{4{\rm max}(1, \alpha)}.
    \end{align}
    From (\ref{06}), (\ref{07}) and (\ref{08}) we obtain 
    \begin{align}
        \label{09}
        |y(x, \frac{Tt}{T_*})-  y_*(x, t)|   < \frac{\varepsilon}{4{\rm max}(1, \alpha)} + \frac{\varepsilon}{4{\rm max}(1, \alpha)} = \frac{\varepsilon}{2{\rm max}(1, \alpha)}. 
    \end{align}
    From (\ref{05}), (\ref{03}) and (\ref{09}) we obtain
    \begin{align}
        \label{010}
        |q(x, \frac{Tt}{T_*})-  q_*(x, t)| < \frac{\varepsilon}{4}   + \alpha \frac{\varepsilon}{2{\rm max}(1, \alpha)}  \le \frac{3 \varepsilon}{4}. 
    \end{align}
    Combining this with (\ref{04}),  we have
    \begin{align}
        |T -  T_*| +  \max_{(x, t) \in \bar \Omega \times [0, T_*\vee T]} |y_e(x, t) -  y_{*e}(x, t)|   + \mathop{\rm esssup}\limits_{(x,t)\in\Omega\times [0, T_*]}|q(x, \frac{Tt}{T_*})-  q_*(x, t)| < \varepsilon.
    \end{align}
    It follows from this and (\ref{StrongOptimDef}) that $T \ge T_*$. Thus, $(T_*,  y_*,  u_*)$ is a locally optimal solution to $(P')$. 

     $(\Leftarrow)$   Conversely, if $(T_*,  y_*,  u_*) \in \Phi'$  is a locally optimal solution to $(P')$. By the same argument as above, we can show that $(T_*,  y_*, q_*)  =  (T_*,  y_*,  u_* - \alpha y_*)$ is a locally optimal solution to $(P)$.

\end{proof}

We set  $Q_1=\Omega\times(0, 1)$, $\Sigma_1 = \Gamma\times [0, 1]$ and 
\begin{align*}
    U_1=  L^\infty(Q_1),   \
    Y_1= \Big\{\zeta \in W^{1, 1}_2(0, 1; D, H) \cap W^{2,1}_p(Q_1)| \  \Delta \zeta \in L^p(Q_1)\Big\}.
\end{align*}
Then $Y_1$ is a Banach space under the graph norm $\|\zeta\|_{Y_1}:= \|\zeta\|_{W^{1, 1}_2(0, 1; D, H)} + \|\zeta\|_{W^{2,1}_p(Q_1)} +   \big\|\Delta \zeta\big\|_{L^p(Q_1)}$.   By changing variable 
\begin{align*}
t= Ts, \quad s \in [0, 1]
\end{align*} 
and $\zeta(x, s) = y(x, Ts)$, $v(x, s) = u(x, Ts)$ in $(P')$,  we obtain the following  optimal control problem with fixed final time: 
\begin{align}
    &J(T,  \zeta, v)    =  T   \to  \inf    \label{P1.1}\\
    &{\rm s.t.}   \notag\\
    (P'') \quad  \quad \quad &\dfrac{\partial \zeta}{\partial s}  -   T \Delta \zeta +  \alpha T \zeta  = Tv   \quad \text{in} \ Q_1, \quad \zeta(x, s)=0 \quad  \text{on} \  \Sigma_1,   \label{P2.1}\\
    &\zeta(0)=y_0 \quad \text{in} \ \Omega,    \label{P3.1}\\
    &H(\zeta(1))  \le   0,      \label{P4.1}\\
    &a \le   v(x, s) \le  b \quad {\rm a.a.} \ x \in \Omega, \ \forall s  \in [0, 1].   \label{P5.1}
\end{align}
Define $Z =   \mathbb{R}\times Y_1\times U_1$ and denote $\Phi''$ by  the  feasible set of problem $(P'')$, that is, $\Phi''$ consists of triples $(T, \zeta, v) \in Z$ which satisfies conditions \eqref{P2.1}-\eqref{P5.1}.  Given a vector $z=(T, \zeta, v)\in Z$, 
\begin{align*}
\|z\|_*:=  |T| +\|\zeta\|_{C(\bar Q_1)}+\|v\|_{U_1},\ \
\|z\|_Z:=  |T| +\|\zeta\|_{Y_1}+\|v\|_{U_1}. 
\end{align*} Given $z_0\in Z$ and $r>0$, we denote by $B_{*Z}(z_0,r)$ and $B_Z(z_0, r)$ the balls center at $z_0$ and radius $r$ in norm $\|\cdot\|_*$ and $\|\cdot\|_Z$, respectively. 

\begin{definition}  
\label{locally-strongly-optimal-solutions}
$(i)$ \   Vector $z_*=  (T_*, \zeta_*, v_*)\in \Phi''$ is a locally optimal solution to $(P'')$ if there exists $\delta>0$ such that 
\begin{align}
\label{DfLocalOptimSol}
T_*  \le T, \quad   \quad \forall   z = (T, \zeta, v) \in \Phi'' \ \text{satisfying}\ \|z-z_*\|_*   <  \delta.  
\end{align}  

$(ii)$ \   An element  $z_*=  (T_*, \zeta_*, v_*)\in \Phi''$ is said to be a locally strongly optimal solution of the problem $(P'')$ if there exists $\delta>0$ such that 
\begin{align}
\label{DfLocalOptimSol.strong}
T_*  \le T, \quad   \quad \forall   z = (T, \zeta, v) \in \Phi'' \ \text{satisfying}\  |T  -  T_*|   <  \delta.  
\end{align} 

$(iii)$ \ If $T_*  \le T$ for all $(T, \zeta, v) \in \Phi''$  we say  $(T_*,  y_*,  v_*)$ is a globally optimal solution of $(P'')$.    
\end{definition}
Since $Y_1\hookrightarrow C(\bar Q_1)$, there exists $\gamma>0$ such that $\|z\|_*\leq \gamma \|z\|_Z$ for all $z\in Z$.  Therefore,  if $\|z-z_*\|_Z  <   \frac{\delta}{\gamma}$, then $\|z-z_*\|_*   <  \delta$. Consequently,  if $z_*$ is a locally optimal solution to $(P'')$ in norm $\|\cdot\|_*$, then it is also locally optimal solution to $(P'')$  in norm $\|\cdot\|_Z$. Also, we have  $|T - T_*| \le \|z-z_*\|_*$.  This implies that if $z_*\in \Phi''$ is a locally strongly optimal solution of the problem $(P'')$ then it is a locally optimal solution of the problem $(P'')$.

The following propositions give relations between optimal solutions of $(P')$ and $(P'')$.
\begin{proposition} 
\label{E2}
\label{relationP'andP''}
$(i)$ \  Suppose that $ (T_*, y_*,  u_*)\in  \Phi'$  is a locally  optimal solution to $(P')$. Let    
    \begin{align*} 
     \zeta_* (x, s) := y_*(x, T_*s),\   v_*(x, s) := u_*(x,  T_*s)  \   {\rm for} \ s \in [0, 1]. 
\end{align*} 
Then the vector  $(T_*,  \zeta_*,  v_*)$ is a locally optimal solution to $(P'')$.  

$(ii)$ \ Conversely, if  $T_*>0$ and  $(T_*, \zeta_*, v_*)\in \Phi''$ is a locally optimal solution to $(P'')$. Define
    \begin{align*} 
   y_*(x, t):=\zeta_* (x, \frac{t}{T_*}),\   u_*(x, t):= v_*(x, \frac{t}{T_*})  \   {\rm for} \ t \in [0, T_*]. 
\end{align*} 
Then the vector $(T_*,  y_*, u_*)$  is  a locally  optimal solution to $(P')$.
\end{proposition}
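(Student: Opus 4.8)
The plan is to prove Proposition \ref{relationP'andP''} by a change-of-variables argument that mirrors the one already used in Proposition \ref{E1}, exploiting the fact that the map $t = T_* s$ sets up a bijection between feasible triples of $(P')$ near $(T_*, y_*, u_*)$ and feasible triples of $(P'')$ near $(T_*, \zeta_*, v_*)$. First I would verify the easy structural facts: under the substitution $\zeta(x,s) = y(x, T s)$, $v(x,s) = u(x, Ts)$, a triple $(T, y, u)$ satisfies the state equation \eqref{P2.0}--\eqref{P3.0} and constraints \eqref{P4.0}--\eqref{P5.0} on $Q_T$ if and only if $(T, \zeta, v)$ satisfies \eqref{P2.1}--\eqref{P5.1} on $Q_1$; this is exactly how $(P'')$ was derived, so feasibility transfers both ways, i.e.\ the substitution is a bijection $\Phi' \leftrightarrow \Phi''$ (once we fix that the time variable $T$ is the same on both sides). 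Since the objective is $T$ in both problems, the substitution also preserves objective values, so the only remaining content is that it respects the \emph{local} structure of the two neighborhoods appearing in Definition \ref{Def-LocalOptim} and Definition \ref{locally-strongly-optimal-solutions}(i).

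For part $(i)$, I would argue by contradiction: suppose $(T_*, \zeta_*, v_*)$ is not locally optimal for $(P'')$. Then for every $\delta > 0$ there is a feasible $(T, \zeta, v) \in \Phi''$ with $T < T_*$ and $\|(T,\zeta,v) - (T_*,\zeta_*,v_*)\|_* < \delta$, i.e.\ $|T - T_*| + \|\zeta - \zeta_*\|_{C(\bar Q_1)} + \|v - v_*\|_{U_1}$ is small. Pulling this back via $y(x,t) = \zeta(x, t/T)$, $u(x,t) = v(x, t/T)$ produces a feasible triple of $(P')$ with $T < T_*$, and I must check that the $(P')$-distance used in the definition of local optimality (the analogue of \eqref{StrongOptimDef}, with $t \mapsto Tt/T_*$ reparametrizing the control and with $y_e$ the right-extension) becomes correspondingly small. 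The key computations are: (a) $\mathrm{esssup}_{(x,s)}|v(x,s) - v_*(x,s)|$ controls $\mathrm{esssup}_{(x,t)\in\Omega\times[0,T_*]}|u(x, Tt/T_*) - u_*(x,t)|$, because $u(x, Tt/T_*) = v(x, t/T_*)$ and $u_*(x,t) = v_*(x, t/T_*)$, so the control discrepancy transfers \emph{exactly} under the rescaling; and (b) the state discrepancy $\max_{\bar\Omega \times [0, T_*\vee T]}|y_e - y_{*e}|$ is bounded using $\|\zeta - \zeta_*\|_{C(\bar Q_1)}$ together with the uniform continuity of $\zeta_{*e}$ (equivalently $y_{*e}$) to absorb the gap coming from $|T - T_*|$, exactly as in inequalities \eqref{01}, \eqref{07}, \eqref{08} of Proposition \ref{E1}. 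Choosing $\delta$ small enough then forces the $(P')$-distance below any prescribed $\varepsilon$, contradicting local optimality of $(T_*, y_*, u_*)$ for $(P')$. Part $(ii)$ is the symmetric statement and is handled by running the same estimates in the reverse direction, now pushing forward via $\zeta(x,s) = y(x, T_*s)$; here the role of the essential-supremum reparametrization is slightly cleaner because $(P'')$ uses the plain norm $\|\cdot\|_*$ rather than the rescaled control distance of Definition \ref{Def-LocalOptim}.

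The main obstacle is the bookkeeping around the control reparametrization and the two different time horizons. Note that the $(P')$-neighborhood reparametrizes the competitor control by $t \mapsto Tt/T_*$ on the fixed domain $[0,T_*]$, whereas after the change of variables the controls live on the common domain $[0,1]$; I must track that the factor $Tt/T_*$ in the $(P')$ distance corresponds, under $s = t/T_*$, to evaluating $v$ at $s \cdot T/T_* \cdot$, and confirm that this matches the pullback $u(x,t) = v(x, t/T)$ so that the control terms line up \emph{without any loss}. The state term is the genuinely analytic step: because $T \ne T_*$ the extended states $y_e, y_{*e}$ are compared on the larger interval $[0, T_*\vee T]$, and the bound on the portion of that interval beyond $\min(T,T_*)$ must be extracted from the uniform continuity of the extension, precisely the mechanism set up in the paragraph preceding Definition \ref{Def-LocalOptim} and deployed in \eqref{01}. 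Once these two transfer estimates are in place, both implications follow by the same contradiction scheme, and I expect the proof to be essentially a streamlined repetition of the argument in Proposition \ref{E1} adapted to the $\|\cdot\|_*$ norm on $Z$.
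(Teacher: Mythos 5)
Your proposal is correct and follows essentially the same route as the paper: the same change of variables, the same exact transfer of the control discrepancy under the reparametrization $t\mapsto Tt/T_*$, and the same use of uniform continuity of the extended state $y_{*e}$ to absorb the $|T-T_*|$ gap in the state comparison, with the only difference being that you phrase the implication as a contradiction while the paper verifies it directly. No gap.
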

\begin{proof}
$(i)$ \   By definition, $T_*>0$ and there exists $\varepsilon > 0$  such that \eqref{StrongOptimDef} is valid. Fix a number $T_0> T_*$.  By the uniform continuity of $y_{*e}$ on $\bar\Omega\times [0,  T_0]$, there exists $\delta \in (0, \frac{\varepsilon}{4})$ such that 
\begin{align}
\label{0.1}
| y_{*e}(x, t_1)-y_{*e}(x, t_2)|\le \frac{\varepsilon}{4}  \quad  \forall t_1, t_2\in [0,  T_0] \ \  \text{satisfying}\ \  |t_1-t_2| < \delta. 
\end{align} We can choose $\delta>0$ small enough such that $T>0$ whenever $|T-T_*|<\delta$.
Clearly, $(T_*,  \zeta_*, v_*) \in \Phi''$.  Let $(T, \zeta, v)\in\Phi''$ such that 
\begin{align}
\label{0.2}
  |T-T_*|   + \|\zeta-\zeta_*\|_{C(\bar Q_1)} + \|v - v_*\|_{L^\infty(\bar Q_1)}   <\min(\delta, T_0-T_*). 
\end{align}  
We define $ y(x, t)=  \zeta(x,  \frac{t}{T}),\   u(x, t)=  v(x, \frac{t}{T})$. It is easy to check that $(T, y, u) \in \Phi'$.  Without loss of generality, we may assume that $T> T_*$. Then from condition $T- T_*< T_0- T_*$, we have $T<T_0$ and  $[0, T\vee T_*]=[0, T]$. Therefore, for $(x, t)\in\Omega\times [0, T]$, we have  
\begin{align}
    \label{0.4}
    |y_e(x, t) - y_{*e}(x, t)|  &=  \Big|y(x, t) - y_{*e}\big(x, t)\big)\Big| \nonumber \\
    %&=   \Big|\zeta\big(x, \frac{t}{T}\big) -y_{*e}(x,\xi(\xi^{-1}(t))\Big|   \nonumber \\
    &\le   \Big|\zeta\big(x,  \frac{t}{T}\big) -  y_*\big(x, \frac{tT_*}{T}\big)\Big|  + 
    \Big| y_*\big(x, \frac{tT_*}{T}\big)  -  y_{*e}\big(x, t\big)  \Big|   \nonumber \\
    &=   \Big|\zeta\big(x, \frac{t}{T}\big) - \zeta_*\Big(x,  \frac{t}{T} \Big)\Big|  + 
    \Big|y_{*e}\big(x, \frac{tT_*}{T}\big)   -  y_{*e}\big(x,  t\big)  \Big|.   
\end{align}
 Note that $\frac{t}{T} \in [0, 1]$ for all $t \in [0, T]$. By (\ref{0.2}) we deduce that 
\begin{align}
\label{0.41}
    \Big|\zeta\Big(x,  \frac{t}{T}\Big) -  \zeta_*\Big(x,  \frac{t}{T} \Big)\Big| \le  \|\zeta- \zeta_*\|_{C(\bar Q_1)} \le \delta \le \frac{\varepsilon}{4}
\end{align}
for all $(x, t) \in \bar \Omega \times [0,  T]$. Moreover, for all $t \in [0,  T]$  we have $t_1 = \frac{tT_*}{T} \in [0, T_*] \subset [0, T]$, $t_2 =  t \in [0,  T]$ and $|t_1 - t_2| \le |T - T_*| \le \delta$. Therefore, we have from (\ref{0.1}) that  
\begin{align*}
    \Big|y_{*e}\big(x, \frac{tT_*}{T}\big)   -  y_{*e}\big(x,  t\big)  \Big|  \le \frac{\varepsilon}{4}\quad \forall (x, t) \in \bar \Omega \times [0, T].
\end{align*} Combining this with  (\ref{0.4}) and (\ref{0.41}), we get 
\begin{align*}    %\label{0.7}
    |y_e(x, t) -  y_{*e}(x, t)|  \le \frac{\varepsilon}{4} + \frac{\varepsilon}{4} = \frac{\varepsilon}{2}\quad \forall (x, t) \in \bar \Omega \times [0, T].
\end{align*}
 This implies that $ 
    \mathop {\rm max}\limits_{(x, t) \in \bar \Omega \times [0, T\vee T_*]} |y_e(x, t) - y_{*e}(x, t)|  \le \frac{\varepsilon}{2}$.  Hence 
\begin{align} \label{0.9}
    |T - T_*|  +  \mathop {\rm max}\limits_{(x, t) \in \bar \Omega \times [0, T\vee T_*]} |y_e(x, t) -  y_{*e}(x, t)|  \le \frac{\varepsilon}{4} +  \frac{\varepsilon}{2}  = \frac{3\varepsilon}4. 
\end{align}  On the other hand 
\begin{align*}
    {\rm esssup}_{t\in [0, T_*]}| u(\frac{Tt}{T_*})-u_*(t)|={\rm essup}_{t\in [0, T_*]}| v(x, \frac{t}{T_*})-v_*(\frac{t}{T_*})|\leq\|v-v_*\|_{L^\infty(Q_1)}\leq \delta<\frac{\epsilon}4.
\end{align*} Combining this with \eqref{0.9}, yields 
$$
|T - T_*|  +  \mathop {\rm max}\limits_{(x, t) \in \bar \Omega \times [0, T\vee T_*]} |y_e(x, t) -  y_{*e}(x, t)| +  {\rm esssup}_{t\in [0, T_*]}| u(\frac{Tt}{T_*})-u_*(t)|<\varepsilon. 
$$
From this and \eqref{StrongOptimDef}, we have $T_* \le T$. Thus $(T_*,  \zeta_*,  v_*)$ is a locally optimal solution to $(P'')$.

$(ii)$ \   Let $\delta>0$ such that \eqref{DfLocalOptimSol} is valid. Fix a number  $T_0> T_*$. Since $y_{*e}$ is uniform continuous on $\bar\Omega\times [0,  T_0]$, there exists $\varepsilon\in (0, \delta)$ such that 
$|y_{*e}(x, t)-y_{*e}(x, t')|< \frac{\delta}3$ for all $t, t'\in [0, T_0]$ satisfying $|t-t'|<\varepsilon$. We can choose $\varepsilon>0$ small enough so that $T>0$ whenever $|T-T_*|< \varepsilon$.  We now take $(T, y, u)\in\Phi'$  satisfying
$$
|T-T_*| + \max_{(x, t)\in\bar\Omega\times [0, T \vee T_*]}|y_e(x,t)- y_{*e}(x, t)| + {\rm esssup}_{(x, t)\in \Omega\times[0, T_*]}|u(x, \frac{Tt}{T_*})- u_*(x, t)| <  \min(\frac{\varepsilon}6, T_0- T_*). 
$$
Without loss of generality, we can assume that $T>T_*$. Then we have $T<T_0$.  Let $\zeta(x, s)=y(x, Ts)$ and $v(x, s)=u(x, Ts)$. Then $(T, \zeta, v)\in\Phi''$. Note that 
\begin{align}\label{Estim1}
  |T- T_*|< \min(\frac{\varepsilon}6, T_0- T_*) \le \frac{\varepsilon}6.   
\end{align} 
From this and uniform continuity of $y_{*e}$,  we have for all $(x, s)\in \bar\Omega\times [0, 1]$ that
 \begin{align*}
|\zeta(x, s)-\zeta_*(x, s)| &=|y(x, T s)-  y_*(x, T_*s)|\\
&\leq |y(x, T s)- y_{*e}(x, T s)| +|y_{*e}(x, T s)- y_{*e}(x, T_* s)|
\leq \frac{\varepsilon}6 + \frac{\delta}3\leq \frac{\delta}2.
\end{align*} This implies that 
\begin{align}\label{Estim2}
    \|\zeta-\zeta_*\|_{C(\bar Q_1)}\leq \frac{\delta}2. 
\end{align}
Also, we have 
\begin{align*}
  {\rm esssup}_{(x, s)\in\Omega\times[0, 1]}|v(x, s)- v_*(x, s)| &={\rm esssup}_{(x, s)\in\Omega\times[0, 1]}|u(x, Ts)-u_*(x, T_*s)|\\
  &={\rm esssup}_{(x, t)\in\Omega\times[0, T_*]}|u(x, \frac{Tt}{T_*})- u_*(x,t))|\leq \frac{\varepsilon}6< \frac{\delta}6. 
\end{align*} Hence 
$ \|v-v_*\|_{L^\infty(Q_1)}\leq \frac{\delta}6$.   Combining this with \eqref{Estim1} and   \eqref{Estim2}, we obtain 
$$
  \|\zeta-\zeta_*\|_{C(\bar Q_1)} + \|v-v_*\|_{L^\infty(Q_1)} +|T-T_*|<  \frac{\delta}2 +\frac{\delta}6 +\frac{\delta}6  <  \delta.
$$ Since $(T_*, \zeta_*, v_*)$ is a locally optimal solution to $(P'')$, we have  $T_*  \le T$. Thus $(T_*,  y_*, u_*)$  is  a locally  optimal solution to $(P')$.

\noindent The proof of the proposition  is complete. 
\end{proof}

\subsection{Existence of optimal solutions}

Let us define 
\begin{align}
\label{dfphi+}
    \Phi''_+ = \{(T, \zeta, v) \in \Phi'': T > 0\} 
\end{align} and impose the following assumption:
\begin{itemize}
    \item [$(H2)$]  $H(y_0) > 0$. 
\end{itemize}

Then we have 

\begin{proposition}\label{E3}
    Suppose that $(H1)$ and  $(H2)$ are valid,  and there exists $(\widetilde T, \widetilde \zeta, \widetilde v) \in \Phi''_+$. Then, the problem ${\rm min}_{(T,  \zeta, v) \in \Phi''_+} T$ has a globally optimal solution $(T_*, \zeta_*, v_*)$ which is a locally strongly optimal solution to $(P'')$. 
\end{proposition}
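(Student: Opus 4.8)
The plan is to apply the direct method of the calculus of variations to the problem $\min_{(T,\zeta,v)\in\Phi''_+}T$, the only genuinely delicate point being to exclude the degenerate limit $T=0$ — and this is precisely where $(H2)$ is needed. Since $(\widetilde T,\widetilde\zeta,\widetilde v)\in\Phi''_+$ by hypothesis, I would set $T_*:=\inf\{T:(T,\zeta,v)\in\Phi''_+\}$, so that $0\le T_*\le\widetilde T$, and pick a minimizing sequence $(T_n,\zeta_n,v_n)\in\Phi''_+$ with $T_n\to T_*$, where we may assume $T_n\le\widetilde T+1$. The controls obey $a\le v_n\le b$ a.e., so $(v_n)$ is bounded in $U_1=L^\infty(Q_1)$ and hence in $L^p(Q_1)$; after a subsequence $v_n\rightharpoonup v_*$ in $L^p(Q_1)$, and $a\le v_*\le b$ because the order interval is convex and strongly closed, hence weakly closed.

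The crucial step, and the main obstacle, is to show $T_*>0$. Assumption $(H2)$ reads $H(y_0)>0$, i.e. $|y_0-y_\Omega|>\lambda$, while every feasible final state satisfies $H(\zeta_n(1))\le0$, i.e. $|\zeta_n(1)-y_\Omega|\le\lambda$; the triangle inequality then forces $|\zeta_n(1)-y_0|\ge\rho:=|y_0-y_\Omega|-\lambda>0$ for every $n$. To control this displacement by $T_n$, I would test the state equation \eqref{P2.1} with $\partial_s\zeta_n$, integrate over $Q_1$, and discard the nonnegative boundary term at $s=1$ (using $\alpha\ge0$ and $\zeta_n\in C([0,1],V)$ by \eqref{keyEmbed3}), obtaining $\int_0^1|\partial_s\zeta_n|^2\,ds\le T_n(\|\nabla y_0\|^2+\alpha|y_0|^2)+T_n^2\|v_n\|_{L^2(Q_1)}^2\le C_0\,T_n$, with $C_0$ uniform in $n$ since $T_n$ and $\|v_n\|_\infty$ are bounded. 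Hence $\rho^2\le|\zeta_n(1)-y_0|^2\le\int_0^1|\partial_s\zeta_n|^2\,ds\le C_0\,T_n$, so $T_n\ge\rho^2/C_0$ and therefore $T_*\ge\rho^2/C_0>0$.

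Once $T_*>0$, the rest is standard compactness. For large $n$ the coefficient $T_n$ lies in the compact interval $[T_*/2,\widetilde T+1]\subset(0,\infty)$, so the parabolic regularity estimate underlying Lemma \ref{Lemma-stateEq} (applied in the rescaled variable on $Q_1$) gives a bound on $\|\zeta_n\|_{Y_1}$ uniform in $n$. Passing to a further subsequence, $\zeta_n\rightharpoonup\zeta_*$ in $Y_1$, and by the compact embedding $Y_1\hookrightarrow\hookrightarrow C(\bar Q_1)$ of \eqref{keyEmbed1} we get $\zeta_n\to\zeta_*$ strongly in $C(\bar Q_1)$. Passing to the limit in the linear weak form of \eqref{P2.1}--\eqref{P3.1} — using $T_n\to T_*$ together with $\nabla\zeta_n\rightharpoonup\nabla\zeta_*$, $\partial_s\zeta_n\rightharpoonup\partial_s\zeta_*$ and $v_n\rightharpoonup v_*$ — shows that $(T_*,\zeta_*,v_*)$ solves the state equation, while $\zeta_n(0)=y_0$ and the embedding $W^{1,1}_2(0,1;D,H)\hookrightarrow C([0,1],V)$ give $\zeta_*(0)=y_0$.

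It remains to check feasibility and the optimality claims. Since $\zeta_n(\cdot,1)\to\zeta_*(\cdot,1)$ in $L^2(\Omega)$ and $H$ is continuous on $L^2(\Omega)$, the inequalities $H(\zeta_n(1))\le0$ pass to $H(\zeta_*(1))\le0$; combined with $a\le v_*\le b$ and $T_*>0$ this yields $(T_*,\zeta_*,v_*)\in\Phi''_+$, and by construction $T_*=\min_{\Phi''_+}T$, so it is a globally optimal solution of $\min_{\Phi''_+}T$. For the local strong optimality in the sense of Definition \ref{locally-strongly-optimal-solutions}(ii), I would take $\delta=T_*>0$: if $(T,\zeta,v)\in\Phi''$ satisfies $|T-T_*|<\delta$, then $T>T_*-\delta=0$, so $(T,\zeta,v)\in\Phi''_+$ and global optimality over $\Phi''_+$ gives $T_*\le T$, which is exactly \eqref{DfLocalOptimSol.strong}. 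I expect the energy-estimate argument for $T_*>0$ to be the only step requiring care, everything else being routine lower semicontinuity and weak-closedness.
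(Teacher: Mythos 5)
Your proposal is correct, and it follows the same overall skeleton as the paper (minimizing sequence, weak compactness of the controls in $L^p(Q_1)$, weak closedness of the order interval, continuity of $H$ at the final time, and the choice $\delta=T_*$ for local strong optimality). The one place where you genuinely diverge is the key step $T_*>0$. The paper argues qualitatively: it first passes to the limit using the continuity statement in Lemma \ref{Lemma-stateEq} (so that $\zeta_n\to\zeta_*=\zeta(T_*,v_*)$ in $L^\infty(Q_1)$ and $\zeta_n(1)\to\zeta_*(1)$ even if $T_*=0$), and then observes that $T_*=0$ would force $\zeta_*\equiv y_0$, hence $H(y_0)=H(\zeta_*(1))\le 0$, contradicting $(H2)$. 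You instead prove a quantitative lower bound: the triangle inequality turns $(H2)$ and feasibility into $\|\zeta_n(1)-y_0\|_{L^2(\Omega)}\ge\rho>0$, and the energy estimate obtained by testing with $\partial_s\zeta_n$ gives $\|\partial_s\zeta_n\|^2_{L^2(Q_1)}\le C_0T_n$, whence $T_n\ge\rho^2/C_0$ uniformly. Your route is self-contained (it does not rely on the weak-to-strong continuity of the solution map at $T=0$) and yields an explicit lower bound for the optimal time, at the price of an extra integration-by-parts argument that requires the strong regularity $\zeta_n\in W^{1,1}_2(0,1;D,H)$ (which is available here). The paper's route is shorter because it delegates all the analysis to Lemma \ref{Lemma-stateEq}; your reconstruction of the limit state via uniform $Y_1$ bounds and the compact embedding \eqref{keyEmbed1} reproves essentially the same content by hand. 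Both arguments are sound.
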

\begin{proof} Let $T_* :=  {\rm inf}_{(T,  \zeta, v) \in \Phi''_+} T$. Then, we have $0 \le T_* \le \widetilde T$. 
    This implies that $T_*$ is finite. By definition, there exists a sequence $\{z_n = (T_n,  \zeta_n, v_n) \}\subset \Phi''_+$  such that $T_* = {\rm lim}_{n \to + \infty} T_n$.   

    Since $\{v_n\}$ is bounded in $L^p(Q_1)$, we may assume that   $ v_n \rightharpoonup  v_*$ in $L^p(Q_1)$. Note that the set
  \begin{align}
  \label{Kp}
      K_p:= \{v \in L^p(Q_1): a \le v(x, s) \le b \ {\rm a.a.} \  (x, s) \in Q_1\}
  \end{align}
  is a closed convex set in $ L^p(Q_1)$. Hence,  it is a weakly closed set. Since $v_n \in K_p$, we must have $v_*  \in K_p$. This means $a \le v_*(x, s) \le b  \ {\rm a.a.} \  (x, s) \in Q_1$.  Hence, $v_* \in  L^\infty(Q_1)$. By Lemma \ref{Lemma-stateEq},    $\zeta_n =   \zeta(T_n, v_n)  \to \zeta(T_*, v_*) =: \zeta_*$ in $L^\infty(Q_1)$ and $\zeta_n(1)  \to \zeta_*(1)$ in $L^\infty(\Omega)$. Hence
  \begin{align}
      H(\zeta_*(1))  =  {\rm lim}_{n \to + \infty} H(\zeta_n (1)) \le 0. 
  \end{align}
 If $T_* = 0$,  then we have $\zeta_*(x, s) = y_0(x)$ for all $(x, s) \in Q_1$. Hence, $\zeta_*(1) = y_0$. Then, we have $H(y_0) = H(\zeta_*(1)) \le 0$ which contradicts $(H2)$. So we must have $T_* > 0$.
  
  In a summary, we have showed that $T_n\to   T_*$ in $\mathbb R$, $\zeta_n   \to  \zeta_*$ in $L^\infty(Q_1)$, $\zeta_n(\cdot, 1)  \to \zeta_*(\cdot, 1)$ in $L^\infty(\Omega)$, $ v_n \rightharpoonup  v_*$ in $L^p(Q_1)$, $(T_*, \zeta_*, v_*)\in \Phi''_+$ and $T_* :=  {\rm inf}_{(T,  \zeta, v) \in \Phi''_+} T$.  Thus $(T_*, \zeta_*, v_*)$ is a globally optimal solution of the problem ${\rm min}_{(T,  \zeta, v) \in \Phi''_+} T$. It is clear that $(T_*, \zeta_*, v_*)$ is a locally strongly optimal solution to $(P'')$. 
\end{proof}

\begin{corollary}
    Suppose that $(H1)$ and $(H2)$ are valid,  and $\Phi$ is nonempty. Then, the problem $(P)$ has at least one globally optimal solution. 
\end{corollary}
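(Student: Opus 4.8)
The plan is to transfer the existence result for $(P'')$ established in Proposition \ref{E3} back to the original problem $(P)$, using the two changes of variables that link $(P)$, $(P')$ and $(P'')$. The essential observation is that these changes of variables set up a value-preserving bijection between the feasible sets, so that a minimizer of the fixed-end-time problem not only exists but corresponds to a genuine minimizer of $(P)$. Note that Propositions \ref{E1} and \ref{E2} already record the analogous statements for \emph{local} optimality, and their proofs contain exactly the feasibility computations I shall reuse; however, since global optimality is not a special case of those local equivalences, I would carry out the global comparison directly.

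First I would verify that $\Phi \neq \emptyset$ forces $\Phi''_+ \neq \emptyset$. Pick any $(T, y, q) \in \Phi$; by the definition of $\Phi$ we have $T \in (0, +\infty)$, so $T > 0$. Setting $u := q + \alpha y$ turns the mixed constraint \eqref{P5} into the pure control constraint \eqref{P5.0} and the state equation \eqref{P2} into \eqref{P2.0}, whence $(T, y, u) \in \Phi'$. Rescaling time by $t = Ts$ and putting $\zeta(x,s) = y(x, Ts)$, $v(x,s) = u(x, Ts)$ produces $(T, \zeta, v) \in \Phi''$ with $T > 0$, that is, $(T, \zeta, v) \in \Phi''_+$. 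Hence $\Phi''_+$ is nonempty and, since $(H1)$ and $(H2)$ hold, Proposition \ref{E3} applies and yields a globally optimal solution $(T_*, \zeta_*, v_*)$ of $\min_{(T,\zeta,v)\in\Phi''_+} T$; in particular $T_* = \inf_{(T,\zeta,v)\in\Phi''_+} T > 0$.

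Next I would undo the two transformations on the minimizer. Define $y_*(x,t) := \zeta_*(x, t/T_*)$ and $u_*(x,t) := v_*(x, t/T_*)$ for $t \in [0, T_*]$ (legitimate since $T_* > 0$), and then $q_* := u_* - \alpha y_*$; the same computations as above, read in reverse, show $(T_*, y_*, u_*) \in \Phi'$ and $(T_*, y_*, q_*) \in \Phi$. It remains to check that $(T_*, y_*, q_*)$ is \emph{globally} optimal for $(P)$ in the sense of Definition \ref{Def-LocalOptim}. Given an arbitrary $(T, y, q) \in \Phi$, apply the forward transformation of the previous paragraph to obtain an element of $\Phi''_+$ whose first component is again $T$; global optimality of $(T_*, \zeta_*, v_*)$ over $\Phi''_+$ then gives $T_* \le T$. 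As $(T, y, q) \in \Phi$ was arbitrary, $T_* \le T$ for every feasible triple of $(P)$, i.e. $(T_*, y_*, q_*)$ is a globally optimal solution of $(P)$.

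The step requiring care — and the only nonroutine one — is verifying that each of the two changes of variables maps one feasible set onto the other in both directions, i.e. that the state equation, the terminal constraint $H(\cdot) \le 0$ and the control bounds are all preserved. The control bounds pose no difficulty, since $a \le w + \alpha y \le b$ is equivalent to $a \le u \le b$ and the rescaling $t = Ts$ merely relabels the temporal variable without affecting pointwise inequalities; the terminal constraint is preserved because $\zeta(1) = y(T)$ (respectively $y_*(T_*) = \zeta_*(1)$) while $H$ is left unchanged; and the transformation of the parabolic operator is exactly the computation used to derive \eqref{P2.0} and \eqref{P2.1}. All of these are the feasibility identities already established inside the proofs of Propositions \ref{E1} and \ref{E2}, so no new analytic input beyond Proposition \ref{E3} is needed.
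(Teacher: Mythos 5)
Your proposal is correct and follows essentially the same route as the paper, whose proof is simply the one-line citation of Propositions \ref{E1}, \ref{E2} and \ref{E3}. Your additional observation — that the global statement does not follow formally from the \emph{local} optimality equivalences in Propositions \ref{E1} and \ref{E2}, and instead requires the direct value-preserving correspondence between $\Phi$ and $\Phi''_+$ — is a worthwhile refinement of the paper's terse argument, not a departure from it.
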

\begin{proof}
     The conclusion follows from Proposition \ref{E1}, \ref{E2} and \ref{E3}. 
\end{proof}

\subsection{Necessary and sufficient optimality conditions for $(P'')$}

Let us define Banach spaces
\begin{align*}
Z =   \mathbb{R}\times Y_1\times U_1, \quad E = L^p(Q_1) \times \big(H_0^1(\Omega)\cap W^{2-\frac{2}p, p}(\Omega)\big),  \quad W= L^\infty(Q_1).
\end{align*} 
Define mappings $F: Z \to E$, $H : Z \to \mathbb R$ and $G : Z \to W$ by setting
\begin{align*}
    &F(T, \zeta, v)   = \Big(F_1(T, \zeta, v), \  F_2(T, \zeta, v)\Big) 
    = \Big(\frac{\partial \zeta}{\partial s} - T\Delta\zeta + \alpha T\zeta - Tv,  \  \  \zeta(0)- y_0\Big),\\
    &H(T, \zeta, v)  =  H(\zeta(1)), \\
    &G(T, \zeta, v)  = v. 
    \end{align*}
Then $F$, $H$ and $G$ are well defined (see also \cite{KKR-2024-3}, Section 4]).   We now rewrite problem $(P'')$ in the  form
\begin{align}
    \mathop {\rm min}\limits_{(T, \zeta, v) \in Z}T \quad \quad  {\rm {subject \  to}} \quad \quad F(T, \zeta, v) = 0, \  H(T, \zeta, v) \le 0, \  G(T, \zeta, v) \in K_1, 
\end{align}
where  $K_1:=\{ u\in L^\infty(Q_1)| a\leq u(x, s)\leq b\ {\rm a.a.}\ (x, s)\in Q_1\}$. Following Vexler et al \cite{Bon1} we make the following assumption on {\it linearized Slater condition}.

\noindent $(H3)$   Let $(T_*, \zeta_*, v_*)$ be a locally optimal solution to $(P'')$ and the mapping $h(T, v) := H(T, \zeta, v)$. We assume that $-  \partial_Th(T_*, v_*)  > 0$.

In the sequel, we shall use sets
\begin{align*}
 &Q_a=\{(x, s) \in Q_1:  v_*(x, s)=a\}, \\
 &Q_b=\{(x, s) \in Q_1: v_*(x, s) =b\},\\
 &Q_{ab}=\{(x, s) \in Q_1: a<  v_*(x, s) <b\}. 
\end{align*}
The first-order optimality conditions for the problem $(P'')$ are the following.
\begin{proposition}
\label{Pro-KKT-P''}
Suppose assumptions $(H1)$-$(H3)$, $T_*>0$ and $z_*= (T_*, \zeta_*, v_*)\in\Phi''$ is a locally optimal solution to $(P'')$. Then there exist Lagrange multipliers $\mu > 0$ and
\begin{align*}
\varphi, e \in L^\infty(Q_1)  \cap W^{1, 1}_2(0, 1; D, H) \cap C([0, 1], H^1_0(\Omega))
\end{align*}
satisfying the following conditions:

\noindent $(i)$  (the adjoint equations)  
\begin{align}
\label{cm0.1} 
- \dfrac{\partial \varphi}{\partial s} - T_* \Delta\varphi + \alpha T_*\varphi = 0  \ {\rm in} \ Q_1, \quad \quad \varphi = 0 \ {\rm on} \ \Sigma_1,  \quad \quad  \varphi (1) = - \mu(\zeta_*(1) - y_\Omega);
\end{align}

\noindent $(ii)$   (optimality condition for $v_*$) 
\begin{align}
\label{cm0.3}
      -  T_*\varphi(x, s) +  e(x, s)  =  0 \quad {\rm a.a.} \ (x, s) \in Q_1; 
\end{align}

\noindent $(iii)$   (optimality condition for $T_*$) 
\begin{align}\label{cm0.4}  
    \int_{Q_1}\varphi \big(- \Delta\zeta_* +  \alpha \zeta_* -v_*\big) dxds  = -1;
\end{align}

\noindent $(iv)$ (the complementary conditions)   $H(\zeta(1))  =  0$ and
\begin{align}
\label{a0.4}
    e(x, s)
    \begin{cases}
       \leq 0\quad  {\rm a.a.}  \   (x, s)\in Q_{a}\\
    \geq 0\quad  {\rm a.a.} \ (x, s)\in Q_{b}\\
     =0 \quad {\rm a.a.} \ (x, s)\in Q_{ab}.
    \end{cases}
\end{align}
Moreover, $v_* \in L^\infty(Q_1)  \cap L^2([0, 1], W^{1, s}(\Omega)) \cap H^1([0, 1], L^2(\Omega))$ for all $1 \le s < +\infty$. 
\end{proposition}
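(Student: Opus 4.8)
The plan is to cast $(P'')$ as the abstract program $\min_{z\in Z}T$ subject to $F(z)=0$, $H(z)\le 0$, $G(z)\in K_1$, and to invoke a Lagrange multiplier rule. First I would check that $F$, $H$ and $G$ are of class $C^1$ on $Z$ and compute their derivatives at $z_*=(T_*,\zeta_*,v_*)$; in particular
\begin{align*}
\partial_\zeta F(z_*)\delta\zeta=\big(\partial_s\delta\zeta-T_*\Delta\delta\zeta+\alpha T_*\delta\zeta,\ \delta\zeta(0)\big).
\end{align*}
By Lemma \ref{Lemma-stateEq} applied to this linear parabolic equation, $\partial_\zeta F(z_*)$ is a Banach-space isomorphism from $Y_1$ onto $E$, so the implicit function theorem lets me solve $F=0$ locally as $\zeta=\zeta(T,v)\in Y_1$. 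This reduces $(P'')$ to $\min_{(T,v)}T$ subject to $h(T,v):=H(\zeta(T,v)(1))\le 0$ and $v\in K_1$, which carries only one scalar inequality together with the convex cone constraint.

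For this reduced problem I would verify the constraint qualification and apply the Karush--Kuhn--Tucker theorem. The linearized Slater hypothesis $(H3)$, namely $-\partial_Th(T_*,v_*)>0$, supplies a feasible direction strictly decreasing $h$, which is exactly the Robinson/Mangasarian--Fromovitz condition here; hence there are multipliers $\mu\ge 0$ and $e$ in the normal cone to $K_1$ at $v_*$, with the objective multiplier normalized to $1$. The stationarity in $T$ then reads $1+\mu\,\partial_Th(T_*,v_*)=0$, so $(H3)$ forces $\mu=1/(-\partial_Th(T_*,v_*))>0$, and complementarity gives $H(\zeta_*(1))=0$. This is the step where $(H3)$ is essential and where $\mu>0$ is produced.

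Next I would introduce the adjoint state $\varphi$ solving the backward linear equation $-\partial_s\varphi-T_*\Delta\varphi+\alpha T_*\varphi=0$ in $Q_1$, $\varphi=0$ on $\Sigma_1$, $\varphi(1)=-\mu(\zeta_*(1)-y_\Omega)$, which is condition $(i)$, and express the derivatives of $h$ through $\varphi$ by the usual adjoint calculus (integration by parts in $s$, using $\varphi|_{\Sigma_1}=0$; the boundary term at $s=0$ merely fixes the unlisted multiplier of the initial condition). The $v$-stationarity then becomes $-T_*\varphi+e=0$, which is $(ii)$; in particular $e=T_*\varphi$ is represented by an $L^\infty$ function, so the a priori singular part of the $(L^\infty)^*$-multiplier is automatically absent. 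Rewriting the $T$-stationarity through $\varphi$ yields $(iii)$, and translating $e\in N_{K_1}(v_*)$ into pointwise form on $Q_a$, $Q_b$, $Q_{ab}$ gives the sign and complementarity conditions $(iv)$.

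Finally I would establish the regularity. Since $\zeta_*\in Y_1\hookrightarrow C(\bar Q_1)$, the terminal datum $-\mu(\zeta_*(1)-y_\Omega)$ lies in $H_0^1(\Omega)\cap W^{2-2/p,p}(\Omega)$, so applying the parabolic regularity of Lemma \ref{Lemma-stateEq} to the time-reversed equation places $\varphi$, and hence $e=T_*\varphi$, in $L^\infty(Q_1)\cap W^{1,1}_2(0,1;D,H)\cap C([0,1];H_0^1(\Omega))$. The stated regularity of $v_*$ I would read off from the state equation, $T_*v_*=\partial_s\zeta_*-T_*\Delta\zeta_*+\alpha T_*\zeta_*$, bootstrapping the regularity of $\zeta_*$ from $v_*\in L^\infty\subset L^q$ for every finite $q$. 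I expect this last step to be the main obstacle: closing the bootstrap to reach $v_*\in L^2(0,1;W^{1,s}(\Omega))\cap H^1(0,1;L^2(\Omega))$ for all $s<\infty$ requires sharp maximal parabolic estimates, and it must be reconciled with the bang-bang character that a $T$-only objective imposes on $v_*$.
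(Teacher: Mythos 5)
Your derivation of the multiplier system itself is sound but follows a different route from the paper: you reduce via the implicit function theorem and apply KKT directly, using $(H3)$ as a Mangasarian--Fromovitz/Robinson condition to get the qualified form and then reading $\mu=1/(-\partial_Th(T_*,v_*))>0$ off the $T$-stationarity. The paper instead imports a Fritz John system from \cite[Proposition 4.3]{KKR-2024-3} (with an extra multiplier $\lambda$ in front of the objective), shows $\mu=0$ forces all multipliers to vanish, and then normalizes $\int_\Omega\lambda\,dx=1$ via \cite[Lemma 3.2 (ii)]{Bon1}. Both give $(i)$--$(iv)$; your version is more self-contained, the paper's leans on the companion results. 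Two small inaccuracies in your write-up: the terminal datum $-\mu(\zeta_*(1)-y_\Omega)$ is only in $H_0^1(\Omega)$ (since $y_\Omega$ is merely assumed in $H_0^1(\Omega)$, not in $W^{2-2/p,p}(\Omega)$), which still suffices for $\varphi\in W^{1,1}_2(0,1;D,H)\hookrightarrow C([0,1],H_0^1(\Omega))$ by the $L^2$ parabolic regularity the paper cites from Evans; and the $L^\infty(Q_1)$ bound on $\varphi$ does not follow from that embedding alone and needs a separate argument (the paper inherits it from the cited proposition).

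The genuine gap is the last assertion, $v_*\in L^\infty(Q_1)\cap L^2(0,1;W^{1,s}(\Omega))\cap H^1(0,1;L^2(\Omega))$. Your plan to ``read it off'' from $T_*v_*=\partial_s\zeta_*-T_*\Delta\zeta_*+\alpha T_*\zeta_*$ is circular: the regularity of $\zeta_*$ is exactly what $v_*\in L^\infty\subset L^p$ produces, namely $\zeta_*\in W^{2,1}_p(Q_1)$ with $\Delta\zeta_*\in L^p(Q_1)$, so the right-hand side of that identity only returns $v_*\in L^p(Q_1)$ --- there is no surplus regularity to bootstrap. The extra smoothness of $v_*$ does not come from the state equation at all; the paper obtains it from the optimality system, arguing as in \cite[Proposition 2.3]{Neitzel-2012}, i.e.\ by exploiting the pointwise relation between $v_*$ and the adjoint state $e=T_*\varphi$ coming from $(ii)$ and $(iv)$ together with the established regularity $\varphi\in W^{1,1}_2(0,1;D,H)\cap C([0,1],H_0^1(\Omega))$. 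Your proposal as written does not reach this conclusion, and you correctly flag it yourself as the unresolved obstacle.
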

\begin{proof}
    By \cite[Proposition 4.3]{KKR-2024-3}, there exist Lagrange multipliers $\lambda, \mu \in \mathbb R_+$, $\varphi \in L^\infty(Q_1)  \cap L^2(0, 1; H^1_0(\Omega))$ and $e \in L^\infty(Q_1)$  not all are zero and  satisfy the following conditions: 
    \begin{align}
        &- \dfrac{\partial \varphi}{\partial s} - T_* \Delta\varphi + \alpha T_*\varphi = 0  \ {\rm in} \ Q_1, \quad \quad \varphi = 0 \ {\rm on} \ \Sigma_1,  \quad \quad  \varphi (1) = - \mu(\zeta_*(1) - y_\Omega);   \label{a1} \\
        &-  T_*\varphi(x, s) +  e(x, s)  =  0 \quad {\rm a.a.} \ (x, s) \in Q_1;   \label{a2} \\
        &\int_{Q_1}\varphi \big(- \Delta\zeta_* +  \alpha \zeta_* -v_*\big) dxds + \int_\Omega\lambda dx= 0;  \label{a3} \\
        &\mu H(\zeta(1))  =  0   \quad {\rm and} \quad e(x, s)\in  N([a, b],  v_*(x, s)) \ {\rm a.a.}\ (x, s)\in Q_1.   \label{a4} 
    \end{align}
    If $\mu = 0$ then from (\ref{a1}), $\varphi = 0$. From (\ref{a2}), $e = 0$ and it follows from (\ref{a3})  that $\lambda = 0$. This is impossible. Therefore, we have $\mu >0$. Combining this with (\ref{a4}) we obtain $H(\zeta(1))  =  0$, and the condition  $e(x, s)\in  N([a, b],  v_*(x, s)) \ {\rm a.a.}\ (x, s)\in Q_1$ in (\ref{a4}) implies (\ref{a0.4}). By the condition (\ref{a3}) above and \cite[Lemma 3.2, (ii)]{Bon1}, we deduce that $\int_\Omega\lambda dx = 1$.  Since $\zeta_*(1) \in H^1_0(\Omega)$, by regularity from \cite[Theorem 5, p. 360]{Evan}, we have $\varphi \in  W^{1, 1}_2(0, 1; D, H)  \hookrightarrow C([0, 1], H^1_0(\Omega))$. By (\ref{a2}), $e = T_*\varphi \in W^{1, 1}_2(0, 1; D, H)$. Finally, by same arguments as proof of Proposition 2.3 in \cite{Neitzel-2012}, we get $v_* \in L^\infty(Q_1)  \cap L^2([0, 1], W^{1, s}(\Omega)) \cap H^1([0, 1], L^2(\Omega))$ for any $1 \le s < +\infty$.
 \end{proof}

In order to deal with second-order sufficient conditions in this case, we need, for some $\beta > 0$,  a critical cone ${\mathcal C}_2^\beta[(T_*, \zeta_*,  v_*)]$ which consists of vectors $(T, \zeta, v) \in \mathbb R \times W^{1, 2}(0, 1; D, H) \times L^2(Q_1)$ satisfying the following conditions:
\begin{itemize}
     \item [$(c_1)$]  $T \le \beta (\|v\|_{L^2(Q_1)}  +  |T|)$; 
     \item [$(c_2)$]   $\dfrac{\partial \zeta}{\partial s} +T_*\big(-\Delta \zeta +   \alpha \zeta -v\big)+ T(-\Delta \zeta_* +  \alpha \zeta_* - v_*)=0, \quad    \zeta(0)=0$;
     \item [$(c_3)$]  $v(x, s) \ge 0$  a.a. $(x, s)\in Q_a$  and  $v(x, s) \le 0$  a.a. $(x, s)\in Q_b$.
\end{itemize}
The following proposition gives second-order sufficient conditions for locally optimal solutions to $(P'')$.
\begin{theorem}
    \label{SOSC-P''}
    Suppose $z_*  = (T_*, \zeta_*, v_*)\in\Phi''$ with $T_*>0$,  assumptions $(H1)$ and $(H2)$,  and  there exist multipliers $(\mu, \varphi, e)$   satisfying  conditions $(i)$-$(iv)$ of Proposition \ref{Pro-KKT-P''}. If there exist positive constants $\gamma > 0$ and $\beta  > 0$ such that 
   \begin{align} 
   \label{StrictSOC}  
    2 \int_{Q_1} T\varphi \Big(-\Delta \zeta +    \alpha \zeta - v\Big)dxds  +  \mu \int_\Omega \zeta^2(1) dx \ge  \gamma (\|v\|^2_{L^2(Q_1)}  + T^2) \quad \forall (T, \zeta, v) \in {\mathcal C}_2^\beta[(T_*, \zeta_*, v_*)], 
\end{align}
then  there exist numbers $\varrho > 0$ and $\kappa > 0$ such that 
    \begin{align}
    \label{m00}
        T \ge T_* + \dfrac{\kappa}{2} (\|v - v_*\|_{L^2(Q_1)}^2 + (T - T_*)^2)
    \end{align}
    for all $z = (T, \zeta, v) \in \Phi''$ satisfying $|T  -  T_*| < \varrho$. In particular, $z_*$ is a locally strongly optimal solution to the problem  $(P'')$.
\end{theorem}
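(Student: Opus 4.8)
The plan is to argue by contradiction, exploiting that the objective and all constraint maps are at most quadratic in $z=(T,\zeta,v)$, so the Lagrangian has an \emph{exact} second-order Taylor expansion. First I would introduce
$$\mathcal{L}(T,\zeta,v) = T + \int_{Q_1}\varphi\,F_1(T,\zeta,v)\,dxds + \mu\,H(\zeta(1)) + \int_{Q_1} e\,v\,dxds,$$
with the multipliers $(\mu,\varphi,e)$ of Proposition \ref{Pro-KKT-P''}, and record that conditions $(i)$–$(iv)$ say exactly that $D\mathcal{L}(z_*)$ annihilates every direction whose initial trace vanishes: $(iii)$ gives $\partial_T\mathcal{L}(z_*)=0$, $(ii)$ gives $\partial_v\mathcal{L}(z_*)=0$, and integrating by parts via the adjoint equation $(i)$ gives $\partial_\zeta\mathcal{L}(z_*)[\eta]=-\int_\Omega\varphi(0)\eta(0)\,dx$. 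Since $F_1$ is bilinear in $T$ and $(\zeta,v)$ while $H(\zeta(1))$ is quadratic, $\mathcal{L}$ is a quadratic polynomial; hence for any feasible $z$ with $\delta=z-z_*$ (which has $\delta\zeta(0)=0$ because $\zeta(0)=y_0=\zeta_*(0)$) the expansion
$$\mathcal{L}(z)-\mathcal{L}(z_*) = \frac12 D^2\mathcal{L}(z_*)[\delta,\delta] = \frac12\Big(2\,\delta T\!\int_{Q_1}\!\varphi(-\Delta\,\delta\zeta+\alpha\,\delta\zeta-\delta v)\,dxds + \mu\!\int_\Omega(\delta\zeta(1))^2\,dx\Big)$$
is exact, the right-hand side being precisely the quadratic form $Q$ appearing in \eqref{StrictSOC}. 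On the other hand, feasibility ($F_1(z)=0$, $H(\zeta(1))\le0$, $v\in K_1$), the complementarity signs $(iv)$, and $H(\zeta_*(1))=0$, $\mu>0$ give $\mathcal{L}(z)-\mathcal{L}(z_*)\le T-T_*$. Combining yields the master inequality $T-T_*\ge \frac12 D^2\mathcal{L}(z_*)[\delta,\delta]$.

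Next I would set up the contradiction. If \eqref{m00} fails for all $\varrho,\kappa>0$, there is a sequence $z_n=(T_n,\zeta_n,v_n)\in\Phi''$ with $z_n\ne z_*$, $T_n\to T_*$, and $T_n-T_*<\frac1n\rho_n^2$, where $\rho_n^2=\|v_n-v_*\|_{L^2(Q_1)}^2+(T_n-T_*)^2$. Setting $(\sigma_n,\xi_n,w_n)=\rho_n^{-1}(z_n-z_*)$ gives $\sigma_n^2+\|w_n\|_{L^2}^2=1$; moreover $\rho_n$ is bounded (as $v_n,v_*\in[a,b]$), and dividing the state equations for $\zeta_n,\zeta_*$ by $\rho_n$ shows $\xi_n$ solves the linearized equation \emph{with coefficient $T_n$} and $L^2$-bounded source $T_n w_n-\sigma_n(-\Delta\zeta_*+\alpha\zeta_*-v_*)$, hence is bounded in $W^{1,1}_2(0,1;D,H)$ by $L^2$ maximal parabolic regularity. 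Dividing the master inequality by $\rho_n^2$ and using homogeneity of $Q$ gives $\frac12 Q(\sigma_n,\xi_n,w_n)\le (T_n-T_*)/\rho_n^2<\frac1n$, so $\limsup_n Q(\sigma_n,\xi_n,w_n)\le0$.

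The decisive point, and the step I expect to be the main obstacle, is that $(\sigma_n,\xi_n,w_n)$ is \emph{not} in ${\mathcal C}_2^\beta$, since $\xi_n$ satisfies $(c_2)$ with $T_n$ rather than $T_*$; thus \eqref{StrictSOC} cannot be applied to it directly, and, with no Tikhonov term present, $Q$ is not coercive enough to turn weak convergence of $w_n$ into strong convergence. I would bypass this by a cone correction: let $\tilde\xi_n$ solve $(c_2)$ \emph{with $T_*$} for the same $(\sigma_n,w_n)$. Then $(\sigma_n,\tilde\xi_n,w_n)$ satisfies $(c_2)$ by construction, satisfies $(c_3)$ because $v_n,v_*\in[a,b]$ force $w_n\ge0$ on $Q_a$ and $w_n\le0$ on $Q_b$, and satisfies $(c_1)$ for large $n$ because $T_n-T_*<\frac1n\rho_n^2$ makes $\sigma_n$ at most $O(1/n)$ while $\sigma_n^2+\|w_n\|^2=1$ keeps $\|w_n\|$ bounded below. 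Hence $(\sigma_n,\tilde\xi_n,w_n)\in{\mathcal C}_2^\beta$, and \eqref{StrictSOC} gives the uniform lower bound $Q(\sigma_n,\tilde\xi_n,w_n)\ge\gamma(\|w_n\|^2+\sigma_n^2)=\gamma$.

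It remains to compare the corrected and uncorrected forms. Subtracting the two linearized equations, the error $d_n=\xi_n-\tilde\xi_n$ solves $\partial_s d_n-T_*\Delta d_n+\alpha T_* d_n=(T_n-T_*)(w_n+\Delta\xi_n-\alpha\xi_n)$ with $d_n(0)=0$; the right-hand side is $O(|T_n-T_*|)$ in $L^2(Q_1)$, so maximal regularity yields $d_n\to0$ in $W^{1,1}_2(0,1;D,H)$, whence $\Delta d_n\to0$ in $L^2(Q_1)$ and $d_n(1)\to0$ in $L^2(\Omega)$. Since $Q(\sigma_n,\tilde\xi_n,w_n)-Q(\sigma_n,\xi_n,w_n)$ involves $d_n$ only through the bounded factors $\sigma_n$, $\varphi$, and $\xi_n(1)+\tilde\xi_n(1)$, this difference tends to $0$. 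Therefore $\gamma\le Q(\sigma_n,\tilde\xi_n,w_n)=Q(\sigma_n,\xi_n,w_n)+o(1)\le o(1)$, i.e. $\gamma\le0$, contradicting $\gamma>0$. This establishes \eqref{m00} for suitable $\varrho,\kappa>0$; in particular $T\ge T_*$ whenever $|T-T_*|<\varrho$, so $z_*$ is a locally strongly optimal solution of $(P'')$ in the sense of Definition \ref{locally-strongly-optimal-solutions}$(ii)$.
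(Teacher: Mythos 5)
Your proof is correct, and it rests on the same central mechanism as the paper's: the exact quadratic expansion of the Lagrangian giving the master inequality $T-T_*\ge\tfrac12 D^2\mathcal L(z_*)[z-z_*]^2$, followed by a ``cone correction'' in which the true state increment (which solves the linearized equation with coefficient $T$, not $T_*$) is replaced by the solution of $(c_2)$ at $T_*$ for the same $(T-T_*,v-v_*)$, and the discrepancy --- which lives only in the $\zeta$-component and is driven by a source of size $O(|T-T_*|)$ --- is shown to contribute negligibly to the quadratic form. Your $d_n$ is exactly the paper's remainder $z_r=(0,\zeta_w-(\zeta-\zeta_*),0)$ after normalization. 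The packaging, however, is genuinely different. The paper argues directly: it fixes $\varrho$ small via an explicit smallness condition, and splits into two cases according to whether the corrected direction $w_z$ lies in $\mathcal C_2^\beta[z_*]$; when it does not, the failure of $(c_1)$ itself yields the quadratic growth with an explicit $\kappa\le\beta\min\big(\tfrac1{(b-a)|Q_1|^{1/2}},\tfrac1{\varepsilon_0}\big)$, and when it does, the cross terms are absorbed into $\gamma/2$. You instead argue by contradiction with a normalized sequence, and the contradiction hypothesis $T_n-T_*<\tfrac1n\rho_n^2$ forces $\sigma_n\to0$ so that $(c_1)$ holds automatically for large $n$; this eliminates the case split entirely, at the cost of producing no explicit constants $\varrho,\kappa$. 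Your observation that the absence of a Tikhonov term blocks the usual weak-limit coercivity argument, and that the normalization $\sigma_n^2+\|w_n\|^2=1$ circumvents it, is exactly the right diagnosis. The only points to make fully explicit in a write-up are the uniformity in $n$ of the maximal parabolic regularity constant for the operator $\partial_s-T_n\Delta+\alpha T_n$ (which holds because $T_n\to T_*>0$) and the boundedness of $-\Delta\zeta_*+\alpha\zeta_*-v_*$ in $L^2(Q_1)$ (which holds because $\zeta_*\in Y_1$); neither is a gap.
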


In the present setting, the Lagrange function has the following form
\begin{align}
\label{dfL}
    \mathcal L (T, \zeta, v, \varphi, \mu, e) = T  +  \int_{Q_1}\Big[\zeta(-\frac{\partial \varphi}{\partial s} - T \Delta \varphi) + (\alpha \zeta - v)T\varphi  + ev \Big]dxdt  +  \langle \varphi(1), \zeta(1)  \rangle  + \mu H(\zeta(1)).  
\end{align}
With the Lagrange function defined in (\ref{dfL}),  we can show that conditions $(i)$-$(iv)$ of Proposition \ref{Pro-KKT-P''} imply  $D_z\mathcal L (z_*, \varphi, \mu, e) =   0$. Also, the condition (\ref{StrictSOC}) can be rewritten as 
\begin{align}
  \label{StrictSOC.1}  
    D^2_{zz}\mathcal L (z_*, \varphi, \mu, e)[d, d] \ge  \gamma (\|v\|^2_{L^2(Q_1)} + T^2),   \quad \forall d =  (T, \zeta, v) \in {\mathcal C}_2^\beta[(T_*, \zeta_*, v_*)]. 
\end{align}
Define $\mathcal S[z_*] := \{(T, \zeta, v) \in \mathbb R \times Y_1 \times U_1: (T, \zeta, v) \ \  {\rm satisfies}\ \  (c_2)\}$. We need the following lemma.
\begin{lemma}
\label{B1}
    The mapping $\widehat G : \mathcal S[z_*] \to \mathbb R \times U_1$, $(T, \zeta, v) \mapsto \widehat G(T, \zeta, v) = (T, v)$, is bijective.
\end{lemma}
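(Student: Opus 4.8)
The plan is to recognize that membership in $\mathcal S[z_*]$ is governed solely by the linearized state equation $(c_2)$, which, once the pair $(T, v)$ is prescribed, collapses to a linear parabolic equation for $\zeta$ alone with a fixed source term and homogeneous initial datum. Concretely, I would rewrite $(c_2)$ as
\begin{align*}
    \frac{\partial \zeta}{\partial s} - T_* \Delta \zeta + \alpha T_* \zeta = g(T, v), \quad \zeta(0) = 0, \quad \text{with} \quad g(T, v) := T_* v - T\big(- \Delta \zeta_* + \alpha \zeta_* - v_*\big).
\end{align*}
Thus proving that $\widehat G$ is bijective amounts to showing that for every $(T, v) \in \mathbb R \times U_1$ this equation has a unique solution $\zeta \in Y_1$: surjectivity is the existence of such a $\zeta$ (giving a preimage $(T, \zeta, v)$ of $(T,v)$), while injectivity follows because if $\widehat G(T, \zeta_1, v) = \widehat G(T, \zeta_2, v)$, then $\zeta_1$ and $\zeta_2$ solve one and the same equation and hence coincide by uniqueness.

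The crucial point is that the source $g(T, v)$ belongs to $L^p(Q_1)$ with $p$ as in \eqref{Dimension1}. First I would verify this: since $v, v_* \in L^\infty(Q_1) \hookrightarrow L^p(Q_1)$, and because $z_* = (T_*, \zeta_*, v_*) \in \Phi''$ forces $\zeta_* \in Y_1$, the defining condition of $Y_1$ supplies $\Delta \zeta_* \in L^p(Q_1)$ together with $\zeta_* \in W^{2,1}_p(Q_1) \hookrightarrow L^p(Q_1)$. Hence $-\Delta \zeta_* + \alpha \zeta_* - v_* \in L^p(Q_1)$ and consequently $g(T, v) \in L^p(Q_1)$ for every $(T, v)$. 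With this integrability secured, I would invoke the well-posedness of the state equation. To match the hypotheses of Lemma \ref{Lemma-stateEq} exactly, I would rescale time via $t = T_* s$ (legitimate since $T_* > 0$) and put $\tilde y(x, t) = \zeta(x, t/T_*)$; the equation transforms into $\partial_t \tilde y - \Delta \tilde y + \alpha \tilde y = \tilde u$ on $Q_{T_*}$ with $\tilde u(x, t) = T_*^{-1} g(x, t/T_*) \in L^p(Q_{T_*})$ and $\tilde y(0) = 0$, the latter trivially satisfying $(H1)$. Lemma \ref{Lemma-stateEq} then yields a unique $\tilde y \in Y_{T_*}$, and transforming back produces a unique $\zeta \in Y_1$ solving $(c_2)$, which simultaneously establishes both surjectivity and injectivity.

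The only delicate step is the $L^p$-integrability of the source, and within it the membership $\Delta \zeta_* \in L^p(Q_1)$; this is precisely the reason the space $Y_1$ was equipped with the additional condition $\Delta \zeta \in L^p(Q_1)$, so nothing beyond invoking the definition is required. Handling the coefficient $T_*$ in front of the spatial operator is purely a matter of the time rescaling already used to pass from $(P')$ to $(P'')$, and therefore poses no genuine obstacle; the argument reduces entirely to the existence and uniqueness theory encapsulated in Lemma \ref{Lemma-stateEq}.
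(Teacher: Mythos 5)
Your proposal is correct and follows essentially the same route as the paper: the paper likewise rewrites $(c_2)$ as the linear parabolic equation $\frac{\partial \zeta}{\partial s} - T_*\Delta \zeta + \alpha T_* \zeta = T_* v + T(\Delta \zeta_* - \alpha \zeta_* + v_*)$ with $\zeta(0)=0$ and invokes Lemma \ref{Lemma-stateEq} to obtain a unique solution $\zeta(T,v) \in Y_1$, which gives bijectivity at once. Your additional checks (that the source lies in $L^p(Q_1)$ via $\Delta\zeta_* \in L^p(Q_1)$ from the definition of $Y_1$, and the time rescaling to match the lemma's hypotheses) are details the paper leaves implicit.
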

\begin{proof}
    For each $(T, v) \in \mathbb R \times U_1$, apply Lemma \ref{Lemma-stateEq}, the following equation 
    \begin{align}
        \frac{\partial \zeta}{\partial s}  - T_*\Delta \zeta + \alpha T_* \zeta = T_* v + T(\Delta \zeta_* - \alpha \zeta_* + v_*), \quad \zeta(0) = 0
    \end{align}
    has a unique solution $\zeta = \zeta(T, v) \in Y_1$. This implies that there exists unique $z = (T, \zeta, v) = (T, \zeta(T, v), v) \in \mathcal S[z_*]$ such that $(T, v) = \widehat G(z)$. Thus $\widehat G$ is bijective. 
\end{proof}

\noindent{\bf Proof of Theorem \ref{SOSC-P''}}.    Let $\varepsilon_0 \in (0, T_*)$.  Take $\varrho \in (0, \varepsilon_0]$ and $z = (T, \zeta, v) \in \Phi''$ such that $|T  -  T_*| < \varrho$.  Since $(T - T_*, v  - v_*) \in \mathbb R \times U_1$, by Lemma \ref{B1}, there exists $w_z  =  (T_w, \zeta_w, v_w) = (T_w, \zeta(T_w, v_w), v_w) \in \mathcal S[z_*]$ such that 
\begin{align}
    T_w =  T  - T_* \quad {\rm and} \quad v_w = v - v_*. 
\end{align}
Define $z_r = w_z  - (z  - z_*)$, that is
\begin{align}
z_r  = (T_r, \zeta_r, v_r) := \Big(T_w -  (T - T_*), \zeta_w  -  (\zeta - \zeta_*), v_w  - (v - v_*)\Big) = (0, \zeta_w  -  (\zeta - \zeta_*), 0).
\end{align}
Since $z, z_* \in \Phi''$, $F(z) = F(z_*) = 0$. We now apply the Taylor expansion for the functional $F$ at the point  $z_*$, there is $\widehat z = z_* + \theta(z - z_*)$, $\theta \in [0, 1]$ such that 
\begin{align}
    0 = F(z) - F(z_*) = D_zF(z_*)(z - z_*)  + \frac{1}{2}D^2_{zz}F(\widehat z)(z - z_*)^2, 
\end{align}
which implies that
\begin{align}
    - D_zF(z_*)(z - z_*)  =  \frac{1}{2}D^2_{zz}F(\widehat z)(z - z_*)^2. 
\end{align}
Since $w_z \in \mathcal S$, $D_zF(z_*)(w_z)  =  0$. Hence, we have 
\begin{align}
    D_zF(z_*)(z_r) = - D_zF(z_*)(w_z  - z_r)  = - D_zF(z_*)(z - z_*)  =  \frac{1}{2}D^2_{zz}F(\widehat z)(z - z_*)^2, 
\end{align}
which implies that 
\begin{align}
    \dfrac{\partial \zeta_r}{\partial s} - T_*\Delta \zeta_r +   \alpha T_* \zeta_r  = (T - T_*)\Big(-\Delta (\zeta - \zeta_*) + \alpha(\zeta - \zeta_*) - (v - v_*)\Big)  , \quad    \zeta_r(0)=0. 
\end{align}
 Subtracting the equations satisfied by $\zeta_* = \zeta_*(T_*, v_*)$ and $\zeta = \zeta(T, v)$ we have 
 \begin{align}
     \label{m0.1}
     \frac{\partial (\zeta_* - \zeta)}{\partial s} -  T_*\Delta (\zeta_* - \zeta) + \alpha T_*(\zeta_* - \zeta)  = T_*(v_*  - v) + (T_*  - T)[v + \Delta \zeta - \zeta], \quad  (\zeta_* - \zeta)(0)  = 0.
 \end{align}
 By regularity from \cite[Theorem 5, p. 360]{Evan}, there exist constants $c_1, c_2, c_3, c_4, c_5, c_6, c_7 > 0$ such that
\begin{align}
    \|v + \Delta \zeta - \zeta\|_{L^2(Q_1)}  &=  \frac{1}{T}  \|\frac{\partial \zeta}{\partial s}\|_{L^2(Q_1)}  \le \frac{1}{T_* -  \varepsilon_0}  \|\frac{\partial \zeta}{\partial s}\|_{L^2(Q_1)}  \nonumber \\    
    &\le \frac{1}{T_* -  \varepsilon_0}c_1(T\|v\|_{L^\infty(Q_1)} + \|y_0\|_{H^1_0(\Omega)})   \nonumber \\
    &\le \frac{1}{T_* -  \varepsilon_0} c_1\Big((T_*  + \varepsilon_0){\rm max}(|a|, b) + \|y_0\|_{H^1_0(\Omega)}\Big)  =: c_2,   \label{m0.2}  
\end{align}
\begin{align}
    \|\zeta   -  \zeta_*\|_{C([0, 1], H^1_0(\Omega))}  \le c_3 \|T_*(v_*  - v) + (T_*  - T)[v + \Delta \zeta - \zeta]\|_{L^2(Q_1)}   \le c_4(\|v - v_*\|_{L^2(Q_1)} + |T  -  T_*|)   \label{m0.3}
\end{align}
and
\begin{align}
    \|-\Delta (\zeta - \zeta_*) + \alpha(\zeta - \zeta_*) &- (v - v_*)\|_{L^2(Q_1)} \nonumber \\ 
    &\le    \frac{1}{T_*}\Big(\|\frac{\partial (\zeta_* - \zeta)}{\partial s}\|_{L^2(Q_1)}  +  \|(T_*  - T)[v + \Delta \zeta - \zeta]\|_{L^2(Q_1)}\Big)  \nonumber \\ &\le  \frac{1}{T_*}\Big(c_3 \|T_*(v_*  - v) + (T_*  - T)[v + \Delta \zeta - \zeta]\|_{L^2(Q_1)} + c_2|T  - T_*|\Big) \nonumber \\  
    &\le c_5(\|v - v_*\|_{L^2(Q_1)} + |T  -  T_*|),    \label{m0.4}
\end{align}
\begin{align}
    \|\zeta_r\|_{C([0, 1], H^1_0(\Omega))}  +   \|\frac{\partial \zeta_r}{\partial s}\|_{L^2(Q_1)} &\le c_6|T - T_*|\|-\Delta (\zeta - \zeta_*) + \alpha(\zeta - \zeta_*) - (v - v_*)\|_{L^2(Q_1)}  \nonumber \\
    &\le c_6(\|v  - v_*\|_{L^2(Q_1)}  +  |T - T_*|)\|-\Delta (\zeta - \zeta_*) + \alpha(\zeta - \zeta_*) - (v - v_*)\|_{L^2(Q_1)}  \nonumber \\
    &\le c_5c_6(\|v  - v_*\|_{L^2(Q_1)}  +  |T - T_*|)^2  \le 2 c_5c_6(\|v  - v_*\|^2_{L^2(Q_1)}  +  |T - T_*|^2),   \label{m0.5}
\end{align}
\begin{align}
    \|\zeta_r\|_{C([0, 1], H^1_0(\Omega))}      &\le c_6|T - T_*|\|-\Delta (\zeta - \zeta_*) + \alpha(\zeta - \zeta_*) - (v - v_*)\|_{L^2(Q_1)}  \nonumber \\
    &\le c_6|T - T_*|c_5(\|v - v_*\|_{L^2(Q_1)} + |T  -  T_*|)  \nonumber \\ 
    &\le c_5c_6 \varrho(\|v - v_*\|_{L^2(Q_1)} + |T  -  T_*|)   
    \label{m0.51}
\end{align}
and
\begin{align}
    &\|-\Delta \zeta_r +  \alpha \zeta_r\|_{L^2(Q_1)}  \le \frac{1}{T_*}\Big(\|\frac{\partial \zeta_r}{\partial s}\|_{L^2(Q_1)}  +   |T 
 - T_*|\|-\Delta (\zeta - \zeta_*) + \alpha(\zeta - \zeta_*) - (v - v_*)\|_{L^2(Q_1)}\Big) \nonumber \\
 &\le \frac{1}{T_*}\Big(\|\frac{\partial \zeta_r}{\partial s}\|_{L^2(Q_1)}  +   (\|v  - v_*\|_{L^2(Q_1)}  +  |T 
 - T_*|)\|-\Delta (\zeta - \zeta_*) + \alpha(\zeta - \zeta_*) - (v - v_*)\|_{L^2(Q_1)}\Big) \nonumber \\
 &\le \frac{1}{T_*}\Big(2 c_5c_6(\|v  - v_*\|^2_{L^2(Q_1)}  +  |T - T_*|^2) +  2 c_5(\|v  - v_*\|^2_{L^2(Q_1)}  +  |T - T_*|^2)\Big) \nonumber \\
 &=: c_7 (\|v  - v_*\|^2_{L^2(Q_1)}  +  |T - T_*|^2).    \label{m0.6}
\end{align}
Let us choose $\varrho \in (0, \varepsilon_0]$ such that
\begin{align}
    %|Q_1|^\frac{1}{2}\varrho \le 1,   \label{m0.7}  \\
    \varrho\Big(4c_7|Q_1|^{\frac{1}{2}}\|\varphi\|_{L^\infty(Q_1)}  +  2 \mu  c_5c_6(c_5c_6\varrho  + 2c_4)\Big)  \le \frac{\gamma}{2}.  \label{m0.8}
\end{align}
With $\varrho$ as above, take any $z = (T, \zeta, v) \in \Phi''$ such that $|T  -  T_*| < \varrho$, we consider the following cases.

\noindent {\bf{Case 1: $w_z \in \mathcal C _2^\beta[z_*]$.}} From facts $F(z) = F(z_*) = 0$, $\mu > 0$ and  $e \in N(K_1, v_*)$, we deduce that
\begin{align}
    \label{m1}
    T - T_* \ge \mathcal L (z, \varphi, \mu, e)  - \mathcal L (z_*, \varphi, \mu, e). 
\end{align}
By a Taylor expansion of the Lagrange function $z \mapsto \mathcal L (z, \varphi, \mu, e)$ at $z_*$, and recall that $D_z\mathcal L (z_*, \varphi, \mu, e) = 0$, there exist some $\widetilde z = z_* + \widehat \theta(z - z_*)$ with measurable function $0 \le \widehat \theta(x, s) \le 1$ such that 
\begin{align}
    \label{m2}
    T - T_* \ge \frac{1}{2} D^2_{zz}\mathcal L (\widetilde z, \varphi, \mu, e)(z - z_*)^2.
\end{align}
By the inequality $a \ge b - |a - b|$ for all real numbers $a, b$, we have 
\begin{align}
    \label{m3}
    &D^2_{zz}\mathcal L (\widetilde z, \varphi, \mu, e)(z - z_*)^2  \nonumber \\
    &\ge D^2_{zz}\mathcal L (z_*, \varphi, \mu, e)(z - z_*)^2  - |D^2_{zz}\mathcal L (\widetilde z, \varphi, \mu, e)(z - z_*)^2  -  D^2_{zz}\mathcal L (z_*, \varphi, \mu, e)(z - z_*)^2| \nonumber \\
    &= D^2_{zz}\mathcal L (z_*, \varphi, \mu, e)(z - z_*)^2  -  0 =  D^2_{zz}\mathcal L (z_*, \varphi, \mu, e)(z - z_*)^2. 
\end{align}
From (\ref{m2}) and (\ref{m3}), we get 
\begin{align}
    \label{m4}
    T - T_* \ge \frac{1}{2} D^2_{zz}\mathcal L (z_*, \varphi, \mu, e)(z - z_*)^2.
\end{align}
Since $z - z_* = w_z  - z_r$ and the matrix $D^2_{zz}\mathcal L (z_*, \varphi, \mu, e)$ is symmetric, 
\begin{align}
    \label{m5}
    D^2_{zz}\mathcal L (z_*, \varphi, \mu, e)(z - z_*)^2  &=  D^2_{zz}\mathcal L (z_*, \varphi, \mu, e)(w_z  - z_r)^2  \nonumber \\
    &= D^2_{zz}\mathcal L (z_*, \varphi, \mu, e)w_z^2  - \Big[2 D^2_{zz}\mathcal L (z_*, \varphi, \mu, e)(w_z, z_r)  -  D^2_{zz}\mathcal L (z_*, \varphi, \mu, e)z_r^2\Big]  \nonumber \\
    &= D^2_{zz}\mathcal L (z_*, \varphi, \mu, e)w_z^2  - D^2_{zz}\mathcal L (z_*, \varphi, \mu, e)(2w_z  -  z_r, z_r)  \nonumber \\
    &\ge D^2_{zz}\mathcal L (z_*, \varphi, \mu, e)w_z^2  - \Big|D^2_{zz}\mathcal L (z_*, \varphi, \mu, e)(2w_z  -  z_r, z_r)\Big|. 
\end{align}
Since $w_z \in \mathcal C _2^\beta[z_*]$, we have  from (\ref{StrictSOC.1}) that 
\begin{align}
    \label{m6}
    D^2_{zz}\mathcal L (z_*, \varphi, \mu, e)w_z^2 \ge \gamma (\|v_w\|^2_{L^2(Q_1)} + T_w^2)  = \gamma (\|v - v_*\|^2_{L^2(Q_1)}  +  |T - T_*|^2). 
\end{align}
Also, $(2w_z  -  z_r, z_r)  = \Big(2T_w, \zeta_r + 2(\zeta - \zeta_*) , 2v_w\Big)$ and $z_r = (0, \zeta_r, 0)$. Hence we have
\begin{align}
    \label{m7}
    &\Big|D^2_{zz}\mathcal L (z_*, \varphi, \mu, e)(2w_z  -  z_r, z_r)\Big|  \nonumber \\
    &= \Big|4 \int_{Q_1} \varphi T_w (-\Delta \zeta_r + \alpha \zeta_r) dxds  + \mu \int_\Omega \Big(\zeta_r(1) + 2[\zeta(1) - \zeta_*(1)]\Big)\zeta_r(1) dx \Big|  \nonumber \\
    &\le 4 \int_{Q_1} \Big|\varphi T_w (-\Delta \zeta_r + \alpha \zeta_r)\Big| dxds  + \mu \int_\Omega \Big|\Big(\zeta_r(1) + 2[\zeta(1) - \zeta_*(1)]\Big)\zeta_r(1)\Big|dx  \nonumber \\
    &\le 4\varrho|Q_1|^{\frac{1}{2}}\|\varphi\|_{L^\infty(Q_1)}\|-\Delta \zeta_r + \alpha \zeta_r\|_{L^2(Q_1)}  + \mu \Big(\|\zeta_r(1)\|_{L^2(\Omega)} + 2\|\zeta(1) - \zeta_*(1)\|_{L^2(\Omega)}\Big)\|\zeta_r(1)\|_{L^2(\Omega)}  \nonumber \\
    &\le 4\varrho|Q_1|^{\frac{1}{2}}\|\varphi\|_{L^\infty(Q_1)}\|-\Delta \zeta_r + \alpha \zeta_r\|_{L^2(Q_1)}  \nonumber \\
    &+ \mu \Big(\|\zeta_r\|_{C([0, 1], H^1_0(\Omega))} + 2\|\zeta - \zeta_*\|_{C([0, 1], H^1_0(\Omega))}\Big)\|\zeta_r\|_{C([0, 1], H^1_0(\Omega))}. 
\end{align}
From (\ref{m7}), (\ref{m0.6}), (\ref{m0.51}), (\ref{m0.5}) and (\ref{m0.3}) we have 
\begin{align}
    \label{m8}
    &\Big|D^2_{zz}\mathcal L (z_*, \varphi, \mu, e)(2w_z  -  z_r, z_r)\Big|  \nonumber \\
    &\le 4c_7\varrho|Q_1|^{\frac{1}{2}}\|\varphi\|_{L^\infty(Q_1)} (\|v  - v_*\|^2_{L^2(Q_1)}  +  |T - T_*|^2)   \nonumber \\
    &+  \mu \Big(c_5c_6\varrho(\|v - v_*\|_{L^2(Q_1)} + |T  -  T_*|)  +  2 c_4(\|v - v_*\|_{L^2(Q_1)} + |T  -  T_*|)\Big)c_5c_6\varrho(\|v - v_*\|_{L^2(Q_1)} + |T  -  T_*|)  \nonumber \\
    &\le \varrho\Big(4c_7|Q_1|^{\frac{1}{2}}\|\varphi\|_{L^\infty(Q_1)}  +  2 \mu  c_5c_6(c_5c_6\varrho  + 2c_4)\Big)(\|v  - v_*\|^2_{L^2(Q_1)}  +  |T - T_*|^2) 
\end{align}
Combining this with (\ref{m0.8}), we have 
\begin{align}
    \label{m9}
    \Big|D^2_{zz}\mathcal L (z_*, \varphi, \mu, e)(2w_z  -  z_r, z_r)\Big|   \le \frac{\gamma}{2}  (\|v  - v_*\|^2_{L^2(Q_1)}  +  |T - T_*|^2).  
\end{align}
From (\ref{m5}), (\ref{m6}) and (\ref{m9}) we get 
\begin{align}
    \label{m10}
    D^2_{zz}\mathcal L (z_*, \varphi, \mu, e)(z - z_*)^2   &\ge (\gamma - \frac{\gamma}{2})  (\|v  - v_*\|^2_{L^2(Q_1)}  +  |T - T_*|^2)  \nonumber \\
    &= \frac{\gamma}{2}  (\|v  - v_*\|^2_{L^2(Q_1)}  +  |T - T_*|^2).
\end{align}
It follows from (\ref{m4}) and (\ref{m10}) that 
\begin{align}
    \label{m11}
    T  - T_* \ge  \frac{\gamma}{4}  (\|v  - v_*\|^2_{L^2(Q_1)}  +  |T - T_*|^2),
\end{align}
which implies that the condition (\ref{m00}) holds true with $\kappa \in (0,  \frac{\gamma}{2}]$.

\noindent {\bf{Case 2: $w_z \notin \mathcal C _2^\beta[z_*]$.}} We have $w_z \in \mathcal S[z_*]$ and it's easy to see that $v_w = v  - v_* \ge 0$ a.e. on $Q_a$ and $v_w = v  - v_* \le 0$ a.e. on $Q_b$. By the definition of the cone $\mathcal C _2^\beta[z_*]$, we deduce that 
\begin{align}
    T - T_* = T_w &> \beta (\|v_w\|_{L^2(Q_1)}  +  |T_w|)  \nonumber \\  
    &= \beta (\|v - v_*\|_{L^2(Q_1)}  +  |T - T_*|).  \nonumber \\  
    &\ge \beta \Big(\frac{1}{(b - a)|Q_1|^{\frac{1}{2}}}\|v - v_*\|^2_{L^2(Q_1)}  +  \frac{1}{\varepsilon_0}|T - T_*|^2\Big).  \nonumber \\ 
    &\ge \beta {\rm min} \Big(\frac{1}{(b - a)|Q_1|^{\frac{1}{2}}}, \frac{1}{\varepsilon_0}\Big) (\|v - v_*\|^2_{L^2(Q_1)}  +  |T - T_*|^2),  
\end{align}
which implies that $z_*$ is a locally strongly optimal solution
to the problem, and in this case the condition (\ref{m00}) holds true for all $\kappa$ such that $0 < \kappa \le \beta {\rm min} \Big(\frac{1}{(b - a)|Q_1|^{\frac{1}{2}}}, \frac{1}{\varepsilon_0}\Big)$.

\noindent The proof of the theorem is complete.
$\hfill\square$

\section{ Numerical approximation for $(P'')$}

In this section we study the numerical discretization of $(P'')$ by discontinuous Galerkin finite element methods. For this, we will further assume that $\Omega$  is convex. We will discretize both the state and the control using functions that are continuous piecewise linear in space and piecewise constant in time. To this aim,  associated with a parameter $h$,  we will consider, cf. Brenner \& Scott  \cite[Definition (4.4.13)]{0.7},   a quasi-uniform family of triangulations $\{\mathcal {K}_{h}\}_{h>0}$ of $\bar \Omega$ and a quasi-uniform family of partitions of size $\tau$ of $[0, 1]$,  that is
$$0=t_0<t_1<\cdots <t_{N_\tau }=  1.$$
We will denote by  
\begin{comment}
\begin{align*}
\Omega _h = \mathrm{int}\; \cup _{K\in {\mathcal {K}}_h} K,  \quad Q_h = \Omega _h \times (0,  1),
\end{align*}
\end{comment}
$N_h$ and $N_{I, h}$ the number of nodes and interior nodes of  ${\mathcal {K}}_{h}, I_j=(t_{j-1},t_j), \tau _j = t_j-t_{j-1}$, $\tau = \max \{\tau _j:\,1\le j\le N_\tau \}$, the discretization parameter $h$ as the cellwise constant function $h|_K = h_K$ with diameter $h_K$ of the cell $K$ and set $h = {\rm max}h_K$, 
$$\sigma := (\tau, h).$$ 
Following Casas,  Kunisch and Mateos \cite[Assumption 4]{Casas-2023-2},  we need the following assumption. 

\noindent $(H4)$ \ There exist positive constants $\theta_1, \theta_2, \theta_3, \theta_4, \theta_5, c_\Omega, C_\Omega > 0$ which are independent of $\tau$ and $h$ such that
\begin{align}
    \tau_j \ge \theta_1 \tau^{\theta_2}, \ \tau \le \theta_3\tau_j \ {\rm {for \ all}} \ j = 1, 2, ..., N_{\tau} \quad {\rm and} \quad c_{\Omega} h^{\theta_4} \le \tau \le C_{\Omega} h^{\theta_5}. 
\end{align}

\subsection{Approximation of the state equation}

Now we consider the finite dimensional spaces
\begin{align}
Y_h = \{ {\zeta}_h\in C(\bar{\Omega }):\ {\zeta}_{h|K}\in P_1(K)\ \forall K\in \mathcal {K}_h,\ \zeta_h\equiv 0 \text{ in } \bar{\Omega }{\setminus }\Omega _h\},
\end{align}
where $P_1(K)$ is the space of polynomials of degree less or equal than $1$ on the element $K$, and 
\begin{align}
 \mathcal {Y}_\sigma =\{\zeta_\sigma \in L^2(0, 1;Y_h): \zeta_{\sigma |I_j}\in Y_h\ \forall j =1,\ldots ,N_{\tau }\}. 
\end{align}
Then each element $\zeta_\sigma$ of  $\mathcal {Y}_\sigma $  can be written as
\begin{align}
\zeta_\sigma = \sum _{j=1}^{N_\tau }\zeta_{ {h,j}}\chi _j = \sum _{j=1}^{N_\tau }\sum _{i=1}^{N_{I,h}}\zeta_{i,j} e_i \chi _j,  
\end{align}
where  $\zeta_{h, j} \in Y_h$,  for $j =  1, 2, ..., N_\tau$, $\zeta_{i, j} \in \mathbb R$ for $i =  1, 2, ..., N_{I, h}$ and $j =  1, 2, ..., N_\tau$,   $\{e_i\}_{i = 1}^{N_{ {I,h}}}$  is the nodal basis associated to the interior nodes $\{x_i\}_{i = 1}^{N_{ {I,h}}}$ of the triangulation and  $\chi _j$ denotes the characteristic function of the interval  $I_j = (t_{j-1},t_j)$.   For every $T > 0$ and $v\in {L^2(Q_1)}$, we define its associated discrete state as the unique element $\zeta_\sigma =  \zeta_\sigma(T, v) \in \mathcal {Y}_\sigma$ such that for $j = 1,\ldots ,N_\tau$, 
\begin{align}
&\int _{\Omega} (\zeta_{h,j}-  \zeta_{h,j-1}) z_h dx   +   {\tau _j} T a( \zeta_{h,j},z_h)  +  \int _{I_j}\int _{\Omega}  \alpha T \zeta_{h,j} z_h dx ds  = \int _{I_j}\int _{\Omega} T v z_h dx ds, \quad    \    \forall z_h\in Y_h    \label{ds1}\\
&\int _{\Omega} \zeta_{h,0} z_h dx = \int _{\Omega} y_{0} z_h dx,  \quad   \ \forall z_h\in Y_h  \label{ds2}
\end{align}
  where 
\begin{align}
 a(\zeta, z) =  \int _{\Omega}\sum _{i,j=1}^N \partial _{x_i} \zeta \partial _{x_j} zdx,   \quad  {\forall \zeta, z \in H^1(\Omega)}.  
\end{align}
This scheme can be interpreted as an implicit Euler discretization of the system of ordinary differential equations obtained after spatial finite element discretization. The proof of existence and uniqueness of a solution of (\ref{ds1})-(\ref{ds2}) is standard by using Brouwer’s fixed point theorem.  Moreover, by a similar way as the work of E. Casas, K. Kunisch and M. Mateos \cite[Theorem 3.1]{Casas-2023-2}, we deduce that  the system (\ref{ds1})-(\ref{ds2}) realizes an approximation of the state equation (\ref{P2.1})-(\ref{P3.1}) in the following sense.
\begin{lemma}
\label{Lemma-stateEq-d}
Suppose that $(H1)$ and $(H4)$ are valid. Then for each $T>0$ and  $v\in L^p(Q_1)$ with $p$ satisfying \eqref{Dimension1}, the discrete state equation (\ref{ds1})-(\ref{ds2}) has a unique  solution    $\zeta_\sigma = \zeta_\sigma(T, v) \in {\mathcal Y}_\sigma$ and there exist positive constant $h_0, \tau_0, \delta_0, c_1, c_2 > 0$ which are independent of $T$ and $v$ such that for every $\tau < \tau_0$ and $h < h_0$
\begin{align}
    &\|\zeta(T, v)  -   \zeta_{\sigma}(T, v)\|_{L^2(Q_1)}   \le c_1 \Big(T\|v\|_{L^p(Q_1)} + C(y_0)\Big)(\tau + h^2), \label{KeyInq1} \\
    &\|\zeta(T, v)  -   \zeta_{\sigma}(T, v)\|_{L^\infty(Q_1)}   \le c_2 \Big(T\|v\|_{L^p(Q_1)} + C(y_0)\Big)|{\rm log}h|^3 h^{\delta_0},  \label{KeyInq2} 
\end{align} 
where $C(y_0) := \|y_0\|_{C^{0, \alpha}(\bar \Omega)} + \|y_0\|_{H_0^1(\Omega)}$. 
\end{lemma}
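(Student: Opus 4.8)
The plan is to follow the strategy of Casas, Kunisch and Mateos \cite[Theorem 3.1]{Casas-2023-2}, adapting each step to account for the time-scaling parameter $T$ that multiplies the spatial operator in \eqref{P2.1}. First I would establish well-posedness of the discrete scheme. Since \eqref{ds1} is linear, at each time level $j$ one is led, given $\zeta_{h,j-1}$, to solve for $\zeta_{h,j} \in Y_h$ the problem $b_j(\zeta_{h,j}, z_h) = \ell_j(z_h)$ for all $z_h \in Y_h$, where $b_j(\zeta, z) := \int_\Omega \zeta z\, dx + \tau_j T\, a(\zeta, z) + \alpha \tau_j T \int_\Omega \zeta z\, dx$ and $\ell_j(z_h) = \int_\Omega \zeta_{h,j-1} z_h\, dx + \int_{I_j}\int_\Omega T v z_h\, dx\, ds$. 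Because $\tau_j, T > 0$ and $\alpha \ge 0$, the form $b_j$ is symmetric and coercive on $Y_h$ with $b_j(\zeta, \zeta) \ge |\zeta|^2$, so the Lax--Milgram lemma (or Brouwer's theorem, as indicated before the statement) yields a unique $\zeta_{h,j}$; stepping through $j = 1, \dots, N_\tau$ produces the unique $\zeta_\sigma(T, v) \in \mathcal{Y}_\sigma$.

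For the error estimates I would first invoke Lemma \ref{Lemma-stateEq} to obtain the continuous solution $\zeta = \zeta(T, v) \in Y_1$ together with the a priori bound \eqref{KeyInq0}, which already exhibits the required right-hand side structure $T\|v\|_{L^p(Q_1)} + C(y_0)$; the higher regularity encoded in $Y_1$ (in particular $\zeta \in W^{1,1}_2(0,1;D,H) \cap W^{2,1}_p(Q_1)$) is what makes the optimal interpolation rates available. I would then split the error as $\zeta - \zeta_\sigma = (\zeta - \Pi_\sigma \zeta) + (\Pi_\sigma \zeta - \zeta_\sigma)$, where $\Pi_\sigma$ is the natural projection onto $\mathcal{Y}_\sigma$ (Ritz projection in space, $L^2$-projection in time). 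The interpolation part is controlled by standard estimates on the quasi-uniform family $\{\mathcal{K}_h\}$ and partitions of size $\tau$, giving $O(\tau + h^2)$ in $L^2$; the discrete part $\Pi_\sigma \zeta - \zeta_\sigma \in \mathcal{Y}_\sigma$ solves a discrete error equation driven by the consistency residual, which I would bound using the discrete stability (energy estimate) of the scheme and, for the sharp $L^2(Q_1)$ rate, an Aubin--Nitsche duality argument against the adjoint discrete problem. Throughout, the factor $T$ must be carried explicitly: since it appears only multiplying $a(\cdot,\cdot)$ and the zeroth-order term, the stability constants can be taken uniform in $T > 0$, and the residual inherits the factor $T\|v\|_{L^p(Q_1)} + C(y_0)$ from \eqref{KeyInq0}, which yields \eqref{KeyInq1}.

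The $L^\infty(Q_1)$ estimate \eqref{KeyInq2} is where the work concentrates and is the main obstacle. Here the clean energy/duality machinery is not enough; instead one needs pointwise (maximum-norm) error estimates for the fully discrete parabolic scheme, which rest on discrete Green's function and weighted-norm techniques and on the maximum-norm stability of the Ritz projection on quasi-uniform meshes. It is precisely these tools that generate the logarithmic losses, accounting for the factor $|\log h|^3$, while the reduced algebraic rate $h^{\delta_0}$ reflects the limited spatial regularity (through the H\"{o}lder exponent $\alpha$ from $(H1)$ and the coupling $c_\Omega h^{\theta_4} \le \tau \le C_\Omega h^{\theta_5}$ in $(H4)$ linking $\tau$ and $h$). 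I would import these estimates in the form proved in \cite[Theorem 3.1]{Casas-2023-2} and verify that their constants remain independent of $T$ and $v$ by the same uniform-in-$T$ bookkeeping as above, again factoring the right-hand side through \eqref{KeyInq0}. The delicate points to check are that the convexity of $\Omega$ (assumed at the start of Section 3) and $(H4)$ supply exactly the hypotheses needed for the maximum-norm results, and that the nonnegative zeroth-order term $\alpha T \zeta$ does not spoil the discrete maximum principle underlying those estimates.
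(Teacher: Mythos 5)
Your proposal follows exactly the route the paper takes: the paper gives no detailed proof, only the remark that existence and uniqueness of the discrete solution is standard and that the error estimates follow ``by a similar way as'' Casas, Kunisch and Mateos \cite[Theorem 3.1]{Casas-2023-2}, which is precisely the reference and strategy you build on. Your sketch is correct and in fact supplies more detail than the paper does, including the right places where the $T$-dependence and the logarithmic/reduced-rate losses enter.
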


\subsection{Discretization version  of the optimal control problem} 

To discretize the controls, we will also use continuous piecewise linear in space and piecewise constant in time functions.  We define
\begin{align}
 V_h= & {} \{v_h\in C(\bar\Omega ):\ v_{h|K}\in P_1(K)\ \ \forall K\in \mathcal {K}_h\},  \\
 \mathcal {V}_\sigma= & {} \{v_\sigma \in L^2(0, 1;  V_h): v_{\sigma |I_j}\in V_h\ \forall j =1,\ldots ,N_{\tau }\}. 
\end{align}
Then, the elements of $\mathcal {V}_\sigma$  can be written as
\begin{align}
 v_\sigma = \sum _{j=1}^{N_\tau }v_{h,j}\chi _j = \sum _{j= 1}^{N_\tau }\sum _{i=1}^{N_{h}}v_{i,j} e_i \chi _j= {\sum _{i=1}^{N_h} v_{{\tau ,i}} e_i}, 
\end{align}
where $v_{h,j}\in V_h$ for $j=1,\ldots ,N_\tau,$ $v_{i,j}\in \mathbb {R}$ for $i=1,\ldots ,N_{h}$ and $j =1,\ldots ,N_\tau$;  $\{e_i\}_{i = 1}^{N_h}$ is the nodal basis associated to all nodes $\{x_i\}_{i = 1}^{N_h}$  of the triangulation. Notice that  $v_{{\tau ,i}}=\sum _{j=1}^{N_\tau }v_{i,j}\chi _j$ is a piecewise constant function of time.

Error estimates for solutions to continuous and discrete state equations are the following. 

\begin{lemma}
\label{Lemma-stateEq-dd}
Suppose that $(H1)$ and $(H4)$ are valid. Then for $T_\sigma, T>0$ and  $v_\sigma \in \mathcal{V}_\sigma$,  $v\in L^p(Q_1)$ with $p$ satisfying \eqref{Dimension1},  there exist positive constant $h_0, \tau_0, \delta_0, C_1, C_2 > 0$ which are independent of  $T_\sigma, T$  and  $v_\sigma, v$  such that for every $\tau < \tau_0$ and $h < h_0$, 
\begin{align}
    &\|\zeta_{\sigma}(T_\sigma, v_\sigma)  -   \zeta(T, v)\|_{L^2(Q_1)}   \le C_1 \Big[\Big(T\|v_{\sigma}\|_{L^p(Q_1)} + C(y_0)\Big)(\tau + h^2)   + C_2(v)|T_\sigma  - T|  + T_\sigma \|v_\sigma - v\|_{L^2(Q_1)} \Big], \label{KeyInq11} \\
    &\|\zeta_{\sigma}(T_\sigma, v_\sigma)  -   \zeta(T, v)\|_{L^\infty(Q_1)}   \le C_2 \Big[\Big(T\|v_{\sigma}\|_{L^p(Q_1)} + C(y_0)\Big)|{\rm log}h|^3 h^{\delta_0}    + C_p(v)|T_\sigma  - T|   + T_\sigma \|v_\sigma - v\|_{L^p(Q_1)} \Big], \label{KeyInq22} 
\end{align} 
where $C(y_0) := \|y_0\|_{C^{0, \alpha}(\bar \Omega)} + \|y_0\|_{H_0^1(\Omega)}$,  $C_2(v) := \|v - A\zeta - \psi(\zeta)\|_{L^2(Q_1)}$ and $C_p(v) := \|v - A\zeta - \psi(\zeta)\|_{L^p(Q_1)}$ with $\zeta = \zeta(T, v)$.  Particularly for $s = 1$, one has 
\begin{align}
    \|\zeta_{\sigma}(\cdot, 1)  -   \zeta(\cdot, 1)\|_{L^\infty(\Omega)}   \le C_2 \Big[\Big(T\|v_{\sigma}\|_{L^p(Q_1)} + C(y_0)\Big)|{\rm log}h|^3 h^{\delta_0}    + C_p(v)|T_\sigma  - T|   + T_\sigma \|v_\sigma - v\|_{L^p(Q_1)} \Big]. \label{KeyInq23} 
\end{align}
\end{lemma}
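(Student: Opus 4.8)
The plan is to derive \eqref{KeyInq11}, \eqref{KeyInq22} and \eqref{KeyInq23} by a triangle inequality in which the \emph{continuous} state $\zeta(T_\sigma, v_\sigma)$ is inserted as an intermediate term, so that the total error splits into a pure discretization part and a continuous stability part:
$$\|\zeta_\sigma(T_\sigma, v_\sigma) - \zeta(T, v)\| \le \|\zeta_\sigma(T_\sigma, v_\sigma) - \zeta(T_\sigma, v_\sigma)\| + \|\zeta(T_\sigma, v_\sigma) - \zeta(T, v)\|.$$
The first summand is exactly the object estimated by Lemma \ref{Lemma-stateEq-d} at the pair $(T_\sigma, v_\sigma)$, which furnishes the factor $(\tau + h^2)$ in $L^2$ and $|\log h|^3 h^{\delta_0}$ in $L^\infty$. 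Writing $T_\sigma \le T + |T_\sigma - T|$, the discrepancy between $(T_\sigma\|v_\sigma\|_{L^p} + C(y_0))$ and $(T\|v_\sigma\|_{L^p} + C(y_0))$ produces a correction of order $|T_\sigma - T|(\tau + h^2)\|v_\sigma\|_{L^p}$, which—since $v_\sigma$ is bounded and $\tau, h$ are small—I would absorb into the $C_2(v)|T_\sigma - T|$ (resp. $C_p(v)|T_\sigma - T|$) term.

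The heart of the argument is the second summand, a joint stability estimate for the continuous state map. Setting $w := \zeta(T_\sigma, v_\sigma) - \zeta(T, v)$ and $\zeta := \zeta(T, v)$, I would subtract the two equations in the $(P'')$-form \eqref{P2.1} and regroup so that the leading operator carries the coefficient $T_\sigma$; a direct computation gives
$$\frac{\partial w}{\partial s} - T_\sigma \Delta w + \alpha T_\sigma w = T_\sigma(v_\sigma - v) + (T_\sigma - T)\big(v + \Delta \zeta - \alpha \zeta\big), \qquad w(0) = 0.$$
The source term is precisely adapted to the constants in the statement: in the present linear setting one has $v + \Delta\zeta - \alpha\zeta = v - A\zeta - \psi(\zeta)$ (with $A = -\Delta$ and $\psi(\zeta) = \alpha\zeta$), so its $L^2$- and $L^p$-norms are $C_2(v)$ and $C_p(v)$. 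Hence the right-hand side is bounded by $T_\sigma\|v_\sigma - v\|_{L^2(Q_1)} + C_2(v)|T_\sigma - T|$ in $L^2$ (and analogously in $L^p$), and since $w$ has zero initial datum, applying the continuous a priori estimate of Lemma \ref{Lemma-stateEq}—estimate \eqref{KeyInq0} for the $L^\infty$ bound—delivers the remaining two terms of \eqref{KeyInq11} and \eqref{KeyInq22}.

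For the terminal estimate \eqref{KeyInq23} I would evaluate all bounds on the slice $s = 1$: the discretization part uses the $L^\infty(\Omega)$ control of $\zeta_\sigma(\cdot, 1) - \zeta(\cdot, 1)$ already contained in the proof of Lemma \ref{Lemma-stateEq-d}, while the continuous part uses that $w(\cdot, 1)$ is governed in $L^\infty(\Omega)$ by the same source term, exactly as in the terminal convergence $y_n(\cdot, T) \to y(\cdot, T)$ asserted in Lemma \ref{Lemma-stateEq}.

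The hard part will be the stability estimate for $w$, and within it the regrouping that makes the time-parameter perturbation appear as the clean source $(T_\sigma - T)(v + \Delta\zeta - \alpha\zeta)$; this step needs $\zeta = \zeta(T, v) \in Y_1$ so that $\Delta\zeta \in L^p(Q_1)$, which Lemma \ref{Lemma-stateEq} guarantees. A secondary nuisance is the $T$-versus-$T_\sigma$ bookkeeping in the discretization term, where the cross terms are of higher order and must be absorbed as indicated above, together with the $L^\infty$ terminal bound, whose pointwise-in-time evaluation rests on the embedding \eqref{keyEmbed1} on the continuous side and on the discrete maximum-norm estimate (with its $|\log h|^3$ factor) on the discrete side.
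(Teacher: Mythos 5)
Your proposal is correct and follows essentially the same route as the paper: insert $\zeta(T_\sigma,v_\sigma)$ via the triangle inequality, bound the discretization part by Lemma \ref{Lemma-stateEq-d}, and bound the continuous part by observing that the difference solves the parabolic equation with leading coefficient $T_\sigma$ and source $T_\sigma(v_\sigma-v)+(T_\sigma-T)[v-A\zeta-\psi(\zeta)]$, then applying the a priori estimates of Lemma \ref{Lemma-stateEq}. Your extra bookkeeping of the $T$-versus-$T_\sigma$ factor in the discretization term is a detail the paper's (very terse) proof passes over silently, and your treatment of it is sound.
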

\begin{proof}
    Notice that 
    \begin{align*}
        |\zeta_{\sigma}(T_\sigma, v_\sigma)  -   \zeta(T, v)| \le  |\zeta_{\sigma}(T_\sigma, v_\sigma)  - \zeta(T_\sigma, v_\sigma)| +  |\zeta(T_\sigma, v_\sigma)   -   \zeta(T, v)|, 
    \end{align*}
    and $\zeta(T_\sigma, v_\sigma)   -   \zeta(T, v)$ is solution to the following parabolic equation 
    \begin{align}
    \begin{cases}
        \dfrac{\partial(\zeta_\sigma - \zeta)}{\partial s}  + T_\sigma A(\zeta_\sigma - \zeta) + T_\sigma[\psi(\zeta_\sigma) - \psi(\zeta)] =       T_\sigma(v_\sigma - v) + (T_\sigma - T)[v - A\zeta - \psi(\zeta)],\\
        (\zeta_\sigma - \zeta)(0)  = 0. 
    \end{cases}
    \end{align}
    Then, estimates (\ref{KeyInq11}) and (\ref{KeyInq22}) follow from Lemma \ref{Lemma-stateEq}, \cite[Theorem 5, p. 360]{Evan} and Lemma \ref{Lemma-stateEq-d}. 
\end{proof}

\medskip

Now, we formulate the discrete control problem as follows 
\begin{align}
    &J(T_\sigma,  \zeta_\sigma, v_\sigma) =  T_\sigma  \to  \inf    \label{P1.1d}\\
    &{\rm s.t.}  \quad
    (T_\sigma,  \zeta_\sigma, v_\sigma) \in Z_\sigma := (0, +\infty)\times {\mathcal{Y}}_\sigma \times {\mathcal V}_\sigma,  
    \  \zeta_\sigma = \sum _{j=1}^{N_\tau }\zeta_{ {h,j}}\chi _j, \   v_\sigma = \sum _{j=1}^{N_\tau }v_{h,j}\chi _j,   \nonumber \\ 
    (P''_\sigma)  \quad  \quad \quad  &{\rm {for}} \  j = 1,\ldots ,N_\tau,  \quad  \int _{\Omega} (\zeta_{h,j}-  \zeta_{h,j-1}) z_h dx  +   {\tau _j} T_\sigma a( \zeta_{h,j},z_h)    \nonumber \\ 
    &\hspace{2.7cm} +  \int _{I_j}\int _{\Omega} \alpha T_\sigma \zeta_{h,j} z_h dx ds  = \int _{I_j}\int _{\Omega}T_\sigma v_{h,j} z_h dx ds \quad \forall z_h\in Y_h,    \label{P2.1d}\\
    &\int _{\Omega} \zeta_{h,0} z_h dx = \int _{\Omega} y_{0} z_h dx  \quad    \forall z_h\in Y_h,   \label{P3.1d}\\
    &H(\zeta_\sigma(1))  \le 0,       \label{P4.1d}\\
    &a \le v_\sigma(x, s) \le  b \quad {\rm a.a.} \ x \in \Omega, \ s  \in [0, 1].   \label{P5.1d}
\end{align}
This is a mathematical programming problem in finite dimensional spaces. Thus the discrete version is a sequence of  approximate optimal control problems to the problem $(P'')$. Hereafter, we denote by $\Phi''_\sigma$ the feasible sets of $(P''_\sigma)$.  

\newpage

To end this section, we present optimality conditions for $(P''_\sigma)$.  

\begin{proposition}
\label{Pro-KKT-P''_sigma}
Suppose assumptions $(H1)$-$(H3)$, $T_{*\sigma} > 0$ and $z_{*\sigma} = (T_{*\sigma}, \zeta_{*\sigma}, v_{*\sigma})\in\Phi''_\sigma$ is a locally optimal solution to $(P''_\sigma)$, where $\zeta_{*\sigma} = \sum _{j=1}^{N_\tau }\zeta_{ {h,j}}\chi _j$  and    $v_{*\sigma} = \sum _{j=1}^{N_\tau}v_{h,j}\chi _j$. Then there exist Lagrange multipliers $\mu_\sigma > 0$ and  $\varphi_\sigma, e_\sigma \in {\mathcal Y}_\sigma$ with  $ \varphi_\sigma = \sum _{j=1}^{N_\tau }\varphi_{ {h,j}}\chi _j$  and    $e_\sigma = \sum _{j=1}^{N_\tau}e_{h,j}\chi _j$ such that the following conditions are satisfied:

\noindent $(i)$  (the discrete adjoint equations)  for $j  =  N_\tau,\ldots , 1$, 
\begin{align}
&\int _{\Omega} (\varphi_{h,j}-  \varphi_{h,j + 1}) z_h dx  +   {\tau _j} T_{*\sigma} a(z_h, \varphi_{h,j})     +  \int _{I_j}\int _{\Omega} \alpha T_{*\sigma} \varphi_{h,j} z_h dx ds  =   0    \quad \forall z_h\in Y_h, \label{cm0.1b}   \\
    &\int _{\Omega} \varphi_{h,  N_{\tau + 1}} z_h dx =   - \mu_\sigma \int _{\Omega} (\zeta_{h,  N_{\tau + 1}} -  y_\Omega) z_h dx  \quad    \forall z_h\in Y_h;   \label{cm0.1b1}
\end{align}

\noindent $(ii)$   (optimality condition for $v_{*\sigma}$) 
\begin{align}
\label{cm0.3b}
      -  T_{*\sigma}\varphi_\sigma (x, s) +  e_\sigma(x, s)  =  0 \quad {\rm a.a.} \ (x, s) \in Q_1; 
\end{align}

\noindent $(iii)$   (optimality condition for $T_{*\sigma}$) 
\begin{align}\label{cm0.4b}  
     \sum_{j = 1}^{N_\tau}\Big\{ {\tau _j} a( \zeta_{h,j}, \varphi_{h,j})   
     +  \int _{I_j}\int _{\Omega} (\alpha  \zeta_{h,j}  - v_{h,j}) \varphi_{h,j} dx ds \Big\} = -1;
\end{align}

\noindent $(iv)$ (the complementary conditions)   $H(\zeta_{h, N_{\tau + 1}}) = 0$ and
\begin{align}
\label{a0.4b}
    e_\sigma(x, s)
    \begin{cases}
       \leq 0\quad  &{\rm {if}} \ v_{h, j}(x) = a, \\
    \geq 0\quad  &{\rm {if}} \ v_{h, j}(x) = b, \\
     =0 \quad &{\rm {if}} \ v_{h, j}(x) \in (a, b).
    \end{cases}
\end{align}
\end{proposition}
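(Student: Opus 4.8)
The plan is to regard $(P''_\sigma)$ as a smooth finite-dimensional mathematical programming problem and to apply the classical Karush--Kuhn--Tucker theorem, then to rearrange the resulting stationarity relations into the adjoint form $(i)$--$(iv)$, paralleling the treatment of the continuous problem in Proposition \ref{Pro-KKT-P''}. The genuine unknowns are the scalar $T_{*\sigma}$ together with the finitely many nodal coefficients $\zeta_{i,j}$ and $v_{i,j}$ determining $\zeta_{*\sigma}$ and $v_{*\sigma}$: the discrete state equations \eqref{P2.1d}--\eqref{P3.1d} are finitely many smooth (bilinear in $T_\sigma$ and in the remaining variables) equality constraints, the terminal condition \eqref{P4.1d} is a single smooth quadratic inequality, and \eqref{P5.1d} reduces to the box inequalities $a\le v_{i,j}\le b$ at the nodes, since $v_{*\sigma}$ is piecewise linear in space and attains its extrema at nodes.

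First I would secure a constraint qualification. By Lemma \ref{Lemma-stateEq-d} the discrete forward operator $\zeta\mapsto$(left side of \eqref{P2.1d}--\eqref{P3.1d}) is invertible, so the Jacobian of the equality constraints has full row rank, and for any admissible direction in $(T_\sigma,v_\sigma)$ the induced $\zeta$-direction is the unique solution of the linearized discrete scheme. Using $(H3)$ I would then exhibit a Mangasarian--Fromovitz direction: take $\delta T>0$, let $\delta v$ point strictly into the box at every active node, and let $\delta\zeta$ solve the linearized state equation. Since $\partial_T h(T_{*\sigma},v_{*\sigma})<0$ by $(H3)$, taking $\delta T$ large enough makes the linearization $\partial_T h\,\delta T+\partial_v h\,\delta v$ of the terminal constraint strictly negative, so the active terminal constraint strictly decreases while the active box constraints strictly improve. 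Hence MFCQ holds and KKT multipliers exist with the objective multiplier normalized to $1$; this is the discrete counterpart of the normalization $\int_\Omega\lambda\,dx=1$ obtained in Proposition \ref{Pro-KKT-P''} through $(H3)$. I denote the multiplier of \eqref{P4.1d} by $\mu_\sigma\ge 0$, assemble the multipliers of \eqref{P2.1d} into $\{\varphi_{h,j}\}$, and those of the box constraints into $e_\sigma$.

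Next I would read off the three stationarity blocks from the Lagrangian obtained by pairing each residual of \eqref{P2.1d} at level $j$ with $\varphi_{h,j}$, the inequality \eqref{P4.1d} with $\mu_\sigma$, and the box constraints with $e_\sigma$. The delicate point is differentiation with respect to a nodal coefficient $\zeta_{i,k}$: the function $\zeta_{h,k}$ enters the residual at level $k$ (through the mass, stiffness and reaction terms and as $+\zeta_{h,k}$ in the time difference) and the residual at level $k+1$ (as $-\zeta_{h,k}$ in the difference $\zeta_{h,k+1}-\zeta_{h,k}$); a discrete summation by parts then produces the backward recursion carrying the increment $\varphi_{h,j}-\varphi_{h,j+1}$ together with the transposed stiffness $a(z_h,\varphi_{h,j})$ and the reaction term, that is \eqref{cm0.1b}, while $\partial_\zeta[\mu_\sigma H(\zeta_{*\sigma}(1))]=\mu_\sigma(\zeta_{*\sigma}(1)-y_\Omega)$ supplies the terminal datum \eqref{cm0.1b1}. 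Differentiation with respect to $v_{i,j}$ balances the state-equation term $-T_{*\sigma}\varphi_\sigma$ against the box multiplier, yielding \eqref{cm0.3b} after identifying the nodal conditions with a single function $e_\sigma\in\mathcal{Y}_\sigma$, and box complementarity gives the sign pattern \eqref{a0.4b}. Differentiation with respect to $T_\sigma$ contributes $1$ from the objective and the terms of \eqref{P2.1d} that are bilinear in $T_\sigma$, producing \eqref{cm0.4b} with right-hand side $-1$ after normalization.

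Finally I would exclude the abnormal case to obtain $\mu_\sigma>0$, arguing exactly as in Proposition \ref{Pro-KKT-P''}. If $\mu_\sigma=0$, then \eqref{cm0.1b}--\eqref{cm0.1b1} is a backward discrete parabolic system with vanishing terminal data; since it is, like the forward scheme in Lemma \ref{Lemma-stateEq-d}, a uniquely solvable triangular linear system, this forces $\varphi_\sigma\equiv 0$. Then \eqref{cm0.3b} gives $e_\sigma\equiv 0$, so the left side of \eqref{cm0.4b} vanishes, contradicting its right side $-1$. Hence $\mu_\sigma>0$, and complementary slackness then yields $H(\zeta_{*\sigma}(1))=0$, completing \eqref{a0.4b}. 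I expect the main obstacle to be the careful bookkeeping of the $\zeta_{i,k}$-derivative and the attendant discrete summation by parts that fixes the correct index shift $\varphi_{h,j+1}$ in the adjoint recursion, together with the clean identification of the nodal box-stationarity conditions with a single element $e_\sigma\in\mathcal{Y}_\sigma$, which must reconcile the mass-matrix weights and the boundary-node mismatch between $V_h$ and $Y_h$.
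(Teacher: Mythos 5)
Your proposal follows essentially the same route as the paper: both treat $(P''_\sigma)$ as a finite-dimensional mathematical program, use $(H3)$ to secure a constraint qualification (the paper invokes the linearized Slater condition via \cite[Proposition 4.7]{Bon1}, you construct an MFCQ direction), read off $(i)$--$(iii)$ from stationarity of the same discrete Lagrangian after summation by parts, and exclude $\mu_\sigma=0$ by backward uniqueness of the discrete adjoint recursion. The one point to watch is your assertion that $\partial_T h(T_{*\sigma},v_{*\sigma})<0$ holds directly ``by $(H3)$'': that assumption is posed at the continuous optimum $(T_*,v_*)$, so carrying it over to the discrete point requires a convergence/stability argument, which the paper itself only handles by citing \cite[Proposition 4.7]{Bon1}.
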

\begin{proof}  
    The proof of this result can be done in the same way as the proof of optimality conditions in \cite[Lemma 4.9]{Bon1}. More precisely, by $(H3)$ and similar arguments as \cite[Proposition 4.7]{Bon1}, the linearized Slater condition holds at $z_{*\sigma} = (T_{*\sigma}, \zeta_{*\sigma}, v_{*\sigma})\in\Phi''_\sigma$, which  yields the optimality conditions in qualified form.  In our case, the Lagrange function associated with the problem $(P''_\sigma)$ is given by:
    \begin{align}
        &{\mathcal L}_\sigma(T_\sigma, \zeta_\sigma, v_\sigma; \varphi_\sigma, \mu_\sigma, e_\sigma)  =  T_\sigma \nonumber \\
        &+ \sum_{j = 1}^{N_\tau}\Big\{\int _{\Omega} (\zeta_{h,j}-  \zeta_{h,j-1}) \varphi_{h,j} dx  +   {\tau _j} T_\sigma a( \zeta_{h,j}, \varphi_{h,j})   
     +  \int _{I_j}\int _{\Omega} \alpha T_\sigma \zeta_{h,j} \varphi_{h,j} dx ds  - \int _{I_j}\int _{\Omega}T_\sigma v_{h,j} \varphi_{h,j} dx ds \Big\} \nonumber \\
     &+  \int _{\Omega} (\zeta_{h,0} - y_0)  \varphi_{h,0} dx    + \mu_\sigma H(\zeta_{h, N_{\tau + 1}})  + \sum_{j = 1}^{N_\tau}\int _{I_j}\int _{\Omega}e_{h,j} v_{h,j} dx ds.  
    \end{align}
    Then the optimality of $z_{*\sigma}$ implies that there exist Lagrange multipliers $\mu_\sigma \ge 0$ and  $\varphi_\sigma, e_\sigma \in {\mathcal Y}_\sigma$ with  $ \varphi_\sigma = \sum _{j=1}^{N_\tau }\varphi_{ {h,j}}\chi _j$  and    $e_\sigma = \sum _{j=1}^{N_\tau}e_{h,j}\chi _j$ such that for $j  =  N_\tau,\ldots , 1$, 
    \begin{align}
        &(\mu_\sigma, \varphi_\sigma, e_\sigma) \ne (0, 0, 0), \label{xx0} \\
        &\int _{\Omega} (\varphi_{h,j}-  \varphi_{h,j + 1}) z_h dx  +   {\tau _j} T_{*\sigma} a(z_h, \varphi_{h,j})     +  \int _{I_j}\int _{\Omega} \alpha T_{*\sigma} \varphi_{h,j} z_h dx ds  =   0    \quad \forall z_h\in Y_h,   \label{xx1}  \\
        &\int _{\Omega} \varphi_{h,  N_{\tau + 1}} z_h dx =   - \mu_\sigma \int _{\Omega} (\zeta_{h,  N_{\tau + 1}} -  y_\Omega) z_h dx  \quad    \forall z_h\in Y_h, \label{xx2} \\
        &\int _{I_j}\int _{\Omega}\big(-T_{*\sigma}  \varphi_{h,j} + e_{h,j}\big) z_h dx ds  = 0  \quad \forall z_h\in Y_h,    \label{xx3}  \\
        %\end{align}
        %\begin{align}
        &1 + \sum_{j = 1}^{N_\tau}\Big\{ {\tau _j} a( \zeta_{h,j}, \varphi_{h,j})   
     +  \int _{I_j}\int _{\Omega} (\alpha  \zeta_{h,j}  - v_{h,j}) \varphi_{h,j} dx ds \Big\} = 0, \label{xx4} \\
        &\mu_\sigma H(\zeta_{h, N_{\tau + 1}})  = 0, \label{xx5} \\
        &e_\sigma(x, s)\in  N([a, b],  v_{*\sigma}(x, s)) \ {\rm a.a.}\ (x, s)\in Q_1. \label{xx6}
    \end{align}
    From (\ref{xx1}), (\ref{xx2}), (\ref{xx3}) and (\ref{xx4}), it is easy to see that $(i)$, $(ii)$ and $(iii)$ are valid. \\
If $\mu_\sigma = 0$ then we have from (\ref{xx1})-(\ref{xx2}) that $\varphi_{h, j} = 0$ for all $j = 1, 2, ..., N_\tau$, and so $\varphi_\sigma = 0$. This and (\ref{xx3}) imply that $e_\sigma = 0$,  which contradicts (\ref{xx0}). So we must have $\mu_\sigma > 0$.  \\
Combining (\ref{xx5}) with $\mu_\sigma > 0$, we get $H(\zeta_{h, N_{\tau + 1}}) = 0$. The condition (\ref{xx6}) implies (\ref{a0.4b}). 
The proof of the proposition is complete.    
\end{proof}

\subsection{Convergence analysis and error estimates}

For each $v \in L^1(Q_1)$, we define 
\begin{align}
 I_\sigma v := P_\tau E_h v = E_h P _\tau v = \sum _{i = 1}^{N_h}\sum _{j = 1}^{N_\tau }\Big (\frac{1}{\tau _j\int _{\Omega} e_i\, dx}\int _{t_{j-1}}^{t_j}\int _{\Omega} v(x, s)e_i(x)\, dx\, ds\Big )e_i\chi _j, 
\end{align}
where $P_\tau$ is the $L^2(0, 1)$ projection operator and $E_h : L^1(\Omega) \to V_h$ is the Carstensen quasi-interpolation operator (see \cite{CARSTENSEN}). Then $I_\sigma v \in \mathcal V_\sigma$. 
\begin{lemma}
\label{Proj}
    If $v \in L^2(Q_1)$ then $I_\sigma v  \to v$ in $L^2(Q_1)$ as $\sigma \to 0$. Moreover, if $v \in H^1(Q_1)$ then there exists $C > 0$ independent of $\tau$ and $h$ such that 
    \begin{align}
        \|I_\sigma v - v\|_{L^2(Q_1)}  \le C(\tau + h) \|v\|_{H^1(Q_1)}. 
    \end{align}
\end{lemma}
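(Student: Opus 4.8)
The plan is to exploit the space--time tensor structure $I_\sigma = P_\tau E_h = E_h P_\tau$ recorded in the definition, working in $L^2(Q_1) = L^2(0, 1; L^2(\Omega))$, where $P_\tau$ acts only in time and $E_h$ only in space. First I would collect the elementary properties of each factor. The temporal projection $P_\tau$ onto piecewise constants is the orthogonal $L^2(0, 1)$ projection, hence a contraction, and it commutes with spatial differentiation (on each $I_j$ it is the time average, so $\nabla_x P_\tau v = P_\tau \nabla_x v$); moreover a Bramble--Hilbert/Poincar\'e argument on each subinterval gives $\|P_\tau w - w\|_{L^2(0, 1)} \le C\tau\|\partial_s w\|_{L^2(0, 1)}$. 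For the Carstensen quasi-interpolation $E_h$ I would invoke from \cite{CARSTENSEN} its $L^2(\Omega)$-stability $\|E_h w\|_{L^2(\Omega)} \le C\|w\|_{L^2(\Omega)}$ and its first-order approximation $\|w - E_h w\|_{L^2(\Omega)} \le Ch\|\nabla w\|_{L^2(\Omega)}$, both with $C$ uniform in $h$.

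For the error estimate I would split
\[
I_\sigma v - v = E_h P_\tau v - v = (E_h - I)(P_\tau v) + (P_\tau - I)v
\]
and treat the two terms separately. The second is purely temporal: applying the interval-wise estimate pointwise in $x$ and integrating over $\Omega$ yields $\|(P_\tau - I)v\|_{L^2(Q_1)} \le C\tau\|\partial_s v\|_{L^2(Q_1)} \le C\tau\|v\|_{H^1(Q_1)}$. For the first term I would apply the spatial approximation of $E_h$ slicewise in time, obtaining $\|(E_h - I)(P_\tau v)\|_{L^2(Q_1)} \le Ch\|\nabla_x P_\tau v\|_{L^2(Q_1)}$; the commutation and contraction of $P_\tau$ then give $\|\nabla_x P_\tau v\|_{L^2(Q_1)} = \|P_\tau \nabla_x v\|_{L^2(Q_1)} \le \|\nabla_x v\|_{L^2(Q_1)} \le \|v\|_{H^1(Q_1)}$. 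Adding the two bounds delivers $\|I_\sigma v - v\|_{L^2(Q_1)} \le C(\tau + h)\|v\|_{H^1(Q_1)}$.

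For the density statement I would run the standard stability-plus-dense-subset argument. Uniform boundedness is immediate from the factor bounds, $\|I_\sigma v\|_{L^2(Q_1)} \le \|P_\tau\|\,\|E_h v\|_{L^2(Q_1)} \le C\|v\|_{L^2(Q_1)}$ with $C$ independent of $\sigma$. Given $v \in L^2(Q_1)$ and $\varepsilon > 0$, choose $w \in H^1(Q_1)$ with $\|v - w\|_{L^2(Q_1)} < \varepsilon$ (by density of $H^1$ in $L^2$) and estimate
\[
\|I_\sigma v - v\|_{L^2(Q_1)} \le \|I_\sigma(v - w)\|_{L^2(Q_1)} + \|I_\sigma w - w\|_{L^2(Q_1)} + \|w - v\|_{L^2(Q_1)} \le (C + 1)\varepsilon + \|I_\sigma w - w\|_{L^2(Q_1)}.
\]
By the estimate just proved the middle term tends to $0$ as $\sigma \to 0$, whence $\limsup_{\sigma \to 0}\|I_\sigma v - v\|_{L^2(Q_1)} \le (C + 1)\varepsilon$, and letting $\varepsilon \to 0$ gives the claim.

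The computations are routine once the tensor structure is used, so there is no deep obstacle. The only point demanding care is the first term of the splitting: one must verify that $P_\tau v$ is genuinely $H^1$ in space with gradient controlled by that of $v$, which is exactly where the commutation $\nabla_x P_\tau = P_\tau \nabla_x$ and the $L^2$-contraction of $P_\tau$ enter, and one must ensure the constants taken from \cite{CARSTENSEN} are uniform in $h$ so that, under the quasi-uniformity in $(H4)$, the resulting bound is uniform in $\sigma$.
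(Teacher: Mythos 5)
Your argument is correct; the paper itself gives no proof beyond citing \cite[Lemmas 6.3 and 6.6]{Casas-2018}, and your tensor-factorization argument (contraction and $O(\tau)$ approximation of the temporal projection $P_\tau$, $L^2$-stability and first-order approximation of the Carstensen operator $E_h$ under quasi-uniformity, followed by the standard density argument for the $L^2$ convergence) is precisely the standard proof underlying those cited lemmas. No gaps.
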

\begin{proof}
    The reader is referred to \cite[Lemma 6.3 and Lemma 6.6]{Casas-2018} for the detailed proof of the result.

\end{proof}

The following theorem is main result of this section. 
\begin{theorem}
\label{main-result}
    Suppose that $(H1)$-$(H4)$ are satisfied, $\Phi''_+$ is nonempty and $\{(\bar T_\sigma, \bar \zeta_\sigma, \bar v_\sigma)\}$ is a sequence of  globally optimal solutions to $(P''_\sigma)$. Then:
    \begin{enumerate}[$(i)$]
        \item There exists a subsequence $\{(\widehat T_\sigma, \widehat \zeta_\sigma, \widehat v_\sigma)\}$ of $\{(\bar T_\sigma, \bar \zeta_\sigma, \bar v_\sigma)\}$ such that $\widehat T_\sigma \to \widehat T$ in $\mathbb R$, $\widehat \zeta_\sigma \to \widehat \zeta$ in $L^\infty(Q_1)$ and $\widehat v_\sigma \rightharpoonup  \widehat v$ in $L^p(Q_1)$, where $(\widehat T, \widehat \zeta, \widehat v)$ is a globally optimal solution of the problem ${\rm inf}_{(T,  \zeta, v) \in \Phi''_+} T$ with $\Phi''_+$ defined by (\ref{dfphi+});
        \item In addition, if the strong second-order sufficient condition (\ref{StrictSOC}) is satisfied for some $\gamma, \beta > 0$ and $(\mu, \varphi, e)$   satisfying  conditions $(i)$-$(iv)$ of Proposition \ref{Pro-KKT-P''}, then the convergence property in part $(i)$ holds for the entire sequence $\{(\bar T_\sigma, \bar \zeta_\sigma, \bar v_\sigma)\}$, namely,      
        $\bar T_\sigma \to T_*$ in $\mathbb R$, $\bar \zeta_\sigma \to \zeta_*$ in $L^\infty(Q_1)$ and $\bar v_\sigma \to  v_*$ in $L^s(Q_1)$ for any $1 \le s < +\infty$. Moreover, there exist positive constants $h_0, \tau_0, \delta_0, c_i > 0$, $i = 1, ..., 8$,  which are independent of $\sigma$ such that 
        \begin{align}
        &|\bar T_\sigma - T_*| \le c_1 \big(\tau + h  +  |{\rm log} \tau|(\tau + h^2)\big),  \label{es1}\\
        %&\|\bar v_\sigma -  v_*\|_{L^2(Q_1)} \le c_2 \big(\tau + h  +  |{\rm log} \tau|(\tau + h^2)\big)^{\frac{1}{2}}, \label{es2} \\
        &\|\bar v_\sigma -  v_*\|_{L^s(Q_1)} \le c_2 \big(\tau + h  +  |{\rm log} \tau|(\tau + h^2)\big)^{\frac{1}{s}} \quad \forall s \ge 2,  \label{es2} \\
            %&\|\bar v_\sigma - v_*\|^2_{L^2(Q_1)} + |\bar T_\sigma - T_*|  \le c_1|\big(\eta(\sigma)  +  |{\rm log} \tau|(\tau + h^2)\big),  \label{Kq1}  \\
            &\|\bar \zeta_\sigma - \zeta_*\|_{L^2(Q_1)}   \le c_3(\tau + h^2) + c_4 \big(\tau + h  +  |{\rm log} \tau|(\tau + h^2)\big) + c_5 \Big(\tau + h  +  |{\rm log} \tau|(\tau + h^2)\Big)^{\frac{1}{2}}, \label{es3}  \\
            &\|\bar \zeta_\sigma - \zeta_*\|_{L^\infty(Q_1)}   \le c_6|{\rm log}h|^3 h^{\delta_0} + c_7  \big(\tau + h  +  |{\rm log} \tau|(\tau + h^2)\big) + c_8 \Big(\tau + h  +  |{\rm log} \tau|(\tau + h^2)\Big)^{\frac{1}{p}} \label{es4} 
        \end{align}
        for every $\tau \le \tau_0$ and $h \le h_0$. 
    \end{enumerate}
\end{theorem}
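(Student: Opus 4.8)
The plan is to prove $(i)$ by a variational--convergence (compactness plus recovery sequence) argument and then to prove $(ii)$ by a two--sided comparison that activates the quadratic growth of Theorem \ref{SOSC-P''}. Throughout I abbreviate $\eta_\sigma := \tau + h + |\log\tau|(\tau + h^2)$, and I take $(T_*,\zeta_*,v_*)$ to be the global minimizer supplied by Proposition \ref{E3}, at which $H(\zeta_*(1)) = 0$ and the multipliers $(\mu,\varphi,e)$ of Proposition \ref{Pro-KKT-P''} are available, so in particular $v_*\in H^1(Q_1)$.

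For the compactness part of $(i)$, since $a\le \bar v_\sigma\le b$ the family $\{\bar v_\sigma\}$ is bounded in $L^p(Q_1)$, so along a subsequence $\widehat v_\sigma\rightharpoonup\widehat v$ in $L^p(Q_1)$, and as in Proposition \ref{E3} the weak limit lies in $K_p$. Boundedness of $\widehat T_\sigma$ (hence a convergent subsequence $\widehat T_\sigma\to\widehat T$) follows from the recovery construction of the next paragraph applied to a fixed $(\widetilde T,\widetilde\zeta,\widetilde v)\in\Phi''_+$, which yields discrete competitors with $T$ near $\widetilde T$. Lemma \ref{Lemma-stateEq-dd} then gives $\widehat\zeta_\sigma=\zeta_\sigma(\widehat T_\sigma,\widehat v_\sigma)\to\zeta(\widehat T,\widehat v)=:\widehat\zeta$ in $L^\infty(Q_1)$ and at $s=1$ in $L^\infty(\Omega)$, so $H(\widehat\zeta(1))=\lim_\sigma H(\widehat\zeta_\sigma(1))\le 0$, while $(H2)$ excludes $\widehat T=0$ exactly as in Proposition \ref{E3}. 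Thus $(\widehat T,\widehat\zeta,\widehat v)\in\Phi''_+$ and $\widehat T\ge\inf_{\Phi''_+}T=T_*$.

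The recovery sequence (the upper inequality) is built at the control $I_\sigma v_*\in\mathcal V_\sigma$: since $I_\sigma=P_\tau E_h$ is a composition of averaging operators it preserves the bounds $a\le I_\sigma v_*\le b$, and $I_\sigma v_*\to v_*$ in $L^2(Q_1)$ with rate $\tau+h$ by Lemma \ref{Proj} (using $v_*\in H^1(Q_1)$). Comparing $H(\zeta_\sigma(T_*,I_\sigma v_*)(1))$ with $H(\zeta_*(1))=0$ via the endpoint state error of Lemma \ref{Lemma-stateEq-dd} and continuous dependence on the control shows the discrete endpoint value is $O(\eta_\sigma)$. The linearized Slater condition $(H3)$, $-\partial_T h(T_*,v_*)>0$, then lets me raise the time by $d_\sigma=O(\eta_\sigma)$ to restore $H\le 0$, producing $(T_*+d_\sigma,\zeta_\sigma(T_*+d_\sigma,I_\sigma v_*),I_\sigma v_*)\in\Phi''_\sigma$. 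Hence $\bar T_\sigma\le T_*+d_\sigma$, so $\limsup_\sigma\bar T_\sigma\le T_*$; with the feasibility bound $\widehat T\ge T_*$ this gives $\widehat T=T_*=\inf_{\Phi''_+}T$, and since $T_*$ is a fixed number the full sequence satisfies $\bar T_\sigma\to T_*$. This proves $(i)$ and the $T$--convergence in $(ii)$.

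For the estimates in $(ii)$, Theorem \ref{SOSC-P''} supplies $\varrho,\kappa>0$ with the growth \eqref{m00} on $\{|T-T_*|<\varrho\}$. The recovery step already gives $\bar T_\sigma-T_*\le d_\sigma=O(\eta_\sigma)$, so eventually $|\bar T_\sigma-T_*|<\varrho$. For a matching lower bound I transfer the discrete optimum to a continuous competitor: keeping $\bar v_\sigma$ (which satisfies $a\le\bar v_\sigma\le b$) and forming $\zeta(\bar T_\sigma,\bar v_\sigma)$, the endpoint differs from $\bar\zeta_\sigma(1)$ only by the state--approximation error at fixed $(\bar T_\sigma,\bar v_\sigma)$, which is $O(|\log\tau|(\tau+h^2))=O(\eta_\sigma)$ in $L^2(\Omega)$, so $H(\zeta(\bar T_\sigma,\bar v_\sigma)(1))\le O(\eta_\sigma)$; raising the time by $e_\sigma=O(\eta_\sigma)$ via $(H3)$ yields a point of $\Phi''$ with $|\bar T_\sigma+e_\sigma-T_*|<\varrho$. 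Applying \eqref{m00} gives
$$\bar T_\sigma+e_\sigma-T_*\ \ge\ \tfrac{\kappa}{2}\big(\|\bar v_\sigma-v_*\|_{L^2(Q_1)}^2+(\bar T_\sigma+e_\sigma-T_*)^2\big).$$
Together with $\bar T_\sigma-T_*\le d_\sigma$ this forces $|\bar T_\sigma-T_*|\le\max(d_\sigma,e_\sigma)=O(\eta_\sigma)$, which is \eqref{es1}, and $\|\bar v_\sigma-v_*\|_{L^2(Q_1)}^2\le\tfrac{2}{\kappa}(d_\sigma+e_\sigma)=O(\eta_\sigma)$; interpolating against the uniform bound $\|\bar v_\sigma-v_*\|_{L^\infty(Q_1)}\le b-a$ yields \eqref{es2} for all $s\ge 2$. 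Finally \eqref{es3}--\eqref{es4} follow by inserting these bounds for $|\bar T_\sigma-T_*|$, $\|\bar v_\sigma-v_*\|_{L^2(Q_1)}$ and $\|\bar v_\sigma-v_*\|_{L^p(Q_1)}=O(\eta_\sigma^{1/p})$ into \eqref{KeyInq11} and \eqref{KeyInq22} of Lemma \ref{Lemma-stateEq-dd}, the pure discretization terms $(\tau+h^2)$ and $|\log h|^3h^{\delta_0}$ coming from the state scheme. I expect the main obstacle to be the two endpoint perturbation steps: establishing the pointwise--in--time $L^2(\Omega)$ discrete state error at $s=1$ with the sharp rate $|\log\tau|(\tau+h^2)$ (the origin of the logarithmic factor), and applying $(H3)$ uniformly in $\sigma$ so that $d_\sigma,e_\sigma=O(\eta_\sigma)$ while keeping both perturbed competitors inside the quadratic--growth ball $\{|T-T_*|<\varrho\}$.
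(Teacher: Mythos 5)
Your proposal follows essentially the same route as the paper: compactness of $\{\bar v_\sigma\}$ and $\{\bar T_\sigma\}$ plus a recovery sequence built from $I_\sigma v_*$ with a Slater-condition time perturbation for part $(i)$, and for part $(ii)$ the transfer of the discrete optimum to a continuous competitor at time $\bar T_\sigma + O(\eta_\sigma)$ so that the quadratic growth \eqref{m00} yields \eqref{es1}--\eqref{es2}, with \eqref{es3}--\eqref{es4} then read off from Lemma \ref{Lemma-stateEq-dd}. The only difference is presentational: the paper delegates the recovery and perturbation constructions to cited results of Bonifacius--Pieper--Vexler, whereas you sketch them explicitly; the substance is the same.
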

\begin{proof}
    $(i)$ \  The assumptions $(H1), (H2)$ are valid and $\Phi''_+$ is nonempty, by  Proposition \ref{E3}, the problem ${\rm inf}_{(T,  \zeta, v) \in \Phi''_+} T$ has at least one globally optimal solution $(T_*, \zeta_*, v_*) \in \Phi''_+$. By similar arguments as \cite[Proposition 4.4]{Bon1}, there exist $\tau_1, h_1 > 0$ and a sequence $\{(\widetilde T_\sigma, \widetilde \zeta_\sigma, \widetilde v_\sigma)\}$ with $(\widetilde T_\sigma, \widetilde \zeta_\sigma, \widetilde v_\sigma) \in \Phi''_\sigma$ for $\tau \le \tau_1$, $h \le h_1$ and $\widetilde T_\sigma \to T_*$ as $\sigma \to 0$. This fact implies that there exists $m > 0$ such that $0 < \bar T_\sigma \le \widetilde T_\sigma \le m$ for $\tau, h$ small enough. Hence, there exists $\{\widehat T_\sigma\} \subseteq \{\bar T_\sigma\}$ such that $\widehat T_\sigma \to \widehat T \ge 0$. It's easy to see that $\{\bar v_\sigma\}$ is bounded in $L^p(Q_1)$.  By the reflexivity of $L^p(Q_1)$, we can extract a subsequence $\{\widehat v_\sigma\} \subseteq\{\bar v_\sigma\}$ such that $\widehat v_\sigma \rightharpoonup  \widehat v$ in $L^p(Q_1)$. By weak closeness in $L^p(Q_1)$ of the set $K_p$ defined as (\ref{Kp}), we have 
    \begin{align}
    a \le \widehat v(x, s) \le b  \ {\rm a.a.} \  (x, s) \in Q_1.
    \end{align}
    Also, $\zeta(\widehat T_\sigma, \widehat v_\sigma)   -   \zeta(\widehat T, \widehat v)$ is solution of 
     \begin{align}
        &\dfrac{\partial(\zeta(\widehat T_\sigma, \widehat v_\sigma)   -   \zeta(\widehat T, \widehat v))}{\partial s}  + \widehat T_\sigma A(\zeta(\widehat T_\sigma, \widehat v_\sigma)   -   \zeta(\widehat T, \widehat v)) \nonumber \\
        &+ \widehat T_\sigma\psi'(\zeta(\widehat T, \widehat v) + \theta (\zeta(\widehat T_\sigma, \widehat v_\sigma)   -   \zeta(\widehat T, \widehat v)))(\zeta(\widehat T_\sigma, \widehat v_\sigma)   -   \zeta(\widehat T, \widehat v))  \nonumber \\
        &=       \widehat T_\sigma(\widehat v_\sigma - \widehat v) + (\widehat T_\sigma - \widehat T)[\widehat v - A\zeta(\widehat T, \widehat v) - \psi(\zeta(\widehat T, \widehat v))],  \nonumber \\
        &(\zeta(\widehat T_\sigma, \widehat v_\sigma)   -   \zeta(\widehat T, \widehat v))(0)  = 0,   
    \end{align}
    for some $\theta \in [0, 1]$.  By the same argument as in proof of the convergence property in \cite[Theorem 2.1]{Casas-2020}, we get 
    \begin{align}
        \|\zeta(\widehat T_\sigma, \widehat v_\sigma)   -   \zeta(\widehat T, \widehat v)\|_{L^\infty(Q_1)}  \to 0 \quad {\rm as} \ \sigma \to 0.
    \end{align}
    By  (\ref{KeyInq22})  of  Lemma \ref{Lemma-stateEq-dd}, we deduce that 
    \begin{align}
         \|\zeta_\sigma(\widehat T_\sigma, \widehat v_\sigma)   -  \zeta(\widehat T_\sigma, \widehat v_\sigma)\|_{L^\infty(Q_1)}  \to 0 \quad {\rm as} \ \sigma \to 0.
    \end{align}
    From these facts, we have 
     \begin{align}
         \|\zeta_\sigma(\widehat T_\sigma, \widehat v_\sigma)   &-  \zeta(\widehat T, \widehat v)\|_{L^\infty(Q_1)}  \nonumber \\ 
         &\le   \|\zeta_\sigma(\widehat T_\sigma, \widehat v_\sigma)   -  \zeta(\widehat T_\sigma, \widehat v_\sigma)\|_{L^\infty(Q_1)} +   \|\zeta(\widehat T_\sigma, \widehat v_\sigma)   -   \zeta(\widehat T, \widehat v)\|_{L^\infty(Q_1)}     \to 0 \quad {\rm as} \ \sigma \to 0.
     \end{align}
  This also implies that $\widehat \zeta_\sigma(1) \to \widehat \zeta(1)$ in $L^\infty(\Omega)$, where $\widehat \zeta_\sigma =  \zeta_\sigma(\widehat T_\sigma, \widehat v_\sigma)$ and $\widehat \zeta = \zeta(\widehat T, \widehat v)$. Hence
  \begin{align}
      H(\widehat \zeta(1))  =  {\rm lim}_{\sigma \to 0} H(\widehat \zeta_\sigma (1)) \le 0. 
  \end{align}
 If $\widehat T = 0$ then we have $\widehat \zeta(x, s) = y_0(x)$ for all $(x, s) \in Q_1$. Hence, $\widehat \zeta(1) = y_0$. Then, we have $H(y_0) = H(\widehat \zeta(1)) \le 0$,  which contradicts $(H2)$. So we must have $\widehat T > 0$. \\
 In a summary, we have showed that there is a subsequence $\{(\widehat T_\sigma, \widehat \zeta_\sigma, \widehat v_\sigma)\}$ of $\{(\bar T_\sigma, \bar \zeta_\sigma, \bar v_\sigma)\}$ such that $\widehat T_\sigma \to \widehat T$ in $\mathbb R$, $\widehat \zeta_\sigma \to \widehat \zeta$ in $L^\infty(Q_1)$ and $\widehat v_\sigma \rightharpoonup  \widehat v$ in $L^p(Q_1)$, where $(\widehat T, \widehat \zeta, \widehat v) \in \Phi''_+$. 
  Moreover, we have 
  \begin{align}
      \widehat T = {\rm lim}_{\sigma \to 0} \widehat T_\sigma \le {\rm lim}_{\sigma \to 0} \widetilde T_\sigma = T_*.
  \end{align}
  Thus $(i)$ is proven.

  $(ii)$ \ Assume that there is $(T_*, \zeta'_*, v'_*) \in \Phi''_+$.  By the condition (\ref{m00}) in Theorem \ref{SOSC-P''}, we have $v'_* = v_*$. It follows from uniqueness of solution of the state equation (\ref{P2.1})-(\ref{P3.1}) that $\zeta'_* = \zeta_*$. Hence, the globally optimal solution $(T_*, \zeta_*, v_*) \in \Phi''_+$ of the problem ${\rm inf}_{(T,  \zeta, v) \in \Phi''_+} T$ is uniquely determined. By a contradiction
argument, we can show that $\bar T_\sigma \to T_*$ in $\mathbb R$, $\bar \zeta_\sigma \to \zeta_*$ in $L^\infty(Q_1)$ and $\bar v_\sigma \rightharpoonup  v_*$ in $L^p(Q_1)$.

Next we claim that for every $1 \le s < +\infty$, $\bar v_\sigma \to  v_*$ in $L^s(Q_1)$.  By similar arguments as \cite[Proposition 4.8]{Bon1},  there exists a sequence $\{T'_\sigma, \zeta', \bar v_\sigma\} \in \Phi''_+$ such that 
\begin{align}
    |T'_\sigma  -  \bar T_\sigma|  \le m_1 |{\rm log} \tau|(\tau + h^2), 
\end{align}
for sufficiently small $\tau$ and $h$ and for some constant  $m_1 > 0$ independent of $\tau$ and $h$. It follows that
\begin{align}
\label{pp1}
    |T'_\sigma  - T_*| \le |T'_\sigma  -  \bar T_\sigma| + |\bar T_\sigma - T_*| \le m_1 |{\rm log} \tau|(\tau + h^2) + |\bar T_\sigma - T_*|  \to 0 \quad {\rm as} \ \sigma \to 0. 
\end{align}
Hence, we may use the growth condition (\ref{m00}) from Theorem \ref{SOSC-P''} to deduce that 
\begin{align}
    \|\bar v_\sigma -  v_*\|_{L^2(Q_1)} \to 0 \quad {\rm as} \ \sigma \to 0.
\end{align}
With  $s \in [1, 2]$, we have $\|\bar v_\sigma -  v_*\|_{L^s(Q_1)}  \le |Q_1|^{\frac{1}{s} - \frac{1}{2}} \|\bar v_\sigma -  v_*\|_{L^2(Q_1)} \to 0$ as $\sigma \to 0$. For $s > 2$, we have
\begin{align}
\label{pp2}
    \|\bar v_\sigma -  v_*\|^s_{L^s(Q_1)}  = \int_{Q_1} |\bar v_\sigma -  v_*|^{s - 2} |\bar v_\sigma -  v_*|^2 dxds \le (b - a)^{s - 2} \|\bar v_\sigma -  v_*\|^2_{L^2(Q_1)} \to 0 \quad {\rm as} \ \sigma \to 0.
\end{align}
So, the claim is justified.

Next we show that estimates (\ref{es1})-(\ref{es4}) are valid. Since $(T_*, \zeta_*, v_*)$ is  optimal solution of the problem ${\rm inf}_{(T,  \zeta, v) \in \Phi''_+} T$, by Proposition \ref{Pro-KKT-P''}, we have  $v_* \in H^1(Q_1)$. From this fact and Lemma \ref{Proj}, 
\begin{align}
        \|I_\sigma v_* - v_*\|_{L^2(Q_1)}  \le m_2(\tau + h),   
    \end{align}
    for some $m_2 > 0$ independent of $\tau$ and $h$. Combining this with similar arguments as \cite[Proposition 4.9]{Bon1}, there exist $\tau_0, h_0 > 0$ such that 
    \begin{align}
        |\bar T_\sigma - T_*| \le c_1 \big(\tau + h  +  |{\rm log} \tau|(\tau + h^2)\big), 
    \end{align}
 for every $\tau \le \tau_0$, $h \le h_0$ and positive constant $c_1 > 0$ independent of $\tau$ and $h$. Hence, the estimate (\ref{es1}) is valid. Moreover, it follows from (\ref{pp1}) that 
 \begin{align}
     |T'_\sigma  - T_*| \le m_3 \big(\tau + h  +  |{\rm log} \tau|(\tau + h^2)\big), 
 \end{align}
 for some $m_3 >  0$, $\tau \le \tau_0$ and $h \le h_0$.  Once again, using the growth condition (\ref{m00}) we get 
 \begin{align}
 \label{pp3}
     \|\bar v_\sigma -  v_*\|_{L^2(Q_1)} \le m_4 \big(\tau + h  +  |{\rm log} \tau|(\tau + h^2)\big)^{\frac{1}{2}},
 \end{align}
for some constants $m_4 >  0$, $\tau \le \tau_0$ and $h \le h_0$.  Also, it follows from (\ref{pp2}) and (\ref{pp3}) that 
\begin{align}
    \|\bar v_\sigma -  v_*\|_{L^s(Q_1)} \le m_5 \big(\tau + h  +  |{\rm log} \tau|(\tau + h^2)\big)^{\frac{1}{s}} \quad \forall s > 2, 
\end{align}
for some constants $m_5 >  0$, $\tau \le \tau_0$ and $h \le h_0$.  Hence, the estimate (\ref{es2}) is valid.  By (\ref{KeyInq11}), (\ref{es1}) and (\ref{es2}), we obtain (\ref{es3}).  By (\ref{KeyInq22}), (\ref{es1}) and (\ref{es2}), we obtain (\ref{es4}).   Therefore, the proof of theorem is complete.
\end{proof}

%\newpage

\begin{corollary}  
    Let the assumptions of  Theorem \ref{main-result}-(ii) hold and $(T_*, \zeta_*, v_*)$ is globally optimal solution of the problem ${\rm inf}_{(T,  \zeta, v) \in \Phi''_+} T$.  Let
    \begin{align*}
        &y_*(x, t) := \zeta_*(x, \frac{t}{T_*}), \quad q_*(x, t) := v_*(x, \frac{t}{T_*}) - \alpha \zeta_*(x, \frac{t}{T_*}), \quad (x, t) \in Q_{T_*}, \\
        &\bar y_\sigma (x, t) := \bar \zeta_\sigma (x, \frac{t}{\bar T_\sigma}), \quad (x, t) \in Q_{\bar T_\sigma}, 
    \end{align*}
    where $Q_{\bar T_\sigma} =  \Omega \times (0, \bar T_\sigma)$. Then $(T_*, y_*, q_*)$ is globally optimal solution of the problem $(P)$ and
        \begin{align}
            &\|\bar y_\sigma - y_{*e}\|_{L^\infty(Q_{\bar T_\sigma})} \to   0   \quad {\rm as}\ \sigma \to 0.  \label{es41} 
        \end{align} 
\end{corollary}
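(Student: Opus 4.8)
The plan is to split the argument into two independent parts: first, showing that $(T_*, y_*, q_*)$ is a globally optimal solution of $(P)$; and second, establishing the uniform convergence \eqref{es41}. The first part is purely structural, tracing the bijective correspondences between the feasible sets $\Phi$, $\Phi'$, $\Phi''$ set up in Propositions \ref{E1} and \ref{E2}, all of which preserve the objective value $T$. The second part is an analytic estimate combining the $L^\infty$-convergence $\bar\zeta_\sigma \to \zeta_*$ supplied by Theorem \ref{main-result}-(ii) with the uniform continuity of the extension $y_{*e}$.

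For the global optimality, I would argue by contradiction. Setting $u_* := v_*(\cdot,\cdot/T_*)$, Proposition \ref{E2}-(ii) gives $(T_*, y_*, u_*) \in \Phi'$, and then $q_* = u_* - \alpha y_*$ yields $(T_*, y_*, q_*) \in \Phi$ as in Proposition \ref{E1}. Suppose this were not globally optimal, so that some $(T, y, q) \in \Phi$ has $T < T_*$. Reversing the two transformations — setting $u := q + \alpha y$ and rescaling time by $\zeta(x,s) := y(x, Ts)$, $v(x,s) := u(x, Ts)$ — produces a feasible triple $(T, \zeta, v) \in \Phi''$ with $T > 0$, hence $(T,\zeta,v) \in \Phi''_+$. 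Since $T < T_*$, this contradicts the fact that $T_* = \inf_{(T,\zeta,v)\in\Phi''_+} T$ is attained at $(T_*,\zeta_*,v_*)$. The only point to verify is that each transformation maps feasible points to feasible points, which is precisely the content of the constraint computations already carried out in the proofs of Propositions \ref{E1} and \ref{E2}.

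For the convergence \eqref{es41}, fix $(x,t) \in Q_{\bar T_\sigma}$ and put $s := t/\bar T_\sigma \in [0,1]$, so that $\bar y_\sigma(x,t) = \bar\zeta_\sigma(x,s)$. Inserting the intermediate term $\zeta_*(x,s)$ gives
\begin{align*}
|\bar y_\sigma(x,t) - y_{*e}(x,t)| \le |\bar\zeta_\sigma(x,s) - \zeta_*(x,s)| + |\zeta_*(x,s) - y_{*e}(x,t)|.
\end{align*}
The first term is bounded by $\|\bar\zeta_\sigma - \zeta_*\|_{L^\infty(Q_1)} \to 0$ from Theorem \ref{main-result}-(ii). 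For the second, I would use the identity $\zeta_*(x,\tau) = y_{*e}(x, T_*\tau)$ for $\tau \in [0,1]$ to rewrite it as $|y_{*e}(x, T_* t/\bar T_\sigma) - y_{*e}(x,t)|$. Both time arguments lie in $[0, T_0]$ for any fixed $T_0 > T_*$ once $\sigma$ is small (since $\bar T_\sigma \to T_*$), and their difference obeys $|T_* t/\bar T_\sigma - t| = (t/\bar T_\sigma)|T_* - \bar T_\sigma| \le |T_* - \bar T_\sigma| \to 0$. Invoking the uniform continuity of $y_{*e}$ on the compact set $\bar\Omega \times [0, T_0]$ then forces this term to $0$ uniformly in $(x,t)$, and taking the supremum over $Q_{\bar T_\sigma}$ yields \eqref{es41}.

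I expect the second part to be the only place requiring genuine care: one must keep the rescaled time argument $T_* t/\bar T_\sigma$ inside the interval where $y_{*e}$ is uniformly continuous and control the shift $|T_* t/\bar T_\sigma - t|$ uniformly over $t \in [0, \bar T_\sigma]$. Since this is exactly the mechanism already exploited in Propositions \ref{E1} and \ref{E2}, no new difficulty should arise, and the global-optimality part is essentially bookkeeping on the already-established correspondences.
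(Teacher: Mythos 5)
Your proposal is correct and follows essentially the same route as the paper: the triangle inequality with the intermediate term $\zeta_*(x,t/\bar T_\sigma)$, the $L^\infty$-bound from Theorem \ref{main-result}-(ii) for the first piece, and uniform continuity plus $\bar T_\sigma\to T_*$ for the second. The only (harmless) differences are that you treat the second piece in one stroke via the identity $\zeta_*(x,s)=y_{*e}(x,T_*s)$ and uniform continuity of $y_{*e}$ on $\bar\Omega\times[0,T_0]$, where the paper instead splits into the cases $\bar T_\sigma\le T_*$ and $\bar T_\sigma>T_*$ and uses uniform continuity of $\zeta_*$ on $\bar\Omega\times[0,1]$, and that you spell out the global-optimality transfer through $\Phi''\to\Phi'\to\Phi$, which the paper leaves implicit.
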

\begin{proof}
    Notice that $\frac{t}{\bar T_\sigma} \in (0, 1)$ for all $t \in (0, \bar T_\sigma)$. By (\ref{es4}) in Theorem \ref{main-result},  for each $(x, t) \in Q_{\bar T_\sigma}$, 
    \begin{align}
        |\bar y_\sigma (x, t) - \zeta_*(x, \frac{t}{\bar T_\sigma})|  =  |\bar \zeta_\sigma (x, \frac{t}{\bar T_\sigma}) - \zeta_*(x, \frac{t}{\bar T_\sigma})|  \le \|\bar \zeta_\sigma - \zeta_*\|_{L^\infty(Q_1)}  \to   0   \quad {\rm as}\ \sigma \to 0,    
    \end{align}
    which implies that 
    \begin{align}
        \|\bar y_\sigma (x, t) - \zeta_*(x, \frac{t}{\bar T_\sigma})\|_{L^\infty(Q_{\bar T_\sigma})} \to   0   \quad {\rm as}\ \sigma \to 0. \label{tt1}
    \end{align}
    Take any $\varepsilon > 0$, we consider two cases:\\
    $\bullet$ If $\bar T_\sigma \le T_*$ then for each $(x, t) \in Q_{\bar T_\sigma}$,  we have 
    \begin{align}
        |\zeta_*(x, \frac{t}{\bar T_\sigma})  -  y_{*e}(x, t)|  = |\zeta_*(x, \frac{t}{\bar T_\sigma})  -  y_{*}(x, t)| 
        = |\zeta_*(x, \frac{t}{\bar T_\sigma})  -  \zeta_*(x, \frac{t}{T_*})|.
    \end{align}
    Since $\frac{t}{\bar T_\sigma} \to \frac{t}{T_*}$ as $\sigma \to 0$ and the uniform continuity of $\zeta_*$ on $\bar \Omega \times [0, 1]$, there exist $\tau', h' > 0$ such that    
     \begin{align}
        &|\zeta_*(x, \frac{t}{\bar T_\sigma})  -  y_{*e}(x, t)|          = |\zeta_*(x, \frac{t}{\bar T_\sigma})  -  \zeta_*(x, \frac{t}{T_*})| < \varepsilon,   \label{tt2}
    \end{align}
    for each $(x, t) \in Q_{\bar T_\sigma}$ with $\tau \le \tau'$ and $h \le h'$.\\
    $\bullet$ If $\bar T_\sigma > T_*$ and  $(x, t) \in \bar \Omega \times [T_*, \bar T_\sigma]$,  we have 
    \begin{align}
        |\zeta_*(x, \frac{t}{\bar T_\sigma})  -  y_{*e}(x, t)|  = |\zeta_*(x, \frac{t}{\bar T_\sigma})  -  y_{*}(x, T_*)| 
        = |\zeta_*(x, \frac{t}{\bar T_\sigma})  -  \zeta_*(x, 1)|.
    \end{align}
    Also $ \frac{T_*}{\bar T_\sigma} \le \frac{t}{\bar T_\sigma} \le 1$ for $t \in [T_*, \bar T_\sigma]$. Hence, $\frac{t}{\bar T_\sigma} \to 1$ as $\sigma \to 0$. From this and the uniform continuity of $\zeta_*$ on $\bar \Omega \times [0, 1]$, there exist $\tau'' \in (0, \tau')$ and $h'' \in (0, h')$ such that  
      \begin{align}
        |\zeta_*(x, \frac{t}{\bar T_\sigma})  -  y_{*e}(x, t)|   
         < \varepsilon,  \label{tt3}
    \end{align}
    for each $(x, t) \in Q_{\bar T_\sigma}$ with $\tau \le \tau''$ and $h \le h''$.     From (\ref{tt2}) and (\ref{tt3}), we deduce that 
    \begin{align}
        \|\zeta_*(x, \frac{t}{\bar T_\sigma})  -  y_{*e}(x, t)\|_{L^\infty(Q_{\bar T_\sigma})} \to   0   \quad {\rm as}\ \sigma \to 0. \label{tt4}
    \end{align}
    Combining (\ref{tt1}) and (\ref{tt4}), we have 
    \begin{align}
        \|\bar y_\sigma - y_{*e}\|_{L^\infty(Q_{\bar T_\sigma})} \le  \|\bar y_\sigma (x, t) - \zeta_*(x, \frac{t}{\bar T_\sigma})\|_{L^\infty(Q_{\bar T_\sigma})} + \|\zeta_*(x, \frac{t}{\bar T_\sigma})  -  y_{*e}(x, t)\|_{L^\infty(Q_{\bar T_\sigma})} \to 0
    \end{align}
    as $\sigma \to 0$. The proof of the corollary is complete.
\end{proof}

\subsection{Numerical example}

This subsection is dedicated to a numerical example in which we compute approximate solutions to a time-optimal control problem. 

\begin{example} {\rm  
Let $\Omega = (0, 1)^2 = \{(x_1, x_2) : 0 <  x_1, x_2 < 1\}$ and 
\begin{align*}
    &y_0(x_1, x_2) = 27 x_1(1 - x_1)\sin(\pi x_2), \quad     y_\Omega(x_1, x_2) = 0, \\
    &\lambda = 0.1,  \quad     H(y(T)) = \frac{1}{2}\int_\Omega y^2(T) dx  -   \frac{\lambda^2}{2}. 
\end{align*}
Consider the problem
\begin{align}
\label{EX1}
    {\rm {Minimize}} \ T \quad {\rm {subject  \ to}} \quad 
    \begin{cases}
        (T, y, w) \in (0, +\infty)  \times \big(C(\bar Q_T)\cap W^{1, 1}_2(0, T; D, H)\big) \times L^\infty(Q_T),  \\
        \frac{\partial y}{\partial t}  -   \Delta y   = w \quad \text{in} \ Q_T, \quad y(x, t)=0 \quad \text{on}  \ \Sigma_T,  \\
        y(0)=y_0  \quad \text{in} \ \Omega,   \\
        H(y(T))  \le  0,  \\
        - 1  \le w(x, t) + \dfrac{1}{5} y(x, t) \le 3 \quad \text{a.a.} \ (x, t)\in Q_T.  
    \end{cases}
\end{align}
It is easy to check that the problem satisfies assumptions $(H1)$-$(H2)$ and so it has at least one globally optimal solution. By changing control variable $u  = w + \dfrac{1}{5} y$, the problem (\ref{EX1}) can be transformed to time-optimal control problem with pure control constraint:
\begin{align}
\label{EX2}
    {\rm {Minimize}} \ T \quad {\rm {subject  \ to}} \quad 
    \begin{cases}
        \frac{\partial y}{\partial t}  +  Ay   =  u \quad \text{in} \ Q_T, \quad y(x, t)=0 \quad \text{on}  \ \Sigma_T,  \\
        y(0)=y_0  \quad \text{in} \ \Omega,   \\
        H(y(T))  \le  0,  \\
        - 1  \le u(x, t)\le 3 \quad \text{a.a.} \ (x, t)\in Q_T,   
    \end{cases}
\end{align}
where $Ay = - \Delta y + \frac{1}{5}y$. Based on the equivalence of time- and distance-optimal control problems (see \cite{Bon2}), finding an approximate solution to (\ref{EX2}) leads to a bi-level optimization problem:  in the outer loop, the optimal time is determined by a Newton method as the solution of the optimal value function (see Figure 3.1 for our example); and with each given $T$, we determine a control such that the associated state has a minimal distance to the target set. For further details we refer to \cite[Algorithm 1]{Bon2}.
{\begin{center}
\begin{tikzpicture}
\small
\begin{axis}[%
width=2.521in,
height=2.906in,
at={(0.758in,0.481in)},
scale only axis,
xmin=0,
xmax=0.4,
xlabel style={font=\color{white!15!black}},
xlabel={$T$},
ymin=-0.5,
ymax=3.5,
ylabel style={font=\color{white!15!black}},
ylabel={$\delta(T)$},
axis background/.style={fill=white}
]
\addplot [color=white!30!black, dotted, line width=1.3pt, forget plot]
  table[row sep=crcr]{%
0.002	0\\
0.4	0\\
};
\addplot [color=mycolor1, line width=1.3pt, forget plot]
  table[row sep=crcr]{%
0.002	3.24249915737058\\
0.0122051282051282	2.6177635825842\\
0.0224102564102564	2.10944139792288\\
0.0326153846153846	1.69514685594555\\
0.0428205128205128	1.35734656403721\\
0.053025641025641	1.08186648765007\\
0.0632307692307692	0.857178340449307\\
0.0734358974358974	0.673893676838214\\
0.0836410256410256	0.524364076492223\\
0.0938461538461539	0.402357646612884\\
0.104051282051282	0.302796031143306\\
0.11425641025641	0.221540403970732\\
0.124461538461538	0.155217285413091\\
0.134666666666667	0.101076768230246\\
0.144871794871795	0.0568771398608765\\
0.155076923076923	0.0207910385131232\\
0.165282051282051	-0.00867074216396987\\
0.175487179487179	-0.0327207665963598\\
0.185692307692308	-0.052343576650874\\
0.195897435897436	-0.0683303772286512\\
0.206102564102564	-0.0812884634480287\\
0.216307692307692	-0.0915529726580108\\
0.226512820512821	-0.0986853514249463\\
0.236717948717949	-0.0999919095323038\\
0.246923076923077	-0.0999915851378888\\
0.257128205128205	-0.0999934107665557\\
0.267333333333333	-0.0999910972912562\\
0.277538461538462	-0.0999928014817281\\
0.28774358974359	-0.0999941641426056\\
0.297948717948718	-0.0999951754932994\\
0.308153846153846	-0.0999946256730745\\
0.318358974358974	-0.0999936144387468\\
0.328564102564103	-0.0999946212322081\\
0.338769230769231	-0.0999957014335574\\
0.348974358974359	-0.0999964724242012\\
0.359179487179487	-0.0999971117370307\\
0.369384615384615	-0.0999975992316652\\
0.379589743589744	-0.0999980373985562\\
0.389794871794872	-0.0999971103641475\\
0.4	-0.0999976270115861\\
};
\end{axis}
\end{tikzpicture}\\
{\footnotesize {\textsc{Figure 3.1}. The minimum distance value function $\delta(T)$ associated with the problem (\ref{EX2}).}}
\end{center}}

\begin{table}[ht]
%\caption
\centering
\small
\begin{tabular}{cccccccl}
\toprule
$M$ & $N$ & $T_\sigma$ & $|T_\sigma - T_{\sigma_*}|$ & $EOC$ & Newton steps & D-steps & cG steps \\
\toprule 
%\midrule
20 & 4225 & 0.175221 & $1.3570 \times 10^{-2}$ & $--$ & 5 & 0 & 2 \\
40 & 4225 & 0.168169 & $6.5187 \times 10^{-3}$ & 1.06 & 5 & 0 & 2 \\
80 & 4225 & 0.164784 & $3.1333 \times 10^{-3}$ & 1.06 & 5 & 0 & 2 \\
160 & 4225 & 0.163125 & $1.4745 \times 10^{-3}$ & 1.09 & 5 & 0 & 2 \\
320 & 4225 & 0.162304 & $6.5345 \times 10^{-4}$ & 1.17 & 5 & 0 & 2 \\
640 & 4225 & 0.161896 & $2.4499 \times 10^{-4}$ & 1.42 & 5 & 0 & 2 \\
\midrule
1280 & 25 & 0.144915 & $1.6736 \times 10^{-2}$ & $--$ & 5 & 0 & 2 \\
1280 & 81 & 0.156844 & $4.8064 \times 10^{-3}$ & 1.80 & 5 & 0 & 2 \\
1280 & 289 & 0.160483 & $1.1681 \times 10^{-3}$ & 2.04 & 5 & 0 & 2 \\
1280 & 1089 & 0.161447 & $2.0385 \times 10^{-4}$ & 2.52 & 5 & 0 & 2 \\
1280 & 4225 & 0.161692 & $4.1267 \times 10^{-5}$ & 2.31 & 5 & 0 & 2 \\
\bottomrule
\end{tabular}
\label{tab:convergence}

\vspace{0.9em}

\begin{minipage}{\textwidth}
{\footnotesize {\noindent {\sc Table} 3.1. Results of the convergence test for Example 3.1, showing computed optimal times, absolute errors, Experimental Orders of Convergence (EOC) in space and time, number of Newton steps in the outer loop, and iteration steps used in the numerical method, with $M$ denoting the number of time steps and $N$ the number of nodes for the spatial discretization. The initial value for the Newton method is $T_0 = 0.1$.}}
\end{minipage}
\end{table}

To discretize the problem, we use meshes that are uniform in both time and space, with size $\sigma = (\tau, h)$. The state and adjoint state equations are discretized by means of piecewise constant functions in time (corresponding to the implicit Euler method) and continuous and cellwise linear functions in space. Since we do not know the exact optimal time, we calculate a numerical reference time on an additionally refined grid with the number of time steps $M = 2560$ and the number of nodes for the spatial discretization $N = 16641$. The optimal time we obtain numerically is approximately $T_{\rm opt} \approx T_{\sigma_*} = 0.1616506$.

We perform two experiments: In the first one, we fix the number of nodes for the spatial discretization $N = 4225$ and we measure the error $|T_\sigma - T_{\sigma_*}|$ as the number of time steps $M$ increase. The Experimental Orders of Convergence in time are defined as follows:
\begin{align*}
    \frac{\log (|T_{\sigma_{i+1}} - T_{\sigma_*}|) -  \log (|T_{\sigma_{i}} - T_{\sigma_*}|)}{\log \tau_{i + 1} - \log \tau_i},
\end{align*}
where $\tau_i = \frac{1}{M_i}$.  
In the second one, we fix $M = 1280$ and measure the error $|T_\sigma - T_{\sigma_*}|$ as the space mesh size varies. The Experimental Orders of Convergence in space are computed by
\begin{align*}
    \frac{\log (|T_{\sigma_{i+1}} - T_{\sigma_*}|) -  \log (|T_{\sigma_{i}} - T_{\sigma_*}|)}{\log h_{i + 1} - \log h_i},
\end{align*}
where $h_i = \frac{\sqrt{2}}{\sqrt{N_i} - 1}$.  
All experiments were computed using MATLAB R2024b on Windows 11 (64-bit), 13th Gen Intel(R) Core(TM) i7-1355U 1.70 GHz, 16.0 GB RAM.  The results are shown in Table 3.1. We can see that the EOC in time is ${\mathcal O}(\tau)$, while the EOC in space is ${\mathcal O}(h^2)$, which are quite in agreement, up to logarithmic terms, with the theoretical result given in Theorem~\ref{main-result}.
}    
\end{example}

\medskip

\noindent {\bf Acknowledgments.}   This research was supported by Vietnam Academy of Science and Technology under grant number CTTH00.01/25-26.

\end{document}